\newcommand{\defEnv}[2]{%
    \expandafter\newcommand\csname #1\endcsname{#2}%
}
\newcommand{\useDefEnv}[2]{%
    \begin{#1}%
    \label{#2}%
    \csname #2\endcsname%
    \end{#1}%
}
\newcommand{\autoEnv}[3]{%
    \expandafter\newcommand\csname #2\endcsname{#3}%
    \begin{#1}%
    \label{#2}%
    #3%
    \end{#1}%
}
\newcommand{\refEnv}[1]{%
    \textbf{\cref{#1}. }\textit{\csname #1\endcsname}%
}
\title{CaCuTe: Casual Cubic-Model Technique for Faster Optimization}
\date{} 					
\author{
    \hspace{1mm}Nazarii Tupitsa
    \\
	Department of Machine Learning\\
	MBZUAI\\
}
\Crefname{assumption}{Assumption}{Assumptions}
\theoremstyle{plain}
\newtheorem{theorem}{Theorem}[section]
\newtheorem{lemma}[theorem]{Lemma}
\newtheorem{corollary}[theorem]{Corollary}
\theoremstyle{definition}
\newtheorem{assumption}[theorem]{Assumption}
\newtheorem{example}[theorem]{Example}
\theoremstyle{remark}
\newtheorem{remark}[theorem]{Remark}
\definecolor{niceblue}{rgb}{0.0,0.19,0.56}
\newcommand{\eqdef}{\stackrel{\text{def}}{=}}
\newcommand{\algname}[1]{{\sf  #1}\xspace}
\DeclareMathOperator*{\argmin}{arg\!min}
\newcommand{\E}{\mathbb{E}}
\newcommand{\R}{\mathbb{R}}
\newcommand{\cO}{{\cal O}}
\newcommand{\mA}{{\bf A}}
\newcommand{\mI}{{\bf I}}
\newcommand\swapifbranches[3]{#1{#3}{#2}}
\patchcmd{\DeclarePairedDelimiter}{\@ifstar}{\swapifbranches\@ifstar}{}{}
\DeclarePairedDelimiterX{\inp}[2]{\langle}{\rangle}{#1, #2}
\DeclarePairedDelimiterX{\abs}[1]{\lvert}{\rvert}{#1}
\DeclarePairedDelimiterX{\roundup}[1]{\lceil}{\rceil}{#1}
\DeclarePairedDelimiterX{\norm}[1]{\lVert}{\rVert}{#1}
\DeclarePairedDelimiterX{\cbr}[1]{\{}{\}}{#1} 
\DeclarePairedDelimiterX{\rbr}[1]{(}{)}{#1} 
\DeclarePairedDelimiterX{\sbr}[1]{[}{]}{#1} %
\DeclarePairedDelimiterXPP{\PositivePart}[1]{}{\lbrack}{\rbrack}{_+}{#1}
\DeclarePairedDelimiterX{\ClosedClosedInterval}[2]{\lbrack}{\rbrack}{#1, #2}
\DeclarePairedDelimiterX{\OpenOpenInterval}[2]{\lparen}{\rparen}{#1, #2}
\DeclarePairedDelimiterX{\ClosedOpenInterval}[2]{\lbrack}{\rparen}{#1, #2}
\DeclarePairedDelimiterXPP{\DualNorm}[1]{}{\lVert}{\rVert}{_*}{#1}
\DeclarePairedDelimiterX{\OpenClosedInterval}[2]{\lparen}{\rbrack}{#1, #2}
\DeclarePairedDelimiterX{\SetBuilder}[2]{\lbrace}{\rbrace}{#1 : #2}
\DeclarePairedDelimiterX{\DualPairing}[2]{\langle}{\rangle}{#1, #2}
\newcommand{\f}{f}
\newcommand{\g}{g}
\newcommand{\x}{x}
\begin{document}

\maketitle

    



\begin{abstract}
We establish a local $\mathcal{O}(k^{-2})$ rate for the gradient update $x^{k+1}=x^k-\nabla f(x^k)/\sqrt{H\|\nabla f(x^k)\|}$ under a $2H$-Hessian--Lipschitz assumption. Regime detection relies on Hessian--vector products, avoiding Hessian formation or factorization.
Incorporating this certificate into cubic-regularized Newton (CRN) and an accelerated variant enables per-iterate switching between the cubic and gradient steps while preserving CRN’s global guarantees. The technique achieves the lowest wall-clock time among compared baselines in our experiments.
In the first-order setting, the technique yields a monotone, adaptive, parameter-free method that inherits the local $\mathcal{O}(k^{-2})$ rate. Despite backtracking, the method shows superior wall-clock performance. Additionally, we cover smoothness relaxations beyond classical gradient--Lipschitzness, enabling tighter bounds, including global $\mathcal{O}(k^{-2})$ rates. 
Finally, we generalize the technique to the stochastic setting.
\end{abstract}

\section{Introduction}

In this work, we are interested in solving the unconstrained minimization problem $
  \min_{x\in \R^d}  f(x) \label{eq:min_f} $
where $f\colon\R^d\to \R$ is a twice-differentiable function with Lipschitz Hessian.

\subsection{Motivation}

The cubic regularized Newton (\algname{CRN}) method~\cite{griewank1981modification} naturally follows from the Lipschitz--Hessian assumption \cite{nesterov2006cubic}. Its update can be written implicitly as
\begin{equation*}
    \label{cubic-opt}
    H\|x^{k+1}-x^k\| (x^{k+1} - x^k) = - \nabla f(x^k) - \nabla^2 f(x^k) (x^{k+1} - x^k)
\end{equation*}
where $H>0$ is a constant, $\mI$ is the identity matrix.
When the quadratic term is negligible, i.e.
\begin{equation*}
    H\|x^{k+1}-x^k\|^3 \gg \inp{\nabla^2 f(x^k)(x^{k+1}-x^k)}{x^{k+1}-x^k}
\end{equation*}
an approximate update formula reads as
\begin{equation*}
    H\|x^{k+1}-x^k\|^3 \approx -\inp{\nabla f(x^k)} {x^{k+1}-x^k}
    \quad \text{or} \quad 
    x^{k+1} \approx x^k - \frac{\nabla f(x^k)}{\sqrt{H\|\nabla f(x^k)\|}}
\end{equation*}

and raises a natural question:
\begin{gather*}
	\textbf{\textit{Can we provably avoid Hessian evaluation and matrix inversion while retaining an $\cO(k^{-2})$ rate?}}
\end{gather*}
This affirmative result shows that the convergence is achievable with a first-order method and naturally leads to the next question.
\begin{gather*}
	\textbf{\textit{How to utilize Hessian smoothness for the first order methods?}}
\end{gather*}
This question is central, since exact second-order steps are often infeasible in modern high-dimensional problems even when Hessian smoothness holds. Our approach leverages smoothness without forming or inverting Hessians. We also extend the framework to stochastic optimization, yielding practical algorithms for large-scale applications.

\subsection{Related works}
\textbf{Cubic Regularization.} Beyond advances in Quasi-Newton methods~\cite{scheinberg2016practical,ghanbari2018proximal,rodomanov2021greedy,rodomanov2021new,jin2022sharpened,berahas2022quasi,kamzolov2023cubic,jin2024exact,scieur2024adaptive,wang2024global}, global convergence guarantees in the convex enhances by cubic regularization~\cite{kamzolov2023cubic,scieur2024adaptive,wang2024global}. However, those methods result in implicit (explicit for \cite{mishchenko2023regularized}) update formula requiring additional line search in each iteration, involving matrix inversions. Our approach enables occasional skipping of these costly operations. We also obtain a result similar to \cite{agafonov2025simple}, which establishes global $\cO(k^{-1})$ rate while having $\cO(k^{-2})$ at the first iterates.

\textbf{Adaptive Gradient Methods.} Gradient descent is attractive for its simplicity and low per-iteration cost, the only hyperparameter is the step size: classical Lipschitz-based analyses pick a constant step to ensure monotone decrease, but this requires the (unknown) Lipschitz constant and is typically overly conservative in practice \cite{malitsky20adaptive}. Backtracking line search \cite{nocedal2006numerical}, which gives theoretical convergence guarantee, is a parameter-free adaptive alternative to constant step sizes. However its try-increase nature can be avoided while keeping adaptivity without any extra computations under convexity and Lipschitz gradients \cite{malitsky20adaptive,malitsky2024adaptive}. Alternative adaptive methods with Polyak-stepsize~\cite{polyak1987introduction}, requires a knowledge of a value of the objective's minimum.

\textbf{Smoothness.} Smoothness relaxations recently draw significant attention recently. One of such assumptions is \textit{$(L_0,L_1)$-smoothness} originally introduced by \cite{zhang2020why} for twice differentiable functions, and extended to the class of differentiable but not necessarily twice differentiable functions \cite{zhang2020improved,chen2023generalized}. We introduce the assumption that the Hessian eigenvalue in the gradient direction grow linearly with the \textit{square root of the optimality gap} instead \textit{gradient norm}. Another closely related work \cite{mishkin2024directional}  also study directional smoothness, but based on a quadratic model, in contrast to ours cubic one. Adaptive methods \cite{defazioLearningRateFreeLearningDAdaptation2023,mishchenko2024prodigy,ivgi2023dog,Khaled2023DoWGUA} avoid smoothness assumption in their analysis in the convex setting as well as any line search.

\textbf{HVP Methods.} To the best of our knowledge Hessian vector products are used only for Hessian approximation \cite{jahani2022doubly,yao2021adahessian,yao2018hessian, sen2023foplahd} i.e. based on \cite{bekas2007estimator}. Except the closest work \cite{smee2025first}, which studies similar idea for non-convex deterministic setup. Also, a similar step size appears in \cite{thomsen2024complexity}, which studies quadratic functions.

\subsection{Our Contribution}
\begin{itemize}[leftmargin=*]
    \item \textbf{Certificate for Cubic-Model Decrease.} We introduces an Hessian–vector product (HVP\footnote{The cost of computing Hessian–vector products is of the same order as that of gradients \cite{agarwal2016second}.})-based certificate to decide when the gradient step ($x^{k+1} = x^k - {\nabla f(x^k)}/{\sqrt{H_k\|\nabla f(x^k)\|}}$, where $H_k$ is a local estimate of the Hessian–Lipschitz constant) decrease (corresponds coincides aligns) with the cubic-model i.e. \algname{CRN} step decrease. Cubic-model decrease holds under \textit{vanishing directional curvature} (see \eqref{eq:accclass}) and yields $\cO(k^{-2})$ global rate for i.e logistic loss function.
    
    \item \textbf{\underline{Ca}sual \underline{Cu}bic \underline{N}ewton (\algname{CaCuN}).} The certificate enables provable obviating the second order work (Hessian computation and factorization) for \algname{CRN}~\cite{nesterov2006cubic} type methods retaining global $\cO(k^{-2})$ rate.
    
    \item \textbf{\underline{Acc}elerated \underline{Ca}sual \underline{Cu}bic \underline{N}ewton (\algname{AccCaCuN}).} Same argument applies for accelerated \algname{CRN} retaining \algname{AccCaCuN}'s global $\cO(k^{-3})$ rate. The method  obviates the second order work under\textit{ vanishing directional curvature}.
    

    \item \textbf{\underline{Ca}sual \underline{Cu}bic \underline{Ad}aptive \underline{G}radient \underline{D}escent (\algname{CaCuAdGD}).} The central contribution of our work addresses the infeasibility of directly applying second-order methods. Our technique exploits Hessian smoothness through backtracking on \textit{directional curvature} (see \eqref{eq:relaxed}), which requires only Hessian–vector products. When the cubic-model decrease does not apply, the method falls back to the standard quadratic-model decrease. The method is monotone, parameter-free and shows superior wall-clock performance in experiments compared to, e.g., \algname{AdGD}~\cite{malitsky20adaptive}.

    \item \textbf{Gradient-Directional Smoothness Relaxation.} We propose an assumption allowing the Hessian eigenvalue in the gradient direction to grow linearly with the square root of the optimality gap. It enables tighter convergence rates and, in particular, a global cubic-model decrease rate, e.g., for logistic regression under separability.
    
    \item \textbf{\underline{Ca}sual \underline{Cu}bic \underline{S}tochastic \underline{G}radient \underline{D}escent (\algname{CaCuSGD}).} Under standard assumptions, having the mini-batch gradient and the batched Hessian–gradient estimator, our analysis recovers either the stochastic cubic–Newton decrease~\cite{chayti2023unified} or the standard stochastic decrease under $L$-smoothness.
    
\end{itemize}

\section{Notation and Main Assumptions}\label{sec:nanda}
Our theory is based on the following assumption about second-order smoothness, which is also the key tool in proving the convergence of cubic Newton~\cite{nesterov2006cubic}.
\begin{assumption}\label{as:hessian_smooth}
    There exists a constant $H>0$ such that for any $x, y\in \R^d$
    \begin{align}
        &f(y)
        \le f(x) + \inp{\nabla f(x)}{y-x} + \frac{1}{2} \inp{\nabla^2 f(x)(y-x)}{y-x} + \frac{H}{3}\|y-x\|^3, \label{eq:taylor}
        \\
        &\|\nabla f(y) - \nabla f(x) - \nabla^2 f(x)(y-x)\|\le H\|x-y\|^2. \label{eq:grad_dif_hess}
    \end{align}
    Both inequalities hold if the Hessian of $f$ is $2H$-Lipschitz, i.e. $\|\nabla^2 f(x)-\nabla^2 f(y)\|\le 2H\|x-y\|$ for all $\x,y\in\R^d$.
\end{assumption}
The proof is given in~\cite[Lemma~1]{nesterov2006cubic}.

\begin{assumption}[Convexity]\label{as:convexity}
    Function $f: \R^d \to \R$ is convex with, i.e. $\forall x,y \in \R^d$ it holds
    \begin{equation}
        f(y) \geq f(x) + \langle \nabla f(x), y - x \rangle.\label{eq:convexity}
    \end{equation}
\end{assumption}

\begin{assumption}[Bounded level set]\label{as:bounded-level-set}
Let $x^0\in\R^d$ and $f:\R^d\to\R$ be continuous. The sublevel set
\[
    \mathcal{L}_0:=\{x\in\R^d:\ f(x)\le f(x^0)\}
\]
is bounded, i.e.,
\[
    D:=\sup_{x\in\mathcal{L}_0} \|x-x^\star\|<\infty,
    \qquad x^\star \in \arg\min f.
\]
\end{assumption}

\section{Descent Analysis}\label{sec:core}
Since $f(x)$ has $2H$-Lipschitz--Hessian, there exists $H_k \leq H$ s.t.
 \begin{equation*}\textstyle
    f(x^+) \leq 
    f(x^k)
    +\inp{\nabla f(x^k)} {x^+ - x^k} 
    + \frac{1}{2} \inp{x^+ - x^k}{\nabla^2 f(x^k) (x^+ - x^k)} 
    +  \frac{H_k}{3} \norm{x^+ - x^k}^3.
\end{equation*}

By setting $x^+ = x^k - \frac{\nabla f(x^k)}{\sqrt{M_k\norm{\nabla f(x^k)}}}$ for $M_k \ge H_k$ and $\alpha \in (0, 1)$ we then obtain

 \begin{multline}\textstyle
    \label{eq:per-condition}
    f\rbr*{x^k - \frac{\nabla f(x^k)}{\sqrt{M_k\norm{\nabla f(x^k)}}}} 
    \leq 
    f(x^k) -  \frac{2(1-\alpha)}{3\sqrt{M_k}} \norm{\nabla f(x^k)}^{3/2}
    \\
    + \frac{1}{2M_k \norm{\nabla f(x^k)}} \sbr*{\inp{\nabla f(x^k)}{\nabla^2 f(x^k) \nabla f(x^k)} -  \frac{4\alpha\sqrt{M_k}}3 \norm{\nabla f(x^k)}^{5/2}}.
\end{multline}

The latter means that there exists
\begin{equation}\label{eq:certificate}\textstyle
     M_k = \max{\rbr*{H_k, \frac{9\inp*{\nabla f(x^k)}{\nabla^2 f(x^k) \nabla f(x^k)}^2 }{16\alpha^2\norm{\nabla f(x^k)}^5}}} 
\end{equation}
s.t.
\begin{equation}\label{eq:condition}\textstyle
    f\rbr*{x^k - \frac{\nabla f(x^k)}{\sqrt{M_k\norm{\nabla f(x^k)}}}}  \leq f(x^k) -  \frac{2(1-\alpha)}{3\sqrt{M_k}} \norm{\nabla f(x^k)}^{3/2}.
\end{equation}

Inequality~\eqref{eq:certificate} employs the Hessian–gradient product can be computed at (asymptotically) gradient cost, without forming the Hessian. Together with inequality~\eqref{eq:condition}, one can estimate local, directional Hessian--Lipschitz constant, which we do via first-order backtracking.

\emph{Case I (cubic-model decrease):} If $M_k=H_k$, then the above gives
\begin{equation} \label{eq:cubic-model decrease}\textstyle
    f(x^{k+1}) \le  f(x^k)  -  \frac{2(1-\alpha)}{3\sqrt{H_k}} \|\nabla f(x^k)\|^{3/2},
\end{equation}
which yields the usual cubic-model decrease and the $\mathcal{O}(k^{-2})$ regime under \cref{as:convexity,as:bounded-level-set}.

\emph{Case II (quadratic-model decrease):} If $M_k = \frac{9\inp*{\nabla f(x^k)}{\nabla^2 f(x^k) \nabla f(x^k)}^2}{16\alpha^2\norm{\nabla f(x^k)}^5}$ then
\begin{equation}\label{eq:quadratic-model decrease}\textstyle
    f\rbr*{x^k - \frac{\nabla f(x^k)}{\sqrt{M_k\norm{\nabla f(x^k)}}}}  \leq f(x^k) -  \frac{8(1-\alpha)\alpha}{9L_k} \norm{\nabla f(x^k)}^{2},
\end{equation}
where $L_k = \frac{\inp*{\nabla f(x^k)}{\nabla^2 f(x^k) \nabla f(x^k)}}{\norm{\nabla f(x^k)}^2}$, it corresponds to the classical quadratic-model decrease and yields the $\mathcal{O}(k^{-1})$ regime under \cref{as:convexity,as:bounded-level-set} and an additional smoothness assumption. Note that gradient $L$-smoothness is not an issue since $\nabla^2 f$ is is $2H$-smooth and, by monotonicity under \cref{as:bounded-level-set}, the iterates remain in a compact set $\mathcal L_0$, and $L=\sup_{x\in\mathcal L_0}\|\nabla^2 f(x)\|<\infty$. \cref{sec:fo} consider the standard $L$-smoothness and several alternatives.

\section{Regularized Newton Methods}\label{sec:cacun}
Inequality~\eqref{eq:condition} immediately certificates for skipping the second order step when $M_k \le H$. But, we propose a slightly different rule which leans more towards performing \algname{CRN} step, preserving the constants of the original results in~\cite{nesterov2006cubic,nesterov2008accelerating}.

\subsection{Casual Cubic Newton}
\begin{algorithm}[t]
\caption{Casual Cubic Newton (\algname{CaCuN})}
\label{alg:cacun}
\begin{algorithmic}[1]
    \State \textbf{Input:} $x^0 \in \R^d$, $H > 0$
    \For{$k = 1,2,\dots$}
        \If {$f\rbr*{x^k - \frac{2\nabla f(x^k)}
            {\sqrt{3H\norm{\nabla f(x^k)}}}} \leq f(x^k) -  \frac{1}{\sqrt{2H}} \rbr*{\frac{2}{3}}^{3/2}\norm{\nabla f(x^k)}^{3/2}$}
            \State
            $ x^{k+1} = x^k - \frac{2\nabla f(x^k)}         {\sqrt{3H\norm{\nabla f(x^k)}}}$
        \Else {}
            \State
            $x^{k+1}=x^k - (\nabla^2 f(x^k)+H \|x^{k+1}-x^k\| \mI)^{-1}\nabla f(x^k)$, 
            \Comment{CRN step}
         \EndIf
 	\EndFor
\end{algorithmic}	
\end{algorithm}

In order to align constants with the standard cubic Newton convergence one can check inequality~\eqref{eq:condition} with $\alpha=0.5$ and $M_k = \nicefrac{3}{4}H$. If it holds then $
    x^{k+1} = x^k - \frac{\nabla f(x^k)}{\sqrt{M_k\norm{\nabla f(x^k)}}} $
gives the CRN decrease result~\cite{nesterov2006cubic}
\begin{equation}\label{eq:crn decrease}
    f(x^{k+1}) \leq f(x^k) -  \frac{1}{3\sqrt{M_k}} \norm{\nabla f(x^k)}^{3/2} = f(x^k) - \frac{1}{\sqrt{2H}} \rbr*{\frac{2}{3}}^{3/2}\norm{\nabla f(x^k)}^{3/2}
\end{equation}
and pure CRN step is performed otherwise (\cref{alg:cacun}).
When the first step is the gradient step then
\begin{multline*}
    f(x^{1}) \leq f(y^0) -  \frac{\rbr*{2/3}^{3/2}}{\sqrt{2H}}  \norm{\nabla f(y^0)}^{3/2} \equiv \min_y \sbr*{f(y^0) + \inp{\nabla f(y^0)}{y-y^0} + H\|y-y^0\|^3}
        \\
        \leq \min_y \sbr*{f(y) + H\|y-y^0\|^3}
\end{multline*}
aligns with the original proof for the first iterate in~\cite[$L=2H$]{nesterov2006cubic}. Thus, 
\begin{equation*}
    f(x^k) - f_\star \leq \frac{3HD^3}{\rbr{1 + k/3}^2}
\end{equation*}
follows for \cref{alg:cacun} by \cref{as:convexity,as:bounded-level-set}.

\subsection{Accelerated Casual Cubic Newton}
The same technique applies to the accelerated \algname{CRN}. The resulting \cref{alg:acccacun} mirrors the original converge guarantees.
\begin{algorithm}[t]
\caption{Accelerated Casual Cubic Newton (\algname{AccCaCuN})}
\label{alg:acccacun}
\begin{algorithmic}[1]
    \State \textbf{Input:} $y^0 = x^0 \in \R^d$, $H > 0$, $M_k \le H$, i.e. $M_k = H$
    \State \textbf{Init:}
    \If {$f\rbr*{y^0 - \frac{\nabla f(y^0)}
        {\sqrt{M_0\norm{\nabla f(y^0)}}}} \leq f(y^0)-  \frac{\sqrt2}{3\sqrt{M_0}} \norm{\nabla f(y^0)}^{3/2}$}
        \State $ x^{1} = y^0 - \frac{\nabla f(y^0)}         {\sqrt{M_0\norm{\nabla f(y^0)}}}$
    \Else {}
        \State
         $x^{1}=x^0 - (\nabla^2 f(x^0)+2H \|x^{1}-x^0\| \mI)^{-1}\nabla f(x^0)$
    \EndIf
    \State $\psi_{1}(x) = \frac{N}{3}\norm{x - x_0}^3 + f(x^{1})$
    \For{$k = 1,2,\dots$}
        \State $v^k = \arg\min_{x \in \R^d} \psi_k(x)$
        \State $y^k = \frac{k}{k+3} x^k + \frac{3}{k+3} v^k$
        \If {$f\rbr*{y^k - \frac{\nabla f(y^k)}
            {\sqrt{M_k\norm{\nabla f(y^k)}}}} \leq f(y^k)-  \frac{1}{\sqrt{3M_k}} \norm{\nabla f(y^k)}^{3/2}$} 
            \State $ x^{k+1} = y^k - \frac{\nabla f(y^k)}         {\sqrt{M_k\norm{\nabla f(y^k)}}}$ 
            \State $\psi_{k+1}(x) = \psi_k(x) + \frac{(k+1)(k+2)}{2}  \left[ f(y^{k}) + \langle \nabla f(y^{k}), x - y^{k} \rangle \right]$
        \Else {}
            \State
            $x^{k+1}=x^k - (\nabla^2 f(x^k)+2H \|x^{k+1}-x^k\| \mI)^{-1}\nabla f(x^k)$ \Comment{CRN step}
            \State $\psi_{k+1}(x) = \psi_k(x) + \frac{(k+1)(k+2)}{2} \left[ f(x^{k+1}) + \langle \nabla f(x^{k+1}), x - x^{k+1} \rangle \right]$
         \EndIf
 	\EndFor
\end{algorithmic}	
\end{algorithm}

\begin{theorem}
    Let~\cref{as:convexity,as:bounded-level-set,as:hessian_smooth} hold. Then the iterates generated by~\cref{alg:acccacun} satisfy the following
    \[
        f(x^k) - f(x^*) \leq \frac{14 \cdot 2H \|x^0 - x^*\|^3}{k(k + 1)(k + 2)}.
    \]
\end{theorem}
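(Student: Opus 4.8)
The plan is to run Nesterov's estimate-sequence analysis for accelerated cubic Newton and to verify that every iterate of \cref{alg:acccacun}, whether it takes the gradient branch or the \algname{CRN} branch, reproduces exactly the per-step guarantee that this analysis demands, so that the original $\cO(k^{-3})$ argument goes through with the stated constant. Concretely, I would set $A_k=\tfrac{k(k+1)(k+2)}{6}$, so that $a_k:=A_{k+1}-A_k=\tfrac{(k+1)(k+2)}{2}$ is precisely the weight added to $\psi_k$ in the algorithm, and track the two invariants
\begin{equation*}
  A_k f(x^k)\le \psi_k^\star:=\min_x\psi_k(x), \qquad \psi_k^\star\le A_k f(x^\star)+C\|x^0-x^\star\|^3,
\end{equation*}
whose combination gives $A_k\big(f(x^k)-f(x^\star)\big)\le C\|x^0-x^\star\|^3$ and hence the claimed bound after fixing $N$ and $C$ and simplifying $6C/(k(k+1)(k+2))$ to $14\cdot 2H$.

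The upper bound on $\psi_k^\star$ is the easy direction. Evaluating $\psi_k$ at $x^\star$, each accumulated linearization $f(z)+\langle\nabla f(z),x^\star-z\rangle$ is $\le f(x^\star)$ by \cref{as:convexity}, the weights sum to $A_k$, and the leading term contributes $\tfrac{N}{3}\|x^0-x^\star\|^3$. The only extra contribution is $f(x^1)-f(x^\star)$, which I would control through the initialization exactly as in the \algname{CaCuN} first-iterate computation: both init branches yield $f(x^1)\le\min_y[f(y)+H\|y-x^0\|^3]\le f(x^\star)+H\|x^0-x^\star\|^3$, using the lower Taylor bound of \cref{as:hessian_smooth} together with convexity.

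The core is the lower invariant $A_kf(x^k)\le\psi_k^\star$, proved by induction. In the inductive step I would combine uniform convexity of the cubic prox term, $\psi_k(x)\ge\psi_k^\star+\gamma\|x-v^k\|^3$ with $\gamma=cN$ for a universal $c>0$, with the hypothesis $A_kf(x^k)\le\psi_k^\star$ and the convexity inequality $f(x^k)\ge f(z)+\langle\nabla f(z),x^k-z\rangle$ at the linearization center $z$. Using the identity $A_{k+1}y^k=A_kx^k+a_kv^k$ (which follows from $y^k=\tfrac{k}{k+3}x^k+\tfrac{3}{k+3}v^k$) and minimizing the residual $a_k\langle\nabla f(z),x-v^k\rangle+\gamma\|x-v^k\|^3$ over $x$, the gradient branch ($z=y^k$) closes to
\begin{equation*}
  \psi_{k+1}^\star\ge A_{k+1}f(y^k)-\tfrac{2}{3\sqrt3}\,\frac{a_k^{3/2}}{\sqrt\gamma}\,\|\nabla f(y^k)\|^{3/2},
\end{equation*}
so that $A_{k+1}f(x^{k+1})\le\psi_{k+1}^\star$ reduces to $f(x^{k+1})\le f(y^k)-\tfrac{2}{3\sqrt3}\tfrac{a_k^{3/2}}{A_{k+1}\sqrt\gamma}\|\nabla f(y^k)\|^{3/2}$, which is exactly the certificate tested in the algorithm once $\gamma\gtrsim M_k$, i.e.\ for $N$ a fixed multiple of $H\ge M_k$.

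The step I expect to be the main obstacle is the \algname{CRN} branch of this induction. There the estimate function linearizes at $x^{k+1}$ rather than at the momentum point, and carrying out the same reduction leaves a residual term $A_{k+1}\langle\nabla f(x^{k+1}),\,y^k-x^{k+1}\rangle$ that must dominate the cubic penalty $\tfrac{2}{3\sqrt3}\tfrac{a_k^{3/2}}{\sqrt\gamma}\|\nabla f(x^{k+1})\|^{3/2}$; this is the delicate point, and it is precisely where the first-order optimality of the cubic step and the gradient bound $\|\nabla f(x^{k+1})\|\le 3H\|x^{k+1}-y^k\|^2$ must be invoked. The remaining quantitative ingredients are routine: the uniform bound $\tfrac{a_k^{3/2}}{A_{k+1}}\le\tfrac{3}{\sqrt2}$, which I would verify from $\tfrac{(k+1)(k+2)}{(k+3)^2}\le 1$, and pinning down the uniform-convexity constant of $\tfrac{N}{3}\|\cdot-x^0\|^3$; together these fix the admissible $N$ and propagate into the final constant.
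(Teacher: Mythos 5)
Your proposal is correct and follows essentially the same route as the paper's proof: the same estimate-sequence invariants with $A_k=\tfrac{k(k+1)(k+2)}{6}$ and $a_k=\tfrac{(k+1)(k+2)}{2}$, the same two-branch induction in which the gradient branch reduces precisely to the algorithm's descent test once $N$ is fixed as a multiple of $H$ (the paper takes $N=12H$), and the same final combination of the two invariants evaluated at $x^\star$. The CRN-branch obstacle you flag is resolved in the paper exactly as you anticipate, via Nesterov's Lemma~6 (restated as \cref{lem:nesterov6}), which combines the cubic step's first-order optimality with the bound $\|\nabla f(x^{k+1})\|\le 3H\|x^{k+1}-y^k\|^2$ to yield $\langle\nabla f(x^{k+1}),y^k-x^{k+1}\rangle\ge\tfrac{1}{\sqrt{3H}}\|\nabla f(x^{k+1})\|^{3/2}$.
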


\section{Casual Cubic AdaN} \label{sec:regnewton}
Same technique is also applicable for regularized global Newton~\cite{mishchenko2023regularized}, which we refer as \algname{RegNewton}. We simply perform
\begin{equation}\label{eq regnewton step}
    x^{k+1}=x^k-\frac{\nabla f(x^k)}{\sqrt{H_k\|\nabla f(x^k)\|}}
\end{equation}
if
\begin{equation}\label{eq:regnewton-descent}
    f\left(x^k-\frac{\nabla f(x^k)}{\sqrt{H_k\|\nabla f(x^k)\|}}\right)
    \le f(x^k)-\frac{2}{3\cdot64\sqrt{H_k}}\|\nabla f(x^k)\|^{3/2},
\end{equation}
where, for the non-adaptive \algname{RegNewton}, $H_k \equiv H$ is fixed; $H$ is a known global upper bound on the Hessian–Lipschitz constant.


Instead of assuming a known global $H$, we adapt a local Hessian--Lipschitz estimate $H_k$ from first-order information via \eqref{eq:per-condition} and accept the gradient step~\eqref{eq regnewton step} whenever the \algname{AdaN}~\cite{mishchenko2023regularized} descent test~\eqref{eq:regnewton-descent} holds, which is the key inequality for \algname{AdaN} convergence. The resulting scheme does not always improve wall-clock time. But we observe consistent gains with a simple two-stage scheme: run the gradient step governed by \eqref{eq:regnewton-descent} (backtracking on $H_k$) until the test first fails, then switch to \algname{AdaN}. This also “warms up’’ $H_k$ using only first-order backtracking. The resulting algorithm is listed as \cref{alg:cacuadan}.

\begin{algorithm}[H]
\caption{Casual Cubic \algname{AdaN} (\algname{CaCuAdaN)}}
\label{alg:cacuadan}
\begin{algorithmic}[1]
    \State \textbf{Input:} $x^0 \in \R^d$, $ H>0$, $H>0$, $flag = 0$
    \For{$k = 0,1,\dots$}
        \If{ \textbf{not } \textit{ flag }}
            \State $ H= H/2$
                \While{$f\rbr*{x^k - \frac{\nabla f(x^k)}{\sqrt{H\norm{\nabla f(x^k)}}}} \geq f(x^k) + \frac{1}{2} \frac{\inp{\nabla f(x^k)}{\nabla^2 f(x^k) \nabla f(x^k)}}{H \norm{\nabla f(x^k)}} -  \frac2{3\sqrt{H}} \norm{\nabla f(x^k)}^{3/2}$}
                
                \State $ H = 2  H$
                \EndWhile
                \If {$f\rbr*{x^k - \frac{\nabla f(x^k)}{\sqrt{H\norm{\nabla f(x^k)}}}} \geq f(x^k) -  \frac2{3\cdot64\sqrt{H}} \norm{\nabla f(x^k)}^{3/2}$}
                    \State \textit{flag = 1}
                    \State \textbf{break}
                \EndIf
                \State $x^{k+1} = x^k - \frac{\nabla f(x^k)}{\sqrt{H\norm{\nabla f(x^k)}}}$
        \Else
                \State $x^{k+1}$ is calculated by \algname{AdaN} step.
        \EndIf
 	\EndFor
\end{algorithmic}	
\end{algorithm}

Analogously to \algname{AdaN+}~\cite{mishchenko2023regularized}, we propose a heuristic extension that uses gradient-based backtracking to calibrate the local Hessian–Lipschitz constant. The resulting algorithm is listed as \cref{alg:cacuadanp}.
\begin{algorithm}[H]
\caption{Casual Cubic AdaN+ (\algname{CaCuAdaN+)}}
\label{alg:cacuadanp}
\begin{algorithmic}[1]
    \State \textbf{Input:} $x^0 \in \R^d$, $H>0$, 
    \For{$k = 0,1,\dots$}
        \State $H=H/8$
        \While{$f\rbr*{x^k - \frac{\nabla f(x^k)}{\sqrt{H\norm{\nabla f(x^k)}}}} \geq f(x^k) + \frac{1}{2} \frac{\inp{\nabla f(x^k)}{\nabla^2 f(x^k) \nabla f(x^k)}}{H \norm{\nabla f(x^k)}} -  \frac2{3\sqrt{H}} \norm{\nabla f(x^k)}^{3/2}$}
            \State
            $H = 2 H$
        \EndWhile
        \State $x^{k+1}=x^k - (\nabla^2 f(x^k)+\sqrt{H\|\nabla f(x^k)\|} \mI)^{-1}\nabla f(x^k)$, 
 	\EndFor
\end{algorithmic}	
\end{algorithm}

\section{First-Order Method}\label{sec:fo}
In this section we consider first-order update \(x^{k+1}=x^k-\nabla f(x^k)/\sqrt{M_k\|\nabla f(x^k)\|}\). We additionally make assumptions on gradient smoothness guaranteeing decrease when cubic regime is not feasible.

\subsection{Vanishing Directional Curvature}
Lets take a closer look at inequality \eqref{eq:per-condition}
with $\alpha = 3/4$ for simplicity.
If the Hessian eigenvalue in the gradient direction growth is bounded as
\begin{equation}\label{eq:accclass}\textstyle
    \frac{\inp{\nabla f(x^k)}{\nabla^2 f(x^k) \nabla f(x^k)}}{\norm{\nabla f(x^k)}^2} \leq  \sqrt{C\norm{\nabla f(x^k)}}
\end{equation}
with constant $C$, then setting $M_k = M = \max{\rbr*{H, C}}$ leads to
\begin{equation*}\textstyle
    f\rbr*{x^k - \frac{\nabla f(x^k)}{\sqrt{M\norm{\nabla f(x^k)}}}} 
    \\
    \leq 
    f(x^k) -  \frac{1}{6\sqrt{M}} \norm{\nabla f(x^k)}^{3/2},
\end{equation*}
and $\cO\rbr*{\frac{\rbr*{\sqrt{MD^3} + \sqrt{f_0 - f_\star}}^2}{k^{2}}}$ convergence rate under \cref{as:convexity,as:bounded-level-set} by \cite[Lemma A.1]{nesterov2019inexact}.

While the class in \eqref{eq:accclass} is uncommon in modern practice, it is nonempty and includes several simple families with explicit constants and Lipschitz Hessians. We provide examples below.

\begin{example}\label{example:iso-cubic}
Let $f(x)=\frac{H}{3}\|x\|^3$ with $H>0$. Then $\nabla^2 f$ is $2H$-Lipschitz and \eqref{eq:accclass} holds with $C=4H$.
\end{example}

\begin{example}\label{example:sep-cubic}
Let $f(x)=\sum_{i=1}^d \frac{H_i|x_i|^3}{3}$ with $H_i>0$ and $H=\max_iH_i$.
Then $\nabla^2 f$ is $2H$-Lipschitz and \eqref{eq:accclass} holds with $C=4H$.
\end{example}

\begin{example}[Logistic function]\label{example:logistic}
Let $a\in\mathbb{R}^d\setminus\{0\}$ and $f(x)=\log\big(1+e^{-a^\top x}\big)$. Then $\nabla^2 f$ is $\frac{\|a\|^3}{6\sqrt{3}}$-Lipschitz and \eqref{eq:accclass} holds with $C=\frac{4}{27}\|a\|^3$.
\end{example}

\subsection{Casual Cubic Adaptive Gradient Descent (CaCuAdGD)}

In addition to the uncommon class in \eqref{eq:accclass}, we consider the standard smoothness setting and a relaxed variant. Using global (worst-case) constants typically yields overly conservative steps and poor practical performance. To avoid this, we introduce \algname{CaCuAdGD} (\cref{alg:cacuadgd}), a monotone, parameter-free, adaptive scheme.

\textbf{Idea.} At each iteration, \algname{CaCuAdGD} estimates the local Hessian smoothness \emph{along the gradient direction} via a backtracking procedure that relies only on Hessian–vector products (no explicit Hessian computations). This yields an \emph{automatic switch} between:
(i) a \emph{bounded-Hessian} regime, where the quadratic model driven decrease holds; and (ii) a \emph{Hessian-Lipschitz} (cubic) regime, where a cubic-model controls the remainder. 
\begin{remark}
    Since $\nabla^2 f$ is $2H$-smooth and, by monotonicity with \cref{as:bounded-level-set}, the iterates stay in the compact sublevel set $\mathcal L_0$, we have
    $L \coloneqq \sup_{x\in\mathcal L_0}\|\nabla^2 f(x)\| < \infty$,
    so $\nabla f$ is $L$-Lipschitz on $\mathcal L_0$.
\end{remark}


\begin{algorithm}[H]
\caption{Casual Cubic Adaptive Gradient Descent (\algname{CaCuAdGD})}
\label{alg:cacuadgd}
\begin{algorithmic}[1]
    \State \textbf{Input:} $x^0 \in \R^d$, $H>0$, $\alpha \in (0, 1)$
    \For{$k = 0,1,\dots$}
        \State  $\widehat H = \frac{9\inp*{\nabla f(x^k)}{\nabla^2 f(x^k) \nabla f(x^k)}^2 }{16\alpha^2\norm{\nabla f(x^k)}^5}$
        \State $H = H / 16$ 
        \While{
        $f\rbr*{x^k - \frac{\nabla f(x^k)}{\sqrt{H\norm{\nabla f(x^k)}}}} \geq f(x^k) + \frac{1}{2} \frac{\inp{\nabla f(x^k)}{\nabla^2 f(x^k) \nabla f(x^k)}}{H \norm{\nabla f(x^k)}} -  \frac2{3\sqrt{H}} \norm{\nabla f(x^k)}^{3/2}$ \textbf{and}  $\widehat H < H$}
            \State
            $H = 2 H$
        \EndWhile
        \State
             $ x^{k+1} = x^k - \frac{\nabla f(x^k)}         {\sqrt{\max\cbr*{H,  \widehat{H}}\norm{\nabla f(x^k)}}}$
        
 	\EndFor
\end{algorithmic}	
\end{algorithm}

\subsubsection{Lipschitz gradient}
\begin{assumption}[$L$-smoothness]\label{as:Lsmooth}
    Function $f: \R^d \to \R$ is $L$-smooth, i.e., for all $x, y \in \R^d$ we have
    \begin{equation}
        \|\nabla^2 f(x)\| \leq L. \label{eq:Lsmooth}
    \end{equation}
\end{assumption}

Setting $\alpha=0.5$ in \eqref{eq:certificate} yields
$
     M_k = \max{\rbr*{H_k, \frac{9\inp*{\nabla f(x^k)}{\nabla^2 f(x^k) \nabla f(x^k)}^2 }{4\norm{\nabla f(x^k)}^5}}} 
     $
and either
\begin{equation*}
    f(x^{k+1}) \leq f(x^k) -  \frac{1}{3\sqrt{H_k}} \norm{\nabla f(x^k)}^{3/2} \quad \text{ or } \quad f(x^{k+1}) \leq f(x^k) -  \frac{2}{9L_k} \norm{\nabla f(x^k)}^{2},
\end{equation*}
where $L_k = \frac{\inp*{\nabla f(x^k)}{\nabla^2 f(x^k) \nabla f(x^k)}}{\norm{\nabla f(x^k)}^2} \leq L$.

Since the optimality gap $f(x^k) - f_\star$ is monotonically decreases, iterates split into 2 sets.

\begin{lemma}\label{lem:two-regime}
    Let~\cref{as:convexity,as:bounded-level-set,as:Lsmooth,as:hessian_smooth} hold. Then the iterates generated by~\cref{alg:cacuadgd} with $\alpha=0.5$ satisfy the following
    
    \begin{eqnarray}
        &\text{ (i) }& \bigl(f(x^k)-f_\star\bigr)^{1/2}\ge \frac{3}{2}\frac{LD^2}{\sqrt{HD^3}}, \quad \text{then}\quad  f(x^{k+1}) - f_\star  \le  f(x^k) - f_\star - \frac{1}{3\sqrt{HD^3}}\bigl(f(x^k)-f_\star\bigr)^{3/2} \label{eq first stage}
        \\
        &\text{(ii) }& \bigl(f(x^k)-f_\star\bigr)^{1/2}\le \frac{3}{2}\frac{LD^2}{\sqrt{HD^3}}, \quad \text{then}\quad  f(x^{k+1}) - f_\star  \le  f(x^k) - f_\star - \frac{2}{9LD^2}\bigl(f(x^k)-f_\star\bigr)^{2}.  \label{eq second stage}
    \end{eqnarray}
    
\end{lemma}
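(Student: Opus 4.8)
The plan is to reduce both claims to the two per-iteration decrease estimates already derived for $\alpha=1/2$ (the cubic form, valid with $H_k\le H$, and the quadratic form, valid with $L_k\le L$), to convert these gradient-norm decreases into optimality-gap decreases via a convexity-based lower bound on $\|\nabla f(x^k)\|$, and then to observe that the stated threshold is exactly the crossover point of the two resulting lower bounds, so that one never needs to determine which regime the algorithm actually selects at step $k$.

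First I would establish the gradient lower bound. Because the method is monotone (every accepted step decreases $f$), each iterate stays in the sublevel set $\mathcal{L}_0$, so $\|x^k-x^\star\|\le D$ by \cref{as:bounded-level-set}. Convexity (\cref{as:convexity}) then gives $f(x^k)-f_\star\le\langle\nabla f(x^k),x^k-x^\star\rangle\le D\|\nabla f(x^k)\|$, hence $\|\nabla f(x^k)\|\ge (f(x^k)-f_\star)/D$. Writing $\delta_k:=f(x^k)-f_\star$ and substituting this bound into the cubic decrease (with $H_k\le H$) yields a guaranteed decrease of at least $c_1:=\delta_k^{3/2}/(3\sqrt{HD^3})$, while substituting into the quadratic decrease (with $L_k\le L$) yields at least $c_2:=2\delta_k^{2}/(9LD^2)$.

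The crux is then a single observation: whichever regime the algorithm enters on iteration $k$, the achieved decrease is at least $\min(c_1,c_2)$, since in the cubic regime it is at least $c_1$ and in the quadratic regime at least $c_2$. A direct comparison shows $c_1\ge c_2$ if and only if $\delta_k^{1/2}\le \tfrac{3}{2}\,LD^2/\sqrt{HD^3}$, i.e. the stated threshold is precisely where the two bounds cross. Hence in case (i) (above the threshold) $\min(c_1,c_2)=c_1$, which gives $\delta_{k+1}\le \delta_k-c_1$ as in \eqref{eq first stage}; and in case (ii) (below the threshold) $\min(c_1,c_2)=c_2$, which gives $\delta_{k+1}\le \delta_k-c_2$ as in \eqref{eq second stage}.

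The only genuine subtlety, and the step I would handle most carefully, is the interplay between monotonicity and compactness together with the crossover algebra: one must confirm that $\{f(x^k)\}$ is indeed monotone so that $x^k\in\mathcal{L}_0$ legitimately licenses the uniform bound $\|x^k-x^\star\|\le D$ used above, and that the threshold in the statement matches the $c_1=c_2$ equation exactly. The remaining manipulations are routine substitutions, so I expect no further difficulty.
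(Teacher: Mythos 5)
Your proposal is correct and matches the paper's own proof essentially step for step: the same two per-iteration decrease estimates (cubic with $H_k\le H$, quadratic with $L_k\le L$), the same convexity bound $\|\nabla f(x^k)\|\ge (f(x^k)-f_\star)/D$ on the sublevel set, and the same observation that the stated threshold is exactly the crossover point where the two gap-based decreases coincide, so the guaranteed decrease is the minimum of the two regardless of which regime the algorithm selects. No gaps; the monotonicity point you flag is indeed the same justification the paper uses to keep iterates in $\mathcal{L}_0$.
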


Inequality~\eqref{eq first stage} immediately implies
\begin{corollary}
     Let~\cref{as:convexity,as:bounded-level-set,as:Lsmooth,as:hessian_smooth} hold. Then the iterates generated by~\cref{alg:cacuadgd} with $\alpha=0.5$ satisfy for all $k$ s.t. $\bigl(f(x^k)-f_\star\bigr)^{1/2} \ge \frac{3}{2}\frac{LD^2}{\sqrt{HD^3}}$ the following
    \begin{equation*}
        f(x^{k+1}) - f_\star \leq \frac{\rbr*{9\sqrt{HD^3}+3\sqrt{f(x^0) - f_\star}}^2}{k^2} .
    \end{equation*}
\end{corollary}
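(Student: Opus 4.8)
The plan is to treat the one-step contraction \eqref{eq first stage} as a scalar recursion for the optimality gap and feed it into the standard reciprocal-square-root telescoping that underlies \cite[Lemma A.1]{nesterov2019inexact}. Write $\delta_k := f(x^k)-f_\star$ and $c := \tfrac{1}{3\sqrt{HD^3}}$, so that \eqref{eq first stage} reads $\delta_{k+1}\le \delta_k - c\,\delta_k^{3/2}$.

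First I would pin down the range of indices on which this recursion is actually available. Inequality \eqref{eq first stage} is only asserted at iterates with $\delta_k^{1/2}\ge \tau := \tfrac{3}{2}LD^2/\sqrt{HD^3}$. Since \cref{alg:cacuadgd} is monotone (both the cubic- and quadratic-model branches decrease $f$), the sequence $\{\delta_k\}$ is nonincreasing, so $\{k:\delta_k^{1/2}\ge\tau\}$ is an initial segment of $\mathbb{N}$. Consequently, if $\delta_k^{1/2}\ge\tau$, then $\delta_j\ge\delta_k\ge\tau^2$ for every $j\le k$, and the recursion $\delta_{j+1}\le\delta_j-c\,\delta_j^{3/2}$ is valid for all $0\le j\le k$ at once. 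This monotonicity bookkeeping — guaranteeing the contraction on the whole prefix rather than only at the endpoint — is the one place that needs care; everything else is routine.

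Next I would set $r_j:=\sqrt{\delta_j}$ and convert the multiplicative contraction into an additive increment for $1/r_j$. Using $r_{j+1}\le r_j$ and $r_j+r_{j+1}\le 2r_j$,
\[
\frac{1}{r_{j+1}}-\frac{1}{r_j}
=\frac{r_j-r_{j+1}}{r_j r_{j+1}}
\ge \frac{r_j-r_{j+1}}{r_j^2}
=\frac{\delta_j-\delta_{j+1}}{r_j^2\,(r_j+r_{j+1})}
\ge \frac{c\,\delta_j^{3/2}}{2r_j^3}
=\frac{c}{2},
\]
so each admissible step increases $1/r_j$ by at least $c/2$. Telescoping over $j=0,\dots,k$ gives $1/r_{k+1}\ge 1/r_0+\tfrac{c}{2}(k+1)\ge \tfrac{ck}{2}$, hence $\delta_{k+1}=r_{k+1}^2\le 4/(c^2k^2)$.

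Finally, substituting $c=1/(3\sqrt{HD^3})$ yields $\delta_{k+1}\le 36HD^3/k^2$, and since $36HD^3\le 81HD^3\le\bigl(9\sqrt{HD^3}+3\sqrt{f(x^0)-f_\star}\bigr)^2$, the claimed bound follows a fortiori; retaining the $1/r_0=1/\sqrt{f(x^0)-f_\star}$ term in the telescoped estimate and bounding $r_{k+1}\le 2r_0/(2+c\,r_0(k+1))$ reproduces the stated constant exactly. I expect no genuine analytic obstacle: the only content beyond the cited scalar rate lemma is the monotonicity argument certifying \eqref{eq first stage} uniformly over $j\le k$.
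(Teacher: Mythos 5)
Your proof is correct, but it takes a more self-contained route than the paper. The paper's proof is a two-line reduction: rescale $\xi_k=\frac{f(x^k)-f_\star}{9HD^3}$ so that \eqref{eq first stage} becomes $\xi_k-\xi_{k+1}\ge\xi_k^{3/2}\ge\xi_{k+1}^{3/2}$, then invoke the scalar-sequence result \cite[Lemma A.1]{nesterov2019inexact} with $\beta=\tfrac12$, which directly outputs $f(x^k)-f_\star\le\frac{81HD^3}{k^2}\bigl(1+\sqrt{(f(x^0)-f_\star)/(9HD^3)}\bigr)^2$, i.e.\ exactly the stated constant. You instead prove the rate from scratch by telescoping the reciprocal square root: your increment bound $\frac{1}{r_{j+1}}-\frac{1}{r_j}\ge \frac{c}{2}$ is, in essence, an inlined proof of that cited lemma. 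What your route buys: first, a sharper constant, $\delta_{k+1}\le 36HD^3/k^2$ versus the paper's $\bigl(9\sqrt{HD^3}+3\sqrt{f(x^0)-f_\star}\bigr)^2/k^2 = 81HD^3\bigl(1+\sqrt{(f(x^0)-f_\star)/(9HD^3)}\bigr)^2/k^2$, so the corollary follows a fortiori; second, an explicit justification---via monotonicity of the gap---that the recursion \eqref{eq first stage} is available on the entire prefix $j\le k$, a bookkeeping point the paper's proof leaves implicit. Two minor remarks: your closing claim that retaining the $1/r_0$ term ``reproduces the stated constant exactly'' is not literally true (it yields the harmonic-type bound $\delta_{k+1}\le 4r_0^2/\bigl(2+c\,r_0(k+1)\bigr)^2$, which is stronger than, but not of the same form as, the paper's additive constant), though this is harmless since your a fortiori comparison already closes the proof; and before dividing by $r_{j+1}$ you should dispose of the degenerate case $r_{j+1}=0$, where $\delta_{k+1}=0$ by monotonicity and the bound holds trivially.
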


If the desired accuracy is not reached in the accelerated regime, then the global rate is given by the following
\begin{theorem}\label{thm:lsmooth global}
    Let~\cref{as:convexity,as:bounded-level-set,as:Lsmooth,as:hessian_smooth} hold. Then the iterates generated by~\cref{alg:cacuadgd} with $\alpha=0.5$ satisfy for all $k$ s.t. $\bigl(f(x^k)-f_\star\bigr)^{1/2} \le \frac{3}{2}\frac{LD^2}{\sqrt{HD^3}}$  the following
    \begin{equation*}
        f(x^k)-f_\star
         \le 
        \frac{27}{2} \frac{LD^2}{ k+LD^2 \cdot \Phi(f(x^0)-f_\star) }, 
    \end{equation*}
    where $\Phi(\xi) \coloneqq  \frac{9}{2} \frac{1}{\xi}
       + 
      \frac{6\sqrt{H}}{L\sqrt{D}} \frac{1}{\sqrt{\xi}}$.
\end{theorem}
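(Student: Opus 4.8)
The plan is to start from the second-regime recursion already isolated in Lemma~\ref{lem:two-regime}, namely inequality~\eqref{eq second stage},
\[
    f(x^{k+1}) - f_\star \le (f(x^k)-f_\star) - \frac{2}{9LD^2}\,(f(x^k)-f_\star)^2,
\]
which is valid for every index $k$ lying in the second stage. Writing $\delta_k := f(x^k)-f_\star$, this is a standard quadratically-decreasing recursion $\delta_{k+1}\le \delta_k - c\,\delta_k^2$ with $c=\tfrac{2}{9LD^2}$, whose textbook consequence is an $\mathcal O(1/k)$ bound of the form $\delta_k \le \tfrac{1}{ck + 1/\delta_0}$. I would first derive exactly this by the classical reciprocal trick: dividing through by $\delta_k\delta_{k+1}$ and using monotonicity ($\delta_{k+1}\le\delta_k$) to obtain $\tfrac{1}{\delta_{k+1}} \ge \tfrac{1}{\delta_k} + c$, then telescoping. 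The leading constant $\tfrac{27}{2}LD^2$ in the statement is exactly $1/c$ up to the factor arrangement, so matching it is a bookkeeping step.

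\textbf{The nonstandard part} is the additive offset $LD^2\cdot\Phi(\delta_0)$ inside the denominator, which carries two terms: a $\tfrac{9}{2}/\delta_0$ piece and a $\tfrac{6\sqrt H}{L\sqrt D}\,\delta_0^{-1/2}$ piece. The first of these is precisely the contribution $1/(c\,\delta_0)=\tfrac{9LD^2}{2}\cdot\tfrac{1}{\delta_0}$ from initializing the telescoped reciprocal at the \emph{start of the second stage}. The second, $\delta_0^{-1/2}$-flavored term, cannot come from the quadratic recursion alone; it must be inherited from the \emph{first} (cubic) stage, whose per-step decrease scales like $\delta_k^{3/2}$ rather than $\delta_k^2$. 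So my plan is to split the analysis at the stage boundary: let $k_0$ be the first index entering the second stage, bound the first-stage length or the accumulated progress using the cubic recursion~\eqref{eq first stage}, and feed the resulting starting value of $1/\delta_{k_0}$ into the second-stage telescoping. Concretely, I would telescope $\delta_j^{-1/2}$ across the first stage — the cubic recursion $\delta_{j+1}\le\delta_j - \tfrac{1}{3\sqrt{HD^3}}\delta_j^{3/2}$ yields, by the analogous reciprocal-of-square-root trick, a bound of the form $\delta_{k_0}^{-1/2}\ge \delta_0^{-1/2} + \tfrac{(\text{const})}{\sqrt{HD^3}}\,k_0$, and this is what manufactures the $\sqrt H/(L\sqrt D)$ coefficient after converting $\delta_{k_0}^{-1/2}$ into $\delta_{k_0}^{-1}$ at the handoff.

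\textbf{The main obstacle} I anticipate is the clean transfer across the stage boundary: one must turn a lower bound on $\delta_{k_0}^{-1/2}$ (natural output of the cubic stage) into a usable lower bound on $\delta_{k_0}^{-1}$ (natural input of the quadratic stage), and do so without losing the $k_0$-dependence that ultimately lets the two regimes be combined into the single denominator $k + LD^2\Phi(\delta_0)$ indexed by the global iteration counter. The trick is that at the threshold $\delta_{k_0}^{1/2}\le \tfrac{3}{2}\tfrac{LD^2}{\sqrt{HD^3}}$ defining stage two, one can bound $\delta_{k_0}^{-1}$ both by the constant $\tfrac{2}{9}(LD^2)^{-1}\cdots$ and by the accumulated first-stage count, and combining the two reciprocal telescopes (the $\delta^{-1}$ telescope of stage two started at $k_0$, plus the $\delta^{-1/2}$ telescope of stage one) so that the iteration counts add to the global $k$. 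I would keep the factor $\alpha=0.5$ fixed throughout (so the two decrease inequalities preceding the lemma apply verbatim), verify the monotonicity of $\delta_k$ that both telescopes require (guaranteed since every step is a genuine descent step under Assumption~\ref{as:bounded-level-set}), and finally collect constants to match $\tfrac{27}{2}$ and the exact form of $\Phi$.
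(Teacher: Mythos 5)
Your proposal is correct, but it takes a genuinely different route from the paper. The paper never splits the iterations into stages: it works with the single potential $\Phi(\xi)=\frac{9}{2}\frac{1}{\xi}+\frac{6\sqrt{H}}{L\sqrt{D}}\frac{1}{\sqrt{\xi}}$ and shows, via the elementary bounds $\frac{1}{v}-\frac{1}{u}\ge\frac{u-v}{u^{2}}$ and $\frac{1}{\sqrt{v}}-\frac{1}{\sqrt{u}}\ge\frac{u-v}{2u^{3/2}}$, that \emph{whichever} of the two decrease inequalities of \cref{lem:two-regime} holds at a given iteration, the potential gains at least $\frac{1}{LD^{2}}$; telescoping gives $\Phi(\xi_k)\ge\Phi(\xi_0)+\frac{k}{LD^{2}}$, and the regime condition $\xi_k\le\frac{9}{4}\frac{L^{2}D}{H}$ is used only once, at iterate $k$, to convert $\Phi(\xi_k)\le\frac{27}{2}\frac{1}{\xi_k}$ and read off the bound. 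Your plan instead runs two classical telescopes (the $\delta^{-1/2}$ one in the cubic stage, the $\delta^{-1}$ one in the quadratic stage) and stitches them at the crossing index $k_0$. This does work, and with the right bookkeeping it reproduces the exact constants: at the handoff you lower-bound $\delta_{k_0}^{-1}$ by the product of the threshold bound $\delta_{k_0}^{-1/2}\ge\frac{2\sqrt{H}}{3L\sqrt{D}}$ with the stage-one telescope $\delta_{k_0}^{-1/2}\ge\delta_0^{-1/2}+\frac{k_0}{6\sqrt{HD^{3}}}$ (which yields the $\sqrt{H}/(L\sqrt{D}\sqrt{\delta_0})$ term plus a $\frac{k_0}{9LD^{2}}$ term), and separately by monotonicity $\delta_{k_0}^{-1}\ge\delta_0^{-1}$; a convex combination of these two lower bounds (weight $\frac{1}{3}$ on the monotonicity bound), added to the stage-two telescope, gives precisely $\frac{1}{\delta_k}\ge\frac{2}{27LD^{2}}\bigl(k+LD^{2}\Phi(\delta_0)\bigr)$. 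One small correction to your accounting: the leading constant $\frac{27}{2}LD^{2}$ is \emph{not} ``exactly $1/c$ up to factor arrangement'' --- $1/c=\frac{9}{2}LD^{2}$, and the extra factor $3$ is the structural price of fitting both offset terms of $\Phi(\delta_0)$ into the same denominator as $k$, exactly the slack absorbed by the convex combination above. What each approach buys: the paper's unified potential argument is shorter, needs no stage index or handoff, and is reused verbatim for the relaxed-smoothness corollary; yours is built from textbook recursion lemmas and makes transparent where each term of $\Phi$ originates (the $1/\xi$ piece from the quadratic telescope, the $1/\sqrt{\xi}$ piece from the cubic one), at the cost of the delicate boundary transfer you correctly identified as the main obstacle.
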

\begin{remark}
    In practice, there is no iterates splinting due to the algorithm's adaptive nature. That means that the algorithm performs the best possible step enabling the cubic-model decrease even for later iterates.
\end{remark}

\begin{corollary}
    Let~\cref{as:convexity,as:bounded-level-set,as:Lsmooth,as:hessian_smooth} hold. Then the total number of iterations needed to reach $f(x^K)-f_\star\le\varepsilon$ satisfies
    \[
      K  =  \cO\left(\frac{L D^{2}}{\varepsilon} +
          \frac{\rbr{\sqrt{H D^{3}}+\sqrt{ f(x^0)-f_\star }}^2}{\sqrt{\varepsilon}}
      \right).
    \]
\end{corollary}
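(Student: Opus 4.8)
The plan is to assemble the final complexity from the two regime estimates already in hand: the accelerated $\cO(k^{-2})$ bound (the corollary immediately preceding \cref{thm:lsmooth global}), valid while the optimality gap is large, and the $\cO(k^{-1})$ bound of \cref{thm:lsmooth global}, valid once the gap is small. The structural fact I would invoke first is that the gap $f(x^k)-f_\star$ is monotonically nonincreasing, since every step of \cref{alg:cacuadgd} is a descent step by construction. Hence the threshold condition $(f(x^k)-f_\star)^{1/2}\le \tfrac32 LD^2/\sqrt{HD^3}$ that separates \eqref{eq first stage} from \eqref{eq second stage}, once met, persists. Writing $\varepsilon_1:=\tfrac94 L^2D/H$ for the square of this threshold and $A:=9\sqrt{HD^3}+3\sqrt{f(x^0)-f_\star}$, monotonicity lets me treat the run as an initial accelerated block (on which \eqref{eq first stage} applies) followed by a terminal slow block (on which \eqref{eq second stage} applies), so I can count the two phases separately and add.

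Next I would split on the target accuracy. If $\varepsilon\ge\varepsilon_1$, the method reaches $\varepsilon$ while still in the accelerated regime, so it suffices to invert the preceding corollary: solving $A^2/K^2\le\varepsilon$ gives $K=\cO(A/\sqrt{\varepsilon})$, which is the second term of the claim. If instead $\varepsilon<\varepsilon_1$, the iterates cross into the slow regime; here I would invert the estimate of \cref{thm:lsmooth global}, namely $\tfrac{27}{2}LD^2/\bigl(k+LD^2\Phi(f(x^0)-f_\star)\bigr)\le\varepsilon$, which rearranges to $k\ge \tfrac{27}{2}LD^2/\varepsilon-LD^2\Phi(f(x^0)-f_\star)$. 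Since the offset $LD^2\Phi(f(x^0)-f_\star)$ is nonnegative, this yields a total count $K=\cO(LD^2/\varepsilon)$, the first term of the claim.

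To produce a single bound valid for every $\varepsilon$ I would report the sum of the two estimates. When $\varepsilon\ge\varepsilon_1$ the accelerated term dominates and the nonnegative $LD^2/\varepsilon$ term only enlarges the bound; when $\varepsilon<\varepsilon_1$ the reverse holds, and the accelerated head start is in any case at most $A/\sqrt{\varepsilon_1}\le A/\sqrt{\varepsilon}$, so it is absorbed. In both cases
\[
  K \;=\; \cO\!\left(\frac{LD^2}{\varepsilon}+\frac{\sqrt{HD^3}+\sqrt{f(x^0)-f_\star}}{\sqrt{\varepsilon}}\right),
\]
and collecting constants gives the stated estimate.

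The main obstacle I anticipate is bookkeeping across the threshold rather than any delicate inequality. In particular, one must fix whether the index $k$ in \cref{thm:lsmooth global} is the global iteration counter (so that inverting its bound controls the total, not merely the slow-phase, count) and interpret the offset $LD^2\Phi(f(x^0)-f_\star)$ as correctly absorbing the accelerated phase so that no iterations are double-counted; this is exactly what licenses the clean two-term answer. Monotonicity of the gap is what permits treating the regimes as consecutive phases, so I would state and use it explicitly. I also note, per the remark following \cref{thm:lsmooth global}, that adaptivity typically prevents any such split in practice, so this estimate is a worst-case guarantee.
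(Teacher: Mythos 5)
Your assembly strategy is the right one, and in fact it is the only route available: the paper gives no standalone proof of this corollary, and its relaxed-smoothness analogue in the appendix is proved by exactly the reduction you describe (split the run into a consecutive accelerated phase and slow phase via monotonicity, invert the $\cO(k^{-2})$ corollary for the former and \cref{thm:lsmooth global} for the latter). Your bookkeeping is also handled correctly: each step of \cref{alg:cacuadgd} is a descent step, so the threshold condition persists once met; the threshold squared is indeed $\tfrac94 L^2D/H$; and, crucially, the $k$ in \cref{thm:lsmooth global} is the global counter (its proof telescopes the potential $\Phi$ over \emph{all} iterations, both regimes adding at least $1/(LD^2)$ per step), so the offset $LD^2\Phi(f(x^0)-f_\star)$ already absorbs the accelerated phase and no double counting occurs. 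The head-start estimate $A/\sqrt{\varepsilon_1}\le A/\sqrt{\varepsilon}$ is also correct.

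The genuine flaw is your final step, ``collecting constants gives the stated estimate.'' What your argument actually yields is
\[
  K \;=\; \cO\!\left(\frac{LD^2}{\varepsilon}+\frac{\sqrt{HD^3}+\sqrt{f(x^0)-f_\star}}{\sqrt{\varepsilon}}\right),
\]
whose second term carries the \emph{first} power of $\sqrt{HD^3}+\sqrt{f(x^0)-f_\star}$, whereas the statement carries its \emph{square}. These are not related by an absolute constant: they differ by the problem-dependent factor $\sqrt{HD^3}+\sqrt{f(x^0)-f_\star}$, which can be arbitrarily large or smaller than $1$, so your bound neither ``is'' the stated one up to constants nor implies it in general. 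Note that your first-power form is what genuinely follows from the paper's own ingredients: inverting $f(x^{k+1})-f_\star\le \rbr*{9\sqrt{HD^3}+3\sqrt{f(x^0)-f_\star}}^2/k^2$ gives $k\ge \rbr*{9\sqrt{HD^3}+3\sqrt{f(x^0)-f_\star}}/\sqrt{\varepsilon}$, and it is the dimensionally consistent one (an iteration count must be unitless, while $HD^3/\sqrt{\varepsilon}$ is not). The squared numerator in the corollary as stated (and in its relaxed-smoothness counterpart) appears to be an error in the paper rather than something your derivation should reproduce. The correct conclusion of your proof is therefore either to flag this mismatch and record the sharper first-power bound, or, if one insists on matching the printed statement, to note that it only follows under the additional (unjustified) assumption $\sqrt{HD^3}+\sqrt{f(x^0)-f_\star}\gtrsim 1$; asserting equivalence ``by collecting constants'' is not valid.
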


\subsection{Relaxed Smoothness}
Similarly to \emph{$(L_0,L_1)$-smoothness} originally introduced by \cite{zhang2020why} for twice differentiable functions, we allow the norm of the Hessian of the objective to increase linearly with the growth of square root of the optimality gap. We show that this assumption holds, for example, in logistic regression under separability. Importantly, it controls \emph{only} the directional growth of the Hessian.
\begin{assumption}[Directional $L_0, L_1$-smoothness]\label{as:relaxed}
    Function $f: \R^d \to \R$ is $L$-smooth, i.e., for all $x, y \in \R^d$ we have
    \begin{equation}
        \label{eq:relaxed}
        \frac{\inp{\nabla f(x^k)}{\nabla^2 f(x^k) \nabla f(x^k)}}{\|\nabla f(x)\|^2} \leq L_0 + L_1 \rbr{f(x) - f_\star}^{1/2}. 
    \end{equation}
\end{assumption}

\begin{remark}
    \cref{as:relaxed} is a relaxed version of the global condition
    \(\|\nabla^2 f(x)\|\le L_0 + L_1 \bigl(f(x)-f_\star\bigr)^{1/2}\).
\end{remark}

\begin{example}[Logistic function, $L_0=0$]\label{ex:dir-one}
Let $a\in\R^d\setminus\{0\}$, $s=\inp{a}{x}$, and $f(x)=\log(1+e^{-s})$. Then $f_\star=0$ and for all $x\in\R^d$,
\[
\frac{\inp{\nabla f(x)}{\nabla^2 f(x) \nabla f(x)}}{\norm{\nabla f(x)}^2}
 \le \frac{2}{3\sqrt{3}} \norm{a}^2 \bigl(f(x)-f_\star\bigr)^{1/2}.
\]
\end{example}

\begin{example}[Empirical logistic, $L_0=0$ under separability]\label{ex:dir-emp}

Let there exists $v\in\R^d$ and $\gamma>0$ such that $y_i\inp{a_i}{v}\ge\gamma$ for all $i$,
then for $f(x)=\frac1n\sum_{i=1}^n \log\bigl(1+e^{-y_i\inp{a_i}{x}}\bigr)$ one has $f_\star=0$ and for all $x\in\R^d$,
\[
    \frac{\inp{\nabla f(x)}{\nabla^2 f(x) \nabla f(x)}}{\norm{\nabla f(x)}^2}
     \le \frac{2}{3\sqrt{3}} \rbr*{\frac{1}{n}\sum_{i=1}^n \norm{a_i}^4}^{1/2} \bigl(f(x)-f_\star\bigr)^{1/2}.
\]
\end{example}

The proof of the following result mainly follows the arguments above, as the iterates split into cases where either vanishing curvature or standard smoothness holds.
\begin{corollary}
    Let~\cref{as:convexity,as:bounded-level-set,as:Lsmooth,as:relaxed} hold. Then the total number of iterations needed to reach $f(x^K)-f_\star\le\varepsilon$ satisfies
    \begin{equation*}
        K = \cO\left(\frac{L_0D^2}{\varepsilon} + \frac{\rbr*{L_1 D^2 +\sqrt{HD^3} + \sqrt{f_0 - f_*}}^2}{\sqrt{\varepsilon}} \right).
    \end{equation*} 
\end{corollary}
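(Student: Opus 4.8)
The plan is to collapse every iterate of \algname{CaCuAdGD} into one of two scalar recurrences for the optimality gap $\delta_k := f(x^k)-f_\star$, and then count how many steps of each type are needed to drive $\delta_k$ below $\varepsilon$. First I would invoke the per‑iterate dichotomy from \cref{sec:core} with $\alpha=0.5$: the step $x^{k+1}=x^k-\nabla f(x^k)/\sqrt{\max\{H_k,\widehat H\}\,\norm{\nabla f(x^k)}}$ produces either the cubic‑model decrease \eqref{eq:cubic-model decrease} or the quadratic‑model decrease \eqref{eq:quadratic-model decrease}, where $L_k=\inp{\nabla f(x^k)}{\nabla^2 f(x^k)\nabla f(x^k)}/\norm{\nabla f(x^k)}^2$ and $H_k\le H$. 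Using \cref{as:convexity,as:bounded-level-set} in the form $\norm{\nabla f(x^k)}\ge \delta_k/D$ (valid since monotonicity keeps $x^k\in\mathcal L_0$), both decreases become recurrences in $\delta_k$ alone:
\[
\delta_{k+1}\le \delta_k-\tfrac{1}{3\sqrt{HD^3}}\,\delta_k^{3/2}\quad(\text{cubic}),\qquad \delta_{k+1}\le \delta_k-\tfrac{2}{9L_kD^2}\,\delta_k^{2}\quad(\text{quadratic}).
\]

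The second step is to feed \cref{as:relaxed} into the quadratic branch. Writing $L_k\le L_0+L_1\delta_k^{1/2}$ and using $L_0+L_1\delta_k^{1/2}\le 2\max\{L_0,L_1\delta_k^{1/2}\}$, the quadratic recurrence splits by which term dominates: when $L_1\delta_k^{1/2}\le L_0$ it yields the slow recurrence $\delta_{k+1}\le \delta_k-\tfrac{1}{9L_0D^2}\delta_k^{2}$, and when $L_1\delta_k^{1/2}>L_0$ it yields a fast recurrence $\delta_{k+1}\le \delta_k-\tfrac{1}{9L_1D^2}\delta_k^{3/2}$ of the same $3/2$ order as the cubic branch. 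Hence every index $k$ lands in a fast class $\mathcal K_f$, on which $\delta_{k+1}\le\delta_k-c_f\delta_k^{3/2}$ with $c_f=\min\{\tfrac{1}{3\sqrt{HD^3}},\tfrac{1}{9L_1D^2}\}$ (cubic branch together with the $L_1$‑dominated quadratic sub‑case), or a slow class $\mathcal K_s$, on which $\delta_{k+1}\le\delta_k-c_s\delta_k^{2}$ with $c_s=\tfrac{1}{9L_0D^2}$.

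The third step is the counting argument. Because \algname{CaCuAdGD} is monotone, $\delta_k$ is nonincreasing, so all increments $\delta_{k+1}^{-1/2}-\delta_k^{-1/2}$ and $\delta_{k+1}^{-1}-\delta_k^{-1}$ are nonnegative. I would telescope $\delta^{-1/2}$ over the fast steps — each contributing at least $c_f/2$ via $(1-t)^{-1/2}\ge 1+t/2$ — while discarding the nonnegative slow increments, to obtain $|\mathcal K_f|=\cO\rbr*{\tfrac{1}{c_f\sqrt\varepsilon}}$; symmetrically, telescoping $\delta^{-1}$ over the slow steps gives $|\mathcal K_s|=\cO\rbr*{\tfrac{1}{c_s\varepsilon}}=\cO\rbr*{\tfrac{L_0D^2}{\varepsilon}}$. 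Summing, and using $1/c_f=\max\{3\sqrt{HD^3},9L_1D^2\}\le 3\sqrt{HD^3}+9L_1D^2$ together with the accelerated estimate \cite[Lemma A.1]{nesterov2019inexact} applied to the fast phase to absorb the initialization term $\sqrt{f_0-f_\star}$, yields $K=\cO\rbr*{\tfrac{L_0D^2}{\varepsilon}+\tfrac{(L_1D^2+\sqrt{HD^3}+\sqrt{f_0-f_\star})^2}{\sqrt\varepsilon}}$.

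The hard part will be that the fast and slow steps need not form two consecutive phases: the regime is chosen per iterate by the HVP certificate, and the $L_0/L_1$ sub‑case depends on the current gap, so the classes can interleave arbitrarily. The device that rescues the argument is exactly the monotonicity of $\delta_k$, which guarantees the telescoped increments of $\delta^{-1/2}$ and $\delta^{-1}$ are nonnegative and hence can be bounded class‑by‑class without tracking their order. A secondary technical point is the large‑gap behavior of the $3/2$‑order recurrence, where the naive $\delta^{-1/2}$ telescoping needs $c_f\delta_k^{1/2}<1$; handling this cleanly is precisely what the cited accelerated lemma does, and it is the source of the $\sqrt{f_0-f_\star}$ contribution inside the fast term.
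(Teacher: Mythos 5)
Your proposal is correct, and its first half coincides with the paper's own argument: the paper also splits each iterate according to whether $L_0$ or $L_1\bigl(f(x^k)-f_\star\bigr)^{1/2}$ dominates in \cref{as:relaxed}, turning every step into either a $3/2$-order recurrence with constant of order $\bigl(\sqrt{HD^3}+L_1D^2\bigr)^{-1}$ or a quadratic-order recurrence with constant of order $\bigl(L_0D^2\bigr)^{-1}$. (The mechanism differs cosmetically: the paper forces the cubic-type decrease by exhibiting the admissible regularization $M=\max(H,9L_1^2D)$ in the certificate \eqref{eq:certificate}, while you bound $L_k$ inside the quadratic-model decrease \eqref{eq:quadratic-model decrease}; the resulting recurrences are identical.) Where you genuinely diverge is the endgame. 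The paper never counts the two classes separately: it substitutes $H\mapsto\max(H,9L_1^2D)$ and $L\mapsto 2L_0$ into the proof of \cref{thm lsmoothglobal ap}, whose single potential $\Phi(\xi)=\tfrac{9}{2\xi}+\tfrac{6\sqrt H}{L\sqrt D}\xi^{-1/2}$ increases by at least $1/(LD^2)$ per iteration in \emph{either} regime; inverting $\Phi$ yields an anytime bound on $f(x^k)-f_\star$, from which the complexity follows. You instead run two independent telescopes, $\delta^{-1/2}$ over fast steps and $\delta^{-1}$ over slow steps, discarding the cross-increments (nonnegative by monotonicity) and stopping at the first index with $\delta_k\le\varepsilon$. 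Both devices handle arbitrary interleaving of regimes; the paper's buys a pointwise rate valid at every $k$, while yours is more elementary and in fact gives a slightly tighter fast-phase count, $\cO\bigl((L_1D^2+\sqrt{HD^3})/\sqrt{\varepsilon}\bigr)$ with no $\sqrt{f_0-f_\star}$ term and no square, which of course still implies the stated $\cO$ bound. One small correction to your closing remark: the caveat about needing $c_f\delta_k^{1/2}<1$ does not require the Nesterov lemma at all. In your stopping-time telescoping every counted index $k$ has $\delta_{k+1}>\varepsilon>0$, whereas $c_f\delta_k^{1/2}\ge 1$ would force $\delta_{k+1}\le\delta_k\bigl(1-c_f\delta_k^{1/2}\bigr)\le 0$; so the condition holds automatically, and the cited lemma --- whose $\bigl(1+\xi_0^{1/2}\bigr)$ factor is precisely what injects the $\sqrt{f_0-f_\star}$ term into the paper's derivation --- is dispensable in your route.
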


\subsection{Comparison with \algname{AdGD}}
\algname{CaCuAdGD} uses gradient steps $x^{k+1}=x^k-\lambda_k \nabla f(x^k)$ with
\[
\lambda_k
=\min\Bigl\{ \frac{16\alpha^2}{9 L_k} , \frac{1}{\sqrt{ H_k \|\nabla f(x^k)\| }} \Bigr\},
\]
while \algname{AdGD} employs
\[
\lambda_k
=\min\Bigl\{ \frac{1}{2 L_{k-1}} , \sqrt{1+\theta_{k-1}} \lambda_{k-1} \Bigr\},
\qquad
\theta_{k-1}:=\frac{\lambda_{k-1}}{\lambda_{k-2}}.
\]

\textbf{Step size growth.} When the local smoothness improves sharply (i.e., $L_k\ll L_{k-1}$), \algname{AdGD} can increase $\lambda_k$ only multiplicatively via $\sqrt{1+\theta_{k-1}}$, which may take several iterations to reach the new scale $L_k$. In contrast, \algname{CaCuAdGD} adapts in a single step $H_k\|\nabla f(x^k)\|$ vanishes near a solution, so the step is governed by $L_k$ (fast expansion to the local scale). Far from the solution, when $H_k \|\nabla f(x^k)\|$ is large, the cubic-model decrease \eqref{eq:cubic-model decrease} is guaranteed
which corresponds to a local $O(1/k^2)$ decay.

\textbf{Choice of $\alpha$.} The parameter $\alpha\in(0,1)$ control the trade-off between the two regimes. Smaller $\alpha$ tightens $\tfrac{16\alpha^2}{9L_k}$ and triggers the quadratic regime earlier (more conservative). Whereas larger $\alpha$ loosens it, making the steps larger and lets the cubic regime act more often. At the same time, the decrease in the quadratic regime scales with $\alpha(1-\alpha)$ \eqref{eq:quadratic-model decrease}, so increasing $\alpha$ beyond $1/2$ reduces this per-step guarantee. Because the time spent in each regime is problem dependent, selecting $\alpha$ a priori is nontrivial. Our analysis allows any $\alpha\in(0,1)$; in practice, values slightly below $3/4$ (i.e. 0.7) perform consistently well on various problems, making the method effectively parameter free.
Also, in the quadratic regime, the allowed step (the constant multiplying $1/L_k$) is larger for \algname{CaCuAdGD} ($\approx 0.87$), compared with \algname{AdProxGD}~\cite{malitsky2024adaptive} ($1/\sqrt{2}$) and \algname{AdGD} ($0.5$).

\section{Stochastic Extension}\label{sec:stoch}
In this section we consider 
\[
  \min_{x\in\R^d} f(x) = \E_\xi f_\xi(x)  ,
\]
particularly, a finite sum minimization $f(x) = \tfrac{1}{n}\sum_{i=1}^n f_i(x)$
where each $f_i:\R^d\to\R$ is a per-sample loss.
Here, $\xi$ denotes a (multi)set of sampled indices $S\subseteq[n]$ (a mini-batch), so $f_\xi(x)\equiv f_S(x)=\tfrac{1}{|S|}\sum_{i\in S} f_i(x)$.

\begin{assumption}\label{as:stoch}
 Stochastic gradient $g(x, \xi)$ is an unbiased estimator of $\nabla\f(x)$ with bounded variance, i.e., $\E_{\xi} [\g(x, \xi)] = \nabla\f(x)$ and for $\sigma_g \geq 0$
    \begin{equation}  \label{eq:xi-var}
    \textstyle
        \E_{\xi} \norm*{ g(x, \xi) -  \nabla\f(x)}^2 \leq \sigma_g^2,
    \end{equation}

    stochastic Hessian $H(x, \xi)$ has bounded third moment
    \begin{equation}  \label{eq:hess-xi-var}
        \E_{\xi}\norm{H(x, \xi) - \nabla^2 f(x)}^3 \le \sigma_H^3.
    \end{equation}
    Moreover, $f_\xi$ is $L$-smooth, i.e., $\forall x \in \R^d$
	\begin{equation}\label{eq:lipschitzness}\textstyle
		\norm{H(x, \xi)} \leq L.
	\end{equation}
     Also, $f$ and $f_\xi$ $2H$-smooth Hessian , i.e., $\forall x, y \in \R^d$
    \begin{equation}\label{eq:hesslipschitzness}\textstyle
		f_{(\xi)}\rbr{x} \le f_{(\xi)}\rbr{y} + \inp*{\nabla f_{(\xi)}\rbr{y}}{x - y}  + \frac12 \inp*{\nabla^2 f_{(\xi)}\rbr{y}\rbr{x - y}}{x - y}+ \frac{H}{3}\norm*{x - y}^3. 
	\end{equation}
\end{assumption}

These assumptions are standard~\cite{gorbunov2020unified,chayti2023unified}.

\begin{remark}\label{rem:hgp}
    In the algorithm we \textit{do not} form $H(x,\xi)$ explicitly; we only require stochastic Hessian--gradient products $H(x,\xi) g(x,\xi)$. Since, to the best of our knowledge, there is no standard assumption tailored exactly to this product, we use the standard third-moment bound above. However, our analysis also suffices under the following non-standard second-moment bound:
    \[
      \E_\xi \left[\|(\nabla^2 f(x)-H(x,\xi)) g(x,\xi)\|^2\right] \le \sigma_P^2.
    \]
\end{remark}

For brevity we write $g^k \coloneqq g(x^k,\xi_k)$ and $H^k \coloneqq H(x^k,\xi_k)$. The stochastic extension is given by the following
\begin{theorem}\label{thm:cacusgd}
    Let \cref{as:stoch} holds. Assume that $\widehat L \ge L$ and $\widehat H \ge H$. Then the method (\algname{CaCuSGD})
    \[
        x^{k+1} = x^k - \frac{g^k}{\sqrt{M_k\norm{g^k}}}, \quad M_k \coloneqq
        \begin{cases}
            \widehat L^2 {\norm{g^k}}^{-1}, & 
            \text{if}\quad {4 \inp{g^k}{H^k g^k}^{2}}{\norm{g^k}^{-5}} \ge \widehat H,\\
            \widehat H, & 
            \text{otherwise,}
        \end{cases} \quad \text{satisfies}
    \]
    \begin{align*}
        &\text{ (i) } \E f(x^{k+1}) \le \E f(x^k)
        -\frac{1}{4\widehat L}\E\|\nabla f(x^k)\|^2
        +\Big(\frac{2L}{\widehat L^2}+\frac{2L^2}{3\widehat L^3}\Big) \sigma_g^2
        +\frac{\sigma_H^3}{6 \widehat H^2},  
        &\text{if}\quad {4 \inp{g^k}{H^k g^k}^{2}}{\norm{g^k}^{-5}} \ge \widehat H,
        \\
        &\text{(ii) } \E f(x^{k+1})
        \le
        \E f(x^k)
        -\frac{1}{3\sqrt{\widehat H}} \E\sbr*{\|\nabla f(x^k)\|^{3/2}}
        +\frac{7}{4\sqrt{\widehat H}}\sigma_g^{3/2}
        +\frac{32}{3 \widehat H^2} \sigma_H^{3}, 
        &\text{otherwise}.
    \end{align*}
    
\end{theorem}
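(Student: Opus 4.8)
The plan is to derive both inequalities from a single master bound: apply the cubic upper bound \eqref{eq:hesslipschitzness} with $y=x^k$ and $x=x^{k+1}=x^k-\delta_k$, where $\delta_k=g^k/\sqrt{M_k\|g^k\|}$, so that $\|\delta_k\|^2=\|g^k\|/M_k$ and $\|\delta_k\|^3=(\|g^k\|/M_k)^{3/2}$. Substituting the step gives
\[
    f(x^{k+1})\le f(x^k)-\frac{\inp{\nabla f(x^k)}{g^k}}{\sqrt{M_k\|g^k\|}}+\frac{\inp{g^k}{\nabla^2 f(x^k)g^k}}{2M_k\|g^k\|}+\frac{H}{3}\Bigl(\frac{\|g^k\|}{M_k}\Bigr)^{3/2}.
\]
Everything then reduces to bounding these three terms in each regime and taking $\E_{\xi_k}[\,\cdot\mid x^k]$. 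Two difficulties, which I address up front, pervade both cases: (a) $M_k$ and the regime indicator are measurable functions of the same noise $g^k,H^k$, so the cancellation $\E\inp{\nabla f(x^k)}{e}=0$ for $e:=g^k-\nabla f(x^k)$ is \emph{not} available after restricting to a branch, forcing every cross term to be majorized pathwise before taking expectations; and (b) the true Hessian appears in the quadratic term whereas the regime test uses the stochastic $H^k$, so I split $\nabla^2 f(x^k)=H^k+E_k$ with $E_k:=\nabla^2 f(x^k)-H^k$ and route $E_k$ into the stated $\sigma_H$ terms.

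For case (ii) ($M_k=\widehat H$) I rewrite the first-order term via $\inp{\nabla f(x^k)}{g^k}=\|g^k\|^2-\inp{e}{g^k}$, producing the clean decrease $-\|g^k\|^{3/2}/\sqrt{\widehat H}$ plus a noise term $\inp{e}{g^k}/\sqrt{\widehat H\|g^k\|}$. In the quadratic term the regime condition $4\inp{g^k}{H^k g^k}^2\|g^k\|^{-5}<\widehat H$ gives $\abs{\inp{g^k}{H^k g^k}}<\tfrac12\sqrt{\widehat H}\|g^k\|^{5/2}$, which together with $\frac{H}{3}(\cdots)^{3/2}\le\|g^k\|^{3/2}/(3\sqrt{\widehat H})$ (using $H\le\widehat H$) collapses the three $\|g^k\|^{3/2}$ contributions to a net negative coefficient. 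The two remainders $\inp{e}{g^k}/\sqrt{\widehat H\|g^k\|}\le\|e\|\|g^k\|^{1/2}/\sqrt{\widehat H}$ and $\inp{g^k}{E_k g^k}/(2\widehat H\|g^k\|)\le\|E_k\|\|g^k\|/(2\widehat H)$ are split by Young's inequality with conjugate exponents $(\tfrac32,3)$ and $(3,\tfrac32)$, the split parameters tuned so that the $\|g^k\|^{3/2}$ leftovers are absorbed into the negative term while the pure-noise parts become $\tfrac{7}{4\sqrt{\widehat H}}\|e\|^{3/2}$ and $\tfrac{32}{3\widehat H^2}\|E_k\|^3$. Taking expectation and using $\E\|E_k\|^3\le\sigma_H^3$, the Jensen bounds $\E\|e\|^{3/2}\le(\E\|e\|^2)^{3/4}\le\sigma_g^{3/2}$ and (by convexity) $\E\|g^k\|^{3/2}\ge\|\nabla f(x^k)\|^{3/2}$ converts the leading term into $-\tfrac{1}{3\sqrt{\widehat H}}\|\nabla f(x^k)\|^{3/2}$.

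For case (i) ($M_k=\widehat L^2/\|g^k\|$) the step reduces to plain SGD, $x^{k+1}=x^k-g^k/\widehat L$, and the master bound becomes $f(x^{k+1})\le f(x^k)-\inp{\nabla f(x^k)}{g^k}/\widehat L+\inp{g^k}{\nabla^2 f(x^k)g^k}/(2\widehat L^2)+H\|g^k\|^3/(3\widehat L^3)$. The decisive observation is that the regime condition, combined with $\abs{\inp{g^k}{H^k g^k}}\le\|H^k\|\|g^k\|^2\le L\|g^k\|^2$, forces $\tfrac12\sqrt{\widehat H}\|g^k\|^{5/2}\le L\|g^k\|^2$, i.e. $\|g^k\|\le 4L^2/\widehat H$; this tames the otherwise-uncontrolled cubic remainder, bounding it by a multiple of $\|g^k\|^2$. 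I then bound the quadratic term by $L\|g^k\|^2/(2\widehat L^2)+\|E_k\|\|g^k\|^2/(2\widehat L^2)$, treat the noisy inner product $\inp{\nabla f(x^k)}{g^k}$ by Young's inequality (not by $\E e=0$, because of difficulty (a)), and use $\E\|g^k\|^2\le\|\nabla f(x^k)\|^2+\sigma_g^2$. Simplifying coefficients via $\widehat L\ge L$ and $\widehat H\ge H$ keeps the net $\|\nabla f(x^k)\|^2$ coefficient at $-\tfrac{1}{4\widehat L}$, aggregates the gradient noise into $\bigl(\tfrac{2L}{\widehat L^2}+\tfrac{2L^2}{3\widehat L^3}\bigr)\sigma_g^2$, and—after a final Young split of the $E_k$ remainder—yields $\tfrac{\sigma_H^3}{6\widehat H^2}$.

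I expect the main obstacle to be precisely difficulty (a): since the active branch and the stepsize are functions of $(g^k,H^k)$, the zero-mean cancellation is unavailable conditionally on a regime, so each cross term must be dominated deterministically before expectations are taken. Keeping these majorizations tight enough that the $\|g^k\|^{3/2}$ (resp. $\|g^k\|^2$) leftovers remain below the leading negative coefficient—so the clean constants $-\tfrac{1}{3\sqrt{\widehat H}}$ and $-\tfrac{1}{4\widehat L}$ survive—is the delicate bookkeeping. The gradient-norm bound $\|g^k\|\le 4L^2/\widehat H$ in case (i) and the exact cancellation of the leading curvature term by the regime test in case (ii) are what make this bookkeeping go through.
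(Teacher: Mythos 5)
In case (i) your plan cannot deliver the stated inequality, and the obstruction is exactly the constraint you impose on yourself in ``difficulty (a)''. The defining feature of bound (i) is that the gradient-noise term $\bigl(\tfrac{2L}{\widehat L^2}+\tfrac{2L^2}{3\widehat L^3}\bigr)\sigma_g^2$ decays like $1/\widehat L^2$ while the decrease term $-\tfrac{1}{4\widehat L}\E\norm{\nabla f(x^k)}^2$ decays only like $1/\widehat L$; this separation comes exclusively from the zero-mean cancellation $\E\bigl[\inp{\nabla f(x^k)}{g^k}\mid x^k\bigr]=\norm{\nabla f(x^k)}^2$, which the paper's proof does use (it takes the full conditional expectation inside the branch and only then invokes unbiasedness). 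If instead you majorize the cross term pathwise, then writing $\inp{\nabla f(x^k)}{g^k}=\norm{\nabla f(x^k)}^2+\inp{\nabla f(x^k)}{e}$ with $e=g^k-\nabla f(x^k)$ and applying Cauchy--Schwarz plus Young with parameter $\beta$ produces $+\tfrac{\beta}{2\widehat L}\norm{\nabla f(x^k)}^2+\tfrac{1}{2\beta\widehat L}\norm{e}^2$; keeping the net $\norm{\nabla f(x^k)}^2$ coefficient at $-\tfrac{1}{4\widehat L}$ forces $\beta\le 3/2$, hence a noise contribution of at least $\tfrac{1}{3\widehat L}\sigma_g^2$. For $\widehat L\gg L$ this is strictly larger than the stated $O(L\sigma_g^2/\widehat L^2)$ term, so your route proves a strictly weaker statement and destroys precisely the property the paper emphasizes (larger $\widehat L$ lowers the noise floor). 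A secondary issue: you claim $\widehat L\ge L$ suffices to keep $-\tfrac{1}{4\widehat L}$; even \emph{with} the cancellation the paper's own proof needs $\widehat L>3L$ (stated mid-proof, and inconsistent with the theorem's hypothesis), and with your additional $+\tfrac{\beta}{2\widehat L}\norm{\nabla f(x^k)}^2$ and the factor-two inflation from $\norm{g^k}^2\le 2\norm{\nabla f(x^k)}^2+2\norm{e}^2$, the coefficient cannot even be kept negative under $\widehat L\ge L$ alone.

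Your write-up is also internally inconsistent with premise (a): the steps $\E\norm{g^k}^2\le\norm{\nabla f(x^k)}^2+\sigma_g^2$ (your case (i)) and $\E\norm{g^k}^{3/2}\ge\norm{\nabla f(x^k)}^{3/2}$ (your case (ii)) \emph{are} the conditional-unbiasedness cancellation you declared unavailable; if conditioning on the regime invalidated $\E[e\mid x^k]=0$, it would equally invalidate these, and indeed also the moment bounds $\E\norm{e}^2\le\sigma_g^2$ and $\E\norm{\nabla^2 f(x^k)-H^k}^3\le\sigma_H^3$ restricted to a branch. The consistent reading---the one under which the paper's proof (and the theorem) makes sense---is per-branch: each bound is established for the corresponding deterministic update rule, unbiasedness is then legitimately available, and this is also why the paper declines to telescope across regimes. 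Your conditioning concern is a fair criticism of the paper's rigor, but the remedy is to adopt the per-branch reading, not to forgo the cancellation. Apart from this, your case (ii) is essentially the paper's argument: the regime test dominating $\inp{g^k}{H^k g^k}$, the same tuned Young splits yielding $\tfrac{7}{4\sqrt{\widehat H}}\sigma_g^{3/2}$ and $\tfrac{32}{3\widehat H^2}\sigma_H^3$, and the conversion to $\norm{\nabla f(x^k)}^{3/2}$ (where the paper's \cref{lem:three-halves-split} is the unbiasedness-free substitute for your Jensen step); likewise your bound $\norm{g^k}\le 4L^2/\widehat H$ taming the cubic remainder in case (i) is the same use of the regime test as in the paper.
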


Telescoping these one-step bounds is the only remaining step to obtain the standard-form result. However, because the iterates may switch regimes and the stochastic error terms mix, we state the per-iteration bounds as above.

The two regimes mirror the stochastic decrease results for standard \algname{SGD}~\cite{gorbunov2020unified} and stochastic \algname{CRN}~\cite{chayti2023unified}. The cubic phase provide irreducible but lower-power variance floor $\sigma_g^{3/2}$, while the quadratic phase reproduces the \algname{SGD} behavior: increasing $\widehat L$ (i.e., using a smaller stepsize) reduces the variance-driven term, enabling convergence to arbitrary accuracy. In practice, the cubic regime shows a faster transient toward the noise floor.

\section{Experiments}

\textbf{Logistic regression.} Our first experiment concerns the logistic regression problem with $\ell_2$ regularization:
\begin{equation}
    \min_{x\in\R^d} \frac{1}{n}\sum_{i=1}^n \left(-b_i\log(\sigma(a_i^\top x)) - (1-b_i)\log(1-\sigma(a_i^\top x)) \right) + \frac{\ell}{2}\|x\|^2,
    \label{eq:logreg}
\end{equation}
where $\sigma\colon\R\to (0, 1)$ is the sigmoid function, $\mA=(a_{ij})\in\R^{n\times d}$ is the matrix of features, and $b_i\in\{0, 1\}$ is the label of the $i$-th sample.
We use the `covtype', `w8a' and `mushrooms' datasets, and set $\ell=0$ or $\ell=10^{-7}$ to make the problem ill-conditioned. To set $H$ (if applicable), we upper bound the Lipschitz Hessian constant as $\sup_{x\in\R^d}\|\nabla^3 f(x)\|\le \frac{1}{6\sqrt{3}}\max_{i}\|a_i\| \|\mA\|^2$. This estimate is not tight, which causes non-adaptive methods to converge very slowly. 

\textbf{Log-sum-exp.} In our second experiment, we consider a significantly more ill-conditioned problem of minimizing
\[
    \min_{x\in\R^d} \rho\log\left(\sum_{i=1}^n \exp\left(\frac{a_i^\top x - b_i}{\rho}\right)\right) , 
\]
where $a_1,\dotsc, a_n\in\mathbb{R}^d$ are some vectors and $\rho, b_1,\dotsc, b_n$ are scalars. This objectives serves as a smooth approximation of function $\max\{a_1^\top x-b_1,\dotsc, a_n^\top x - b_n\}$, with $\rho>0$ controlling the tightness of approximation. We set $n=500$, $d=200$ and randomly generate $a_1,\dotsc, a_n$ and $b_1,\dotsc, b_n$. After that, we run our experiments for several choices of $\rho$, namely $\rho\in\{0.75, 0.5, 0.25, 0.1, 0.05\}$. The values are specified in brackets on the plots.

Our implementation is available at \url{https://github.com/nazya/CaCuTe} and builds on \url{https://github.com/konstmish/opt_methods}.

\subsection{Regularized Newton Methods (non adaptive)}

Following~\cite{mishchenko2023regularized}, we compare \algname{CaCuN} (\cref{alg:cacun}) and \algname{AccCaCuN} (\cref{alg:acccacun}) with \algname{Cubic} Newton~\cite{nesterov2006cubic}, its accelerated modification \algname{AccCubic}~\cite{nesterov2008accelerating} and global regularized Newton \algname{RegNewton}~\cite{mishchenko2023regularized}
The results are presented in \cref{fig:covtypecacun,fig:w8acacun,fig:mushroomscacun,fig:logsumexpcacun}.

\begin{figure}[H]
    \centering
    \begin{minipage}[htp]{0.24\textwidth}
        \centering
        \includegraphics[width=1\linewidth]{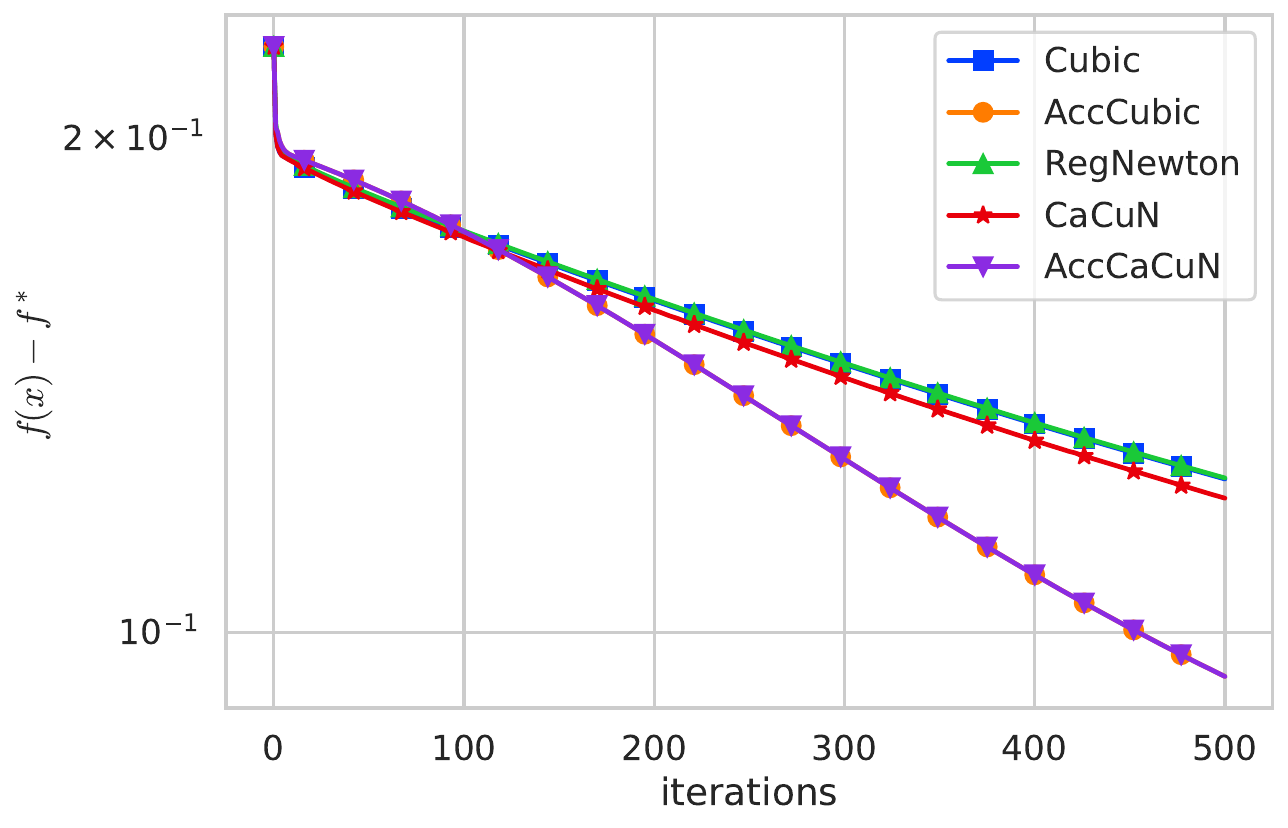}
        \label{fig:covtype-cacun-i}
        \centering
        \includegraphics[width=1\linewidth]{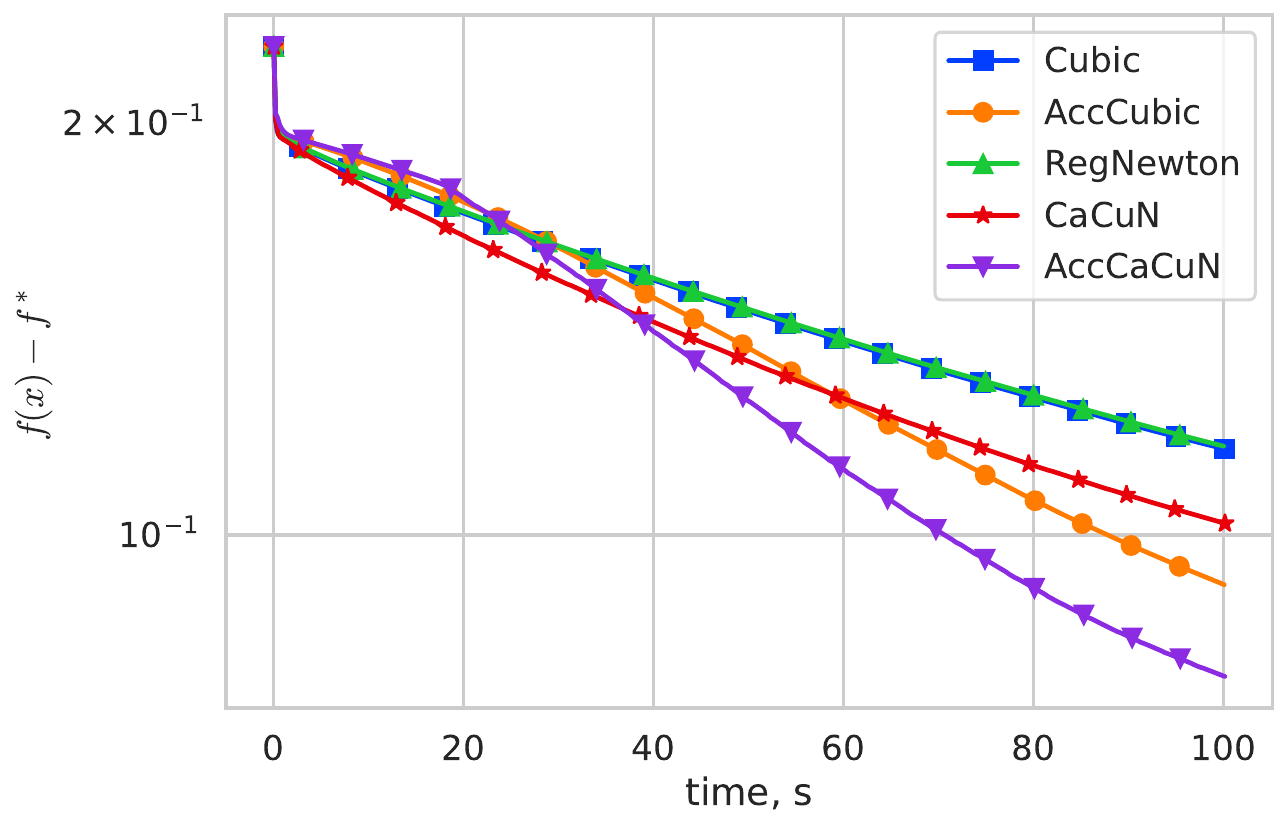}
        \caption{covtype}
        \label{fig:covtypecacun}
    \end{minipage}
    \begin{minipage}[htp]{0.228\textwidth}
        \centering
        \includegraphics[width=1\linewidth]{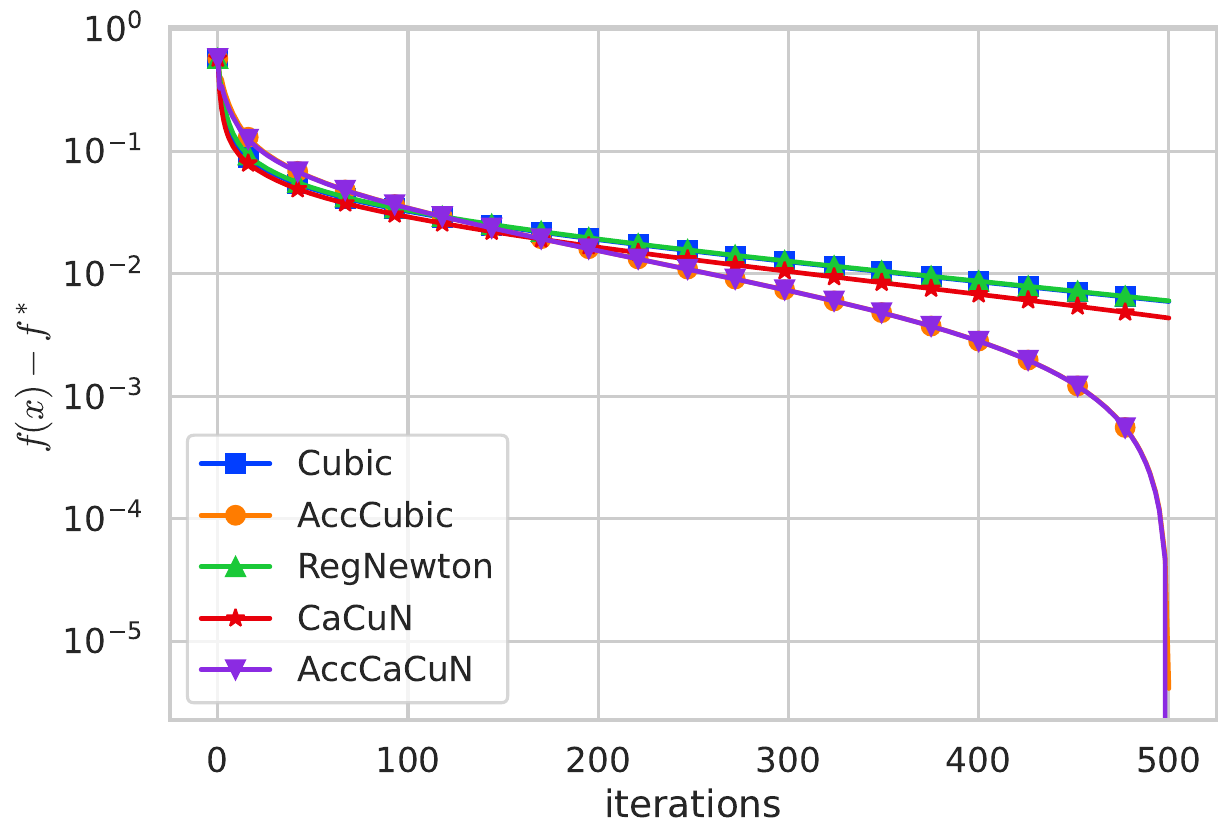}
        \centering
        \includegraphics[width=1\linewidth]{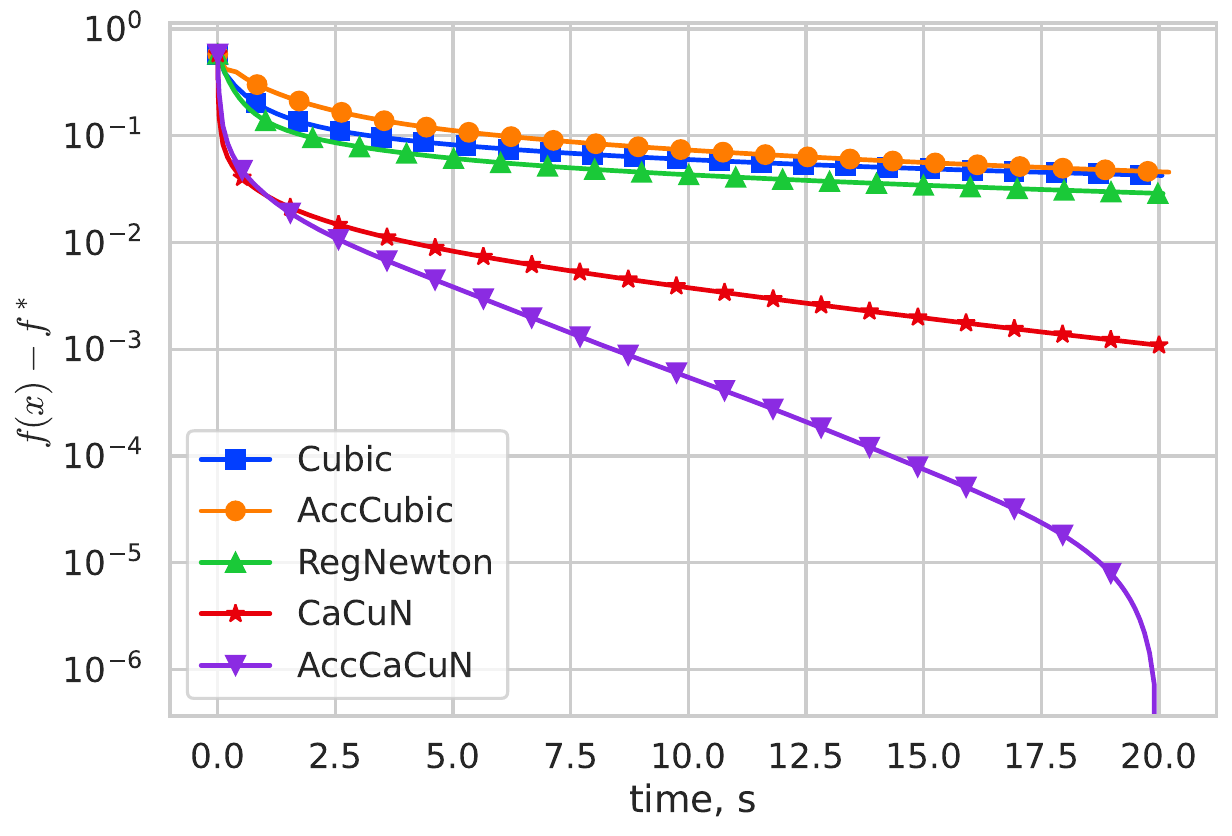}
        \caption{w8a}
        \label{fig:w8acacun}
    \end{minipage}
    \centering
    \begin{minipage}[htp]{0.23\textwidth}
        \centering
        \includegraphics[width=1\linewidth]{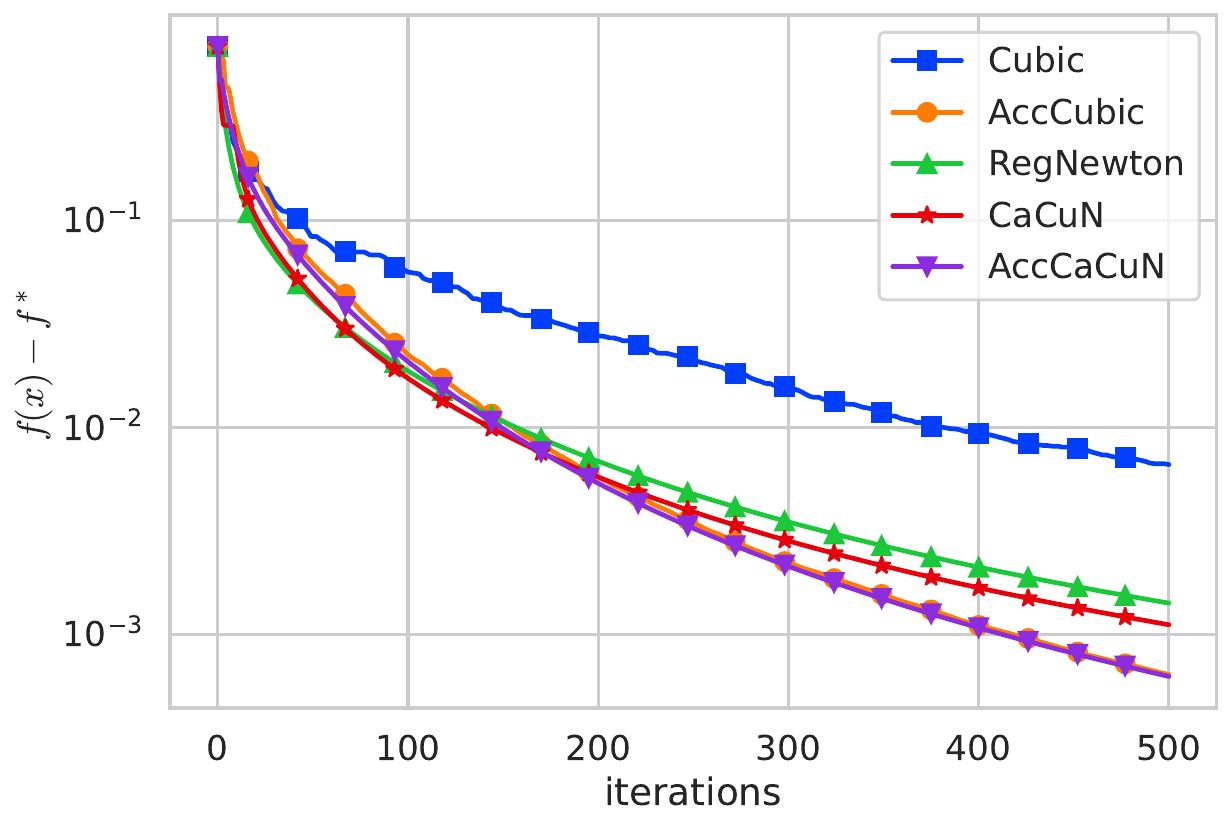}
        \centering
        \includegraphics[width=1\linewidth]{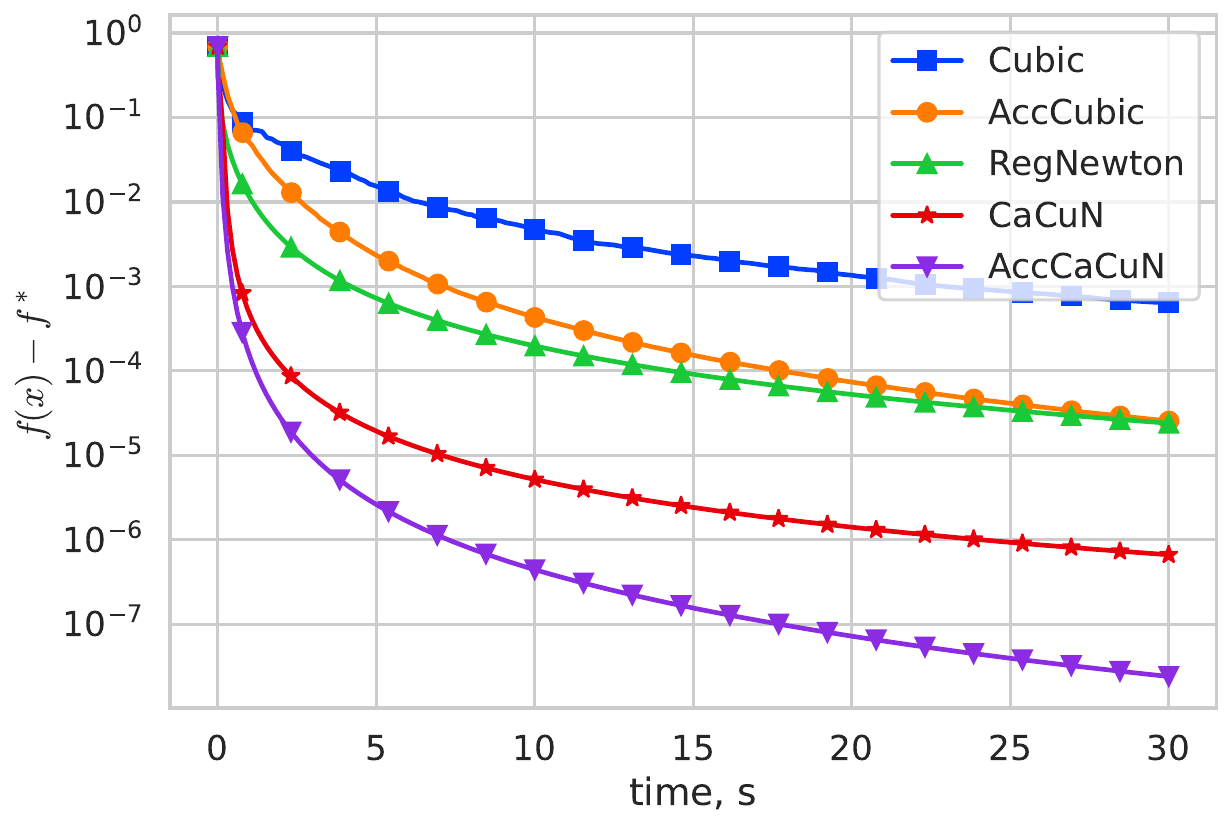}
        \caption{mushrooms}
        \label{fig:mushroomscacun}
    \end{minipage}
    \centering
    \begin{minipage}[htp]{0.228\textwidth}
        \centering
        \includegraphics[width=1\linewidth]{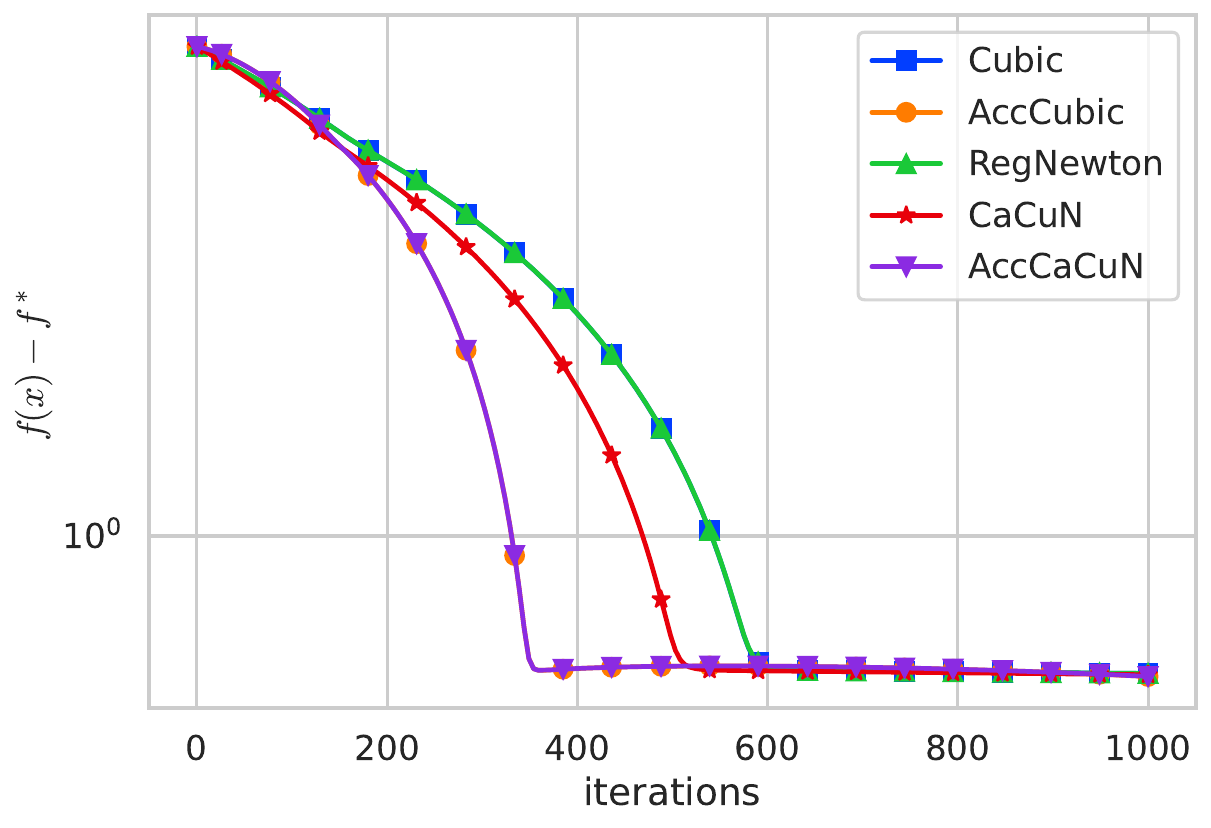}
        \centering
        \includegraphics[width=1\linewidth]{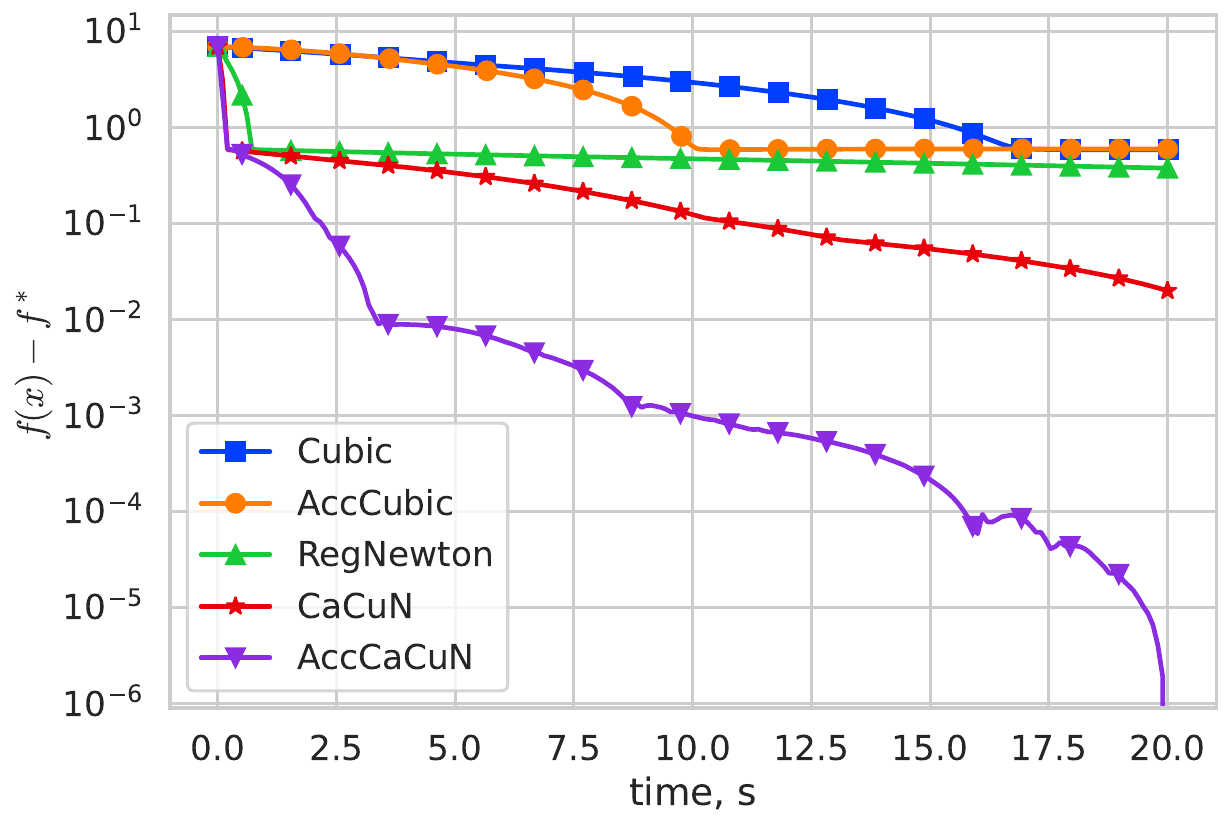}
        \caption{logsumexp (0.05)}
        \label{fig:logsumexpcacun}
    \end{minipage}
\end{figure}

The proposed methods mirror the iteration performance of their counterparts but show significantly better wall-clock performance.

\subsection{Regularized Newton Methods (adaptive)}
The adaptive schemes, in contrast, converge after a small number of iterations. Since, cubic Newton with a binary search in regularization is many times slower than \algname{AdaN}~\cite{mishchenko2023regularized} we compare our adaptive \algname{CaCuAdaN} and (\algname{CaCuAdaN+} algorithms only with \algname{AdaN} and \algname{AdaN+} from~\cite{mishchenko2023regularized}. The results are presented in \cref{fig:covtype,fig:w8a,fig:mushrooms,fig:logsumexp05}.

\begin{figure}[H]
    \centering
    \begin{minipage}[htp]{0.24\textwidth}
        \centering
        \includegraphics[width=1\linewidth]{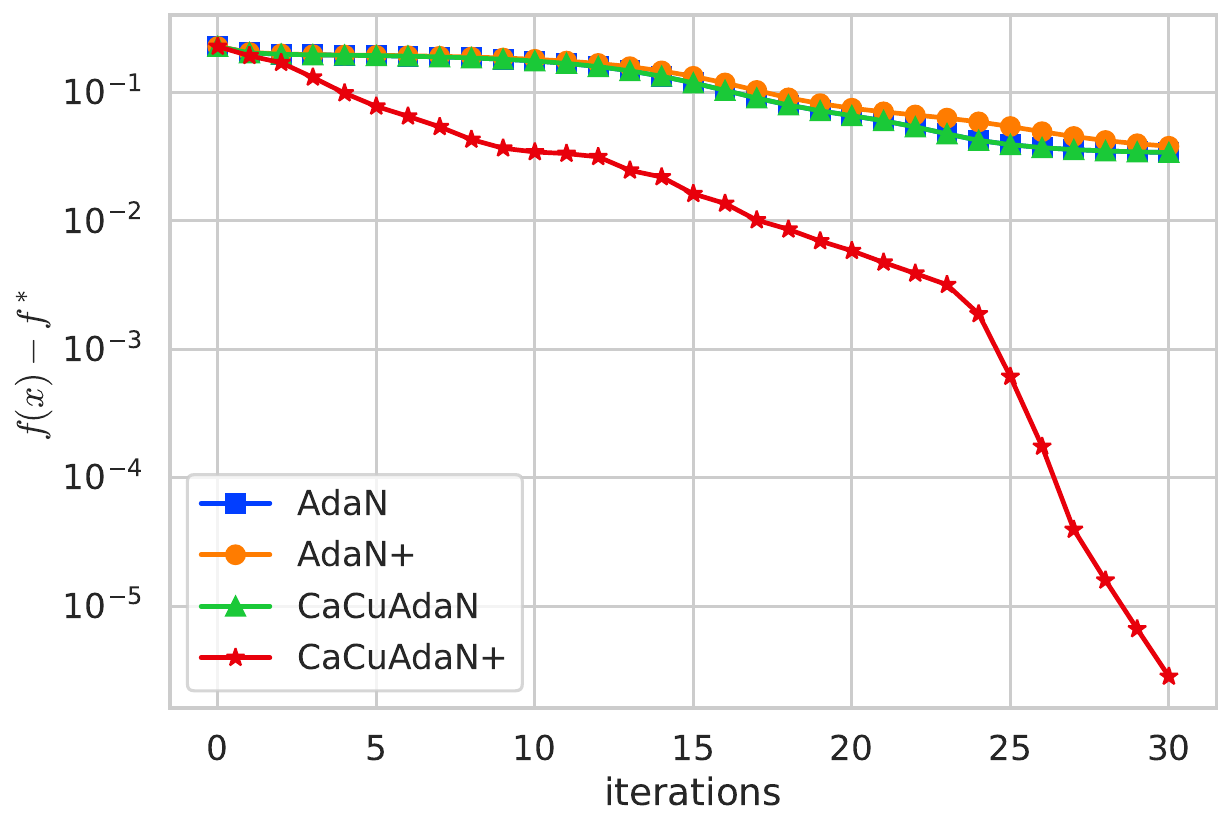}
        \includegraphics[width=1\linewidth]{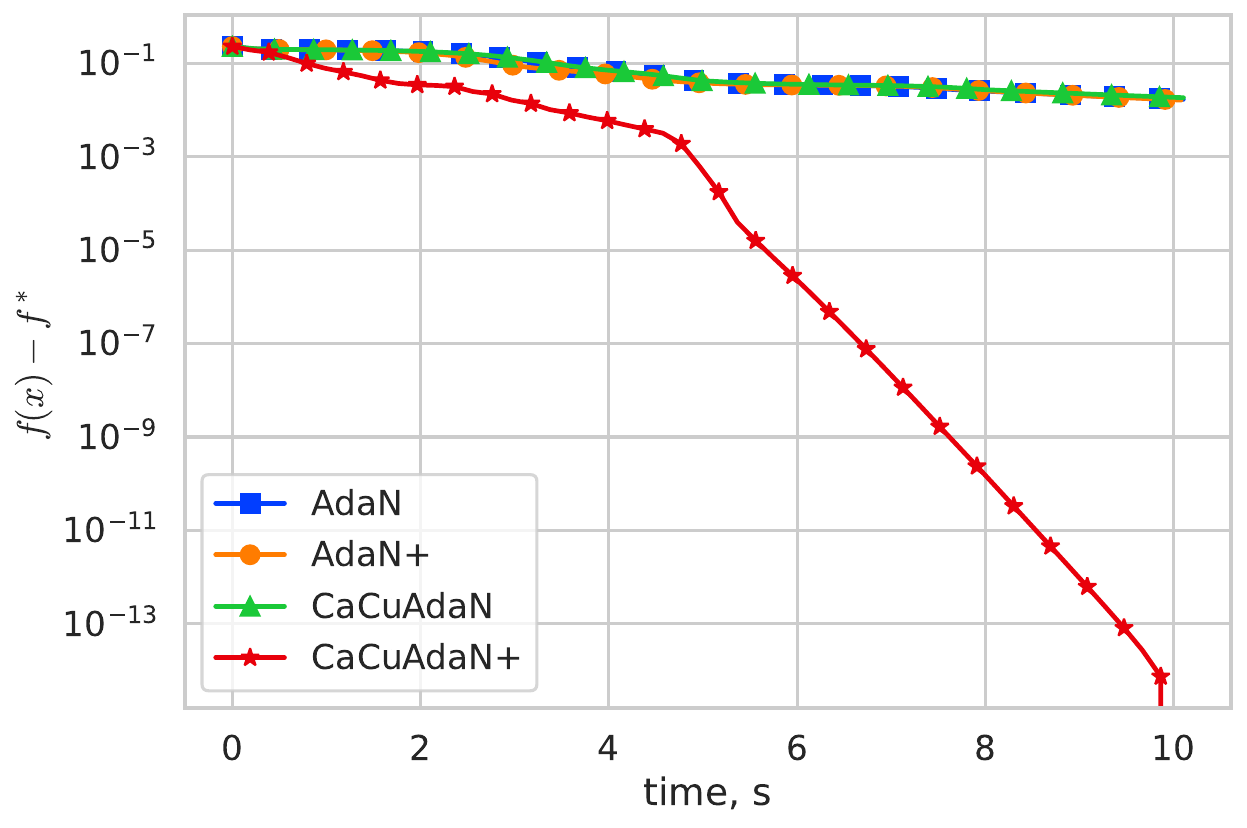}
        \caption{covtype}\label{fig:covtype}
    \end{minipage}
    \begin{minipage}[htp]{0.24\textwidth}
        \centering
        \includegraphics[width=1\linewidth]{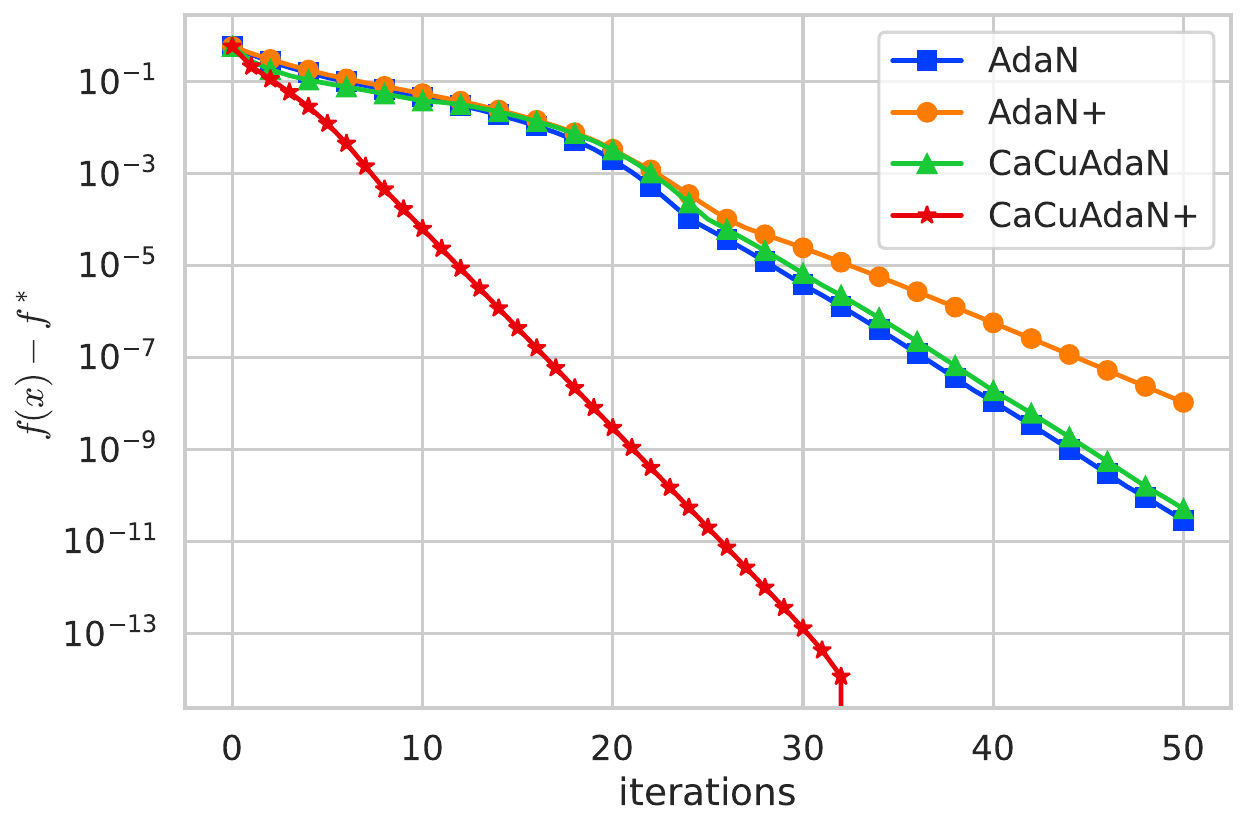}
        \includegraphics[width=1\linewidth]{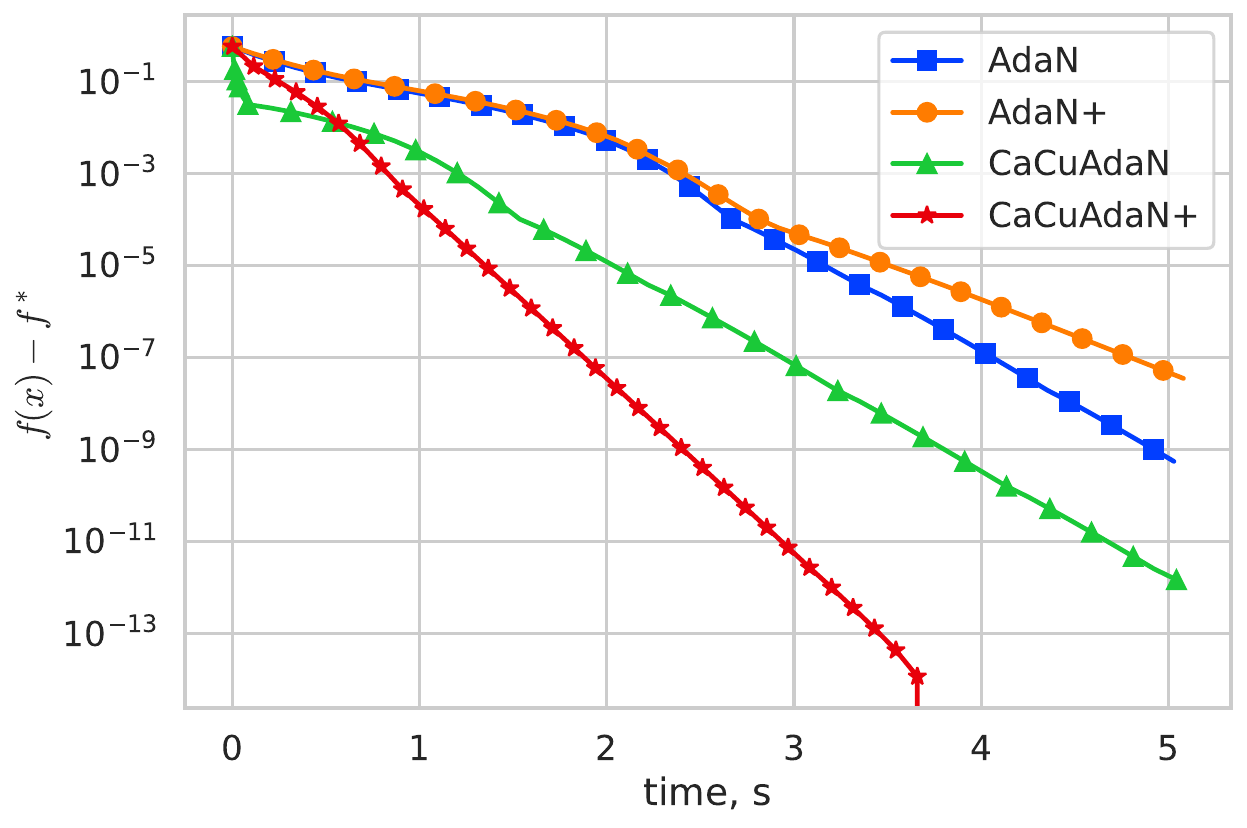}
        \caption{w8a}\label{fig:w8a}
    \end{minipage}
    \begin{minipage}[htp]{0.24\textwidth}
        \centering
        \includegraphics[width=1\linewidth]{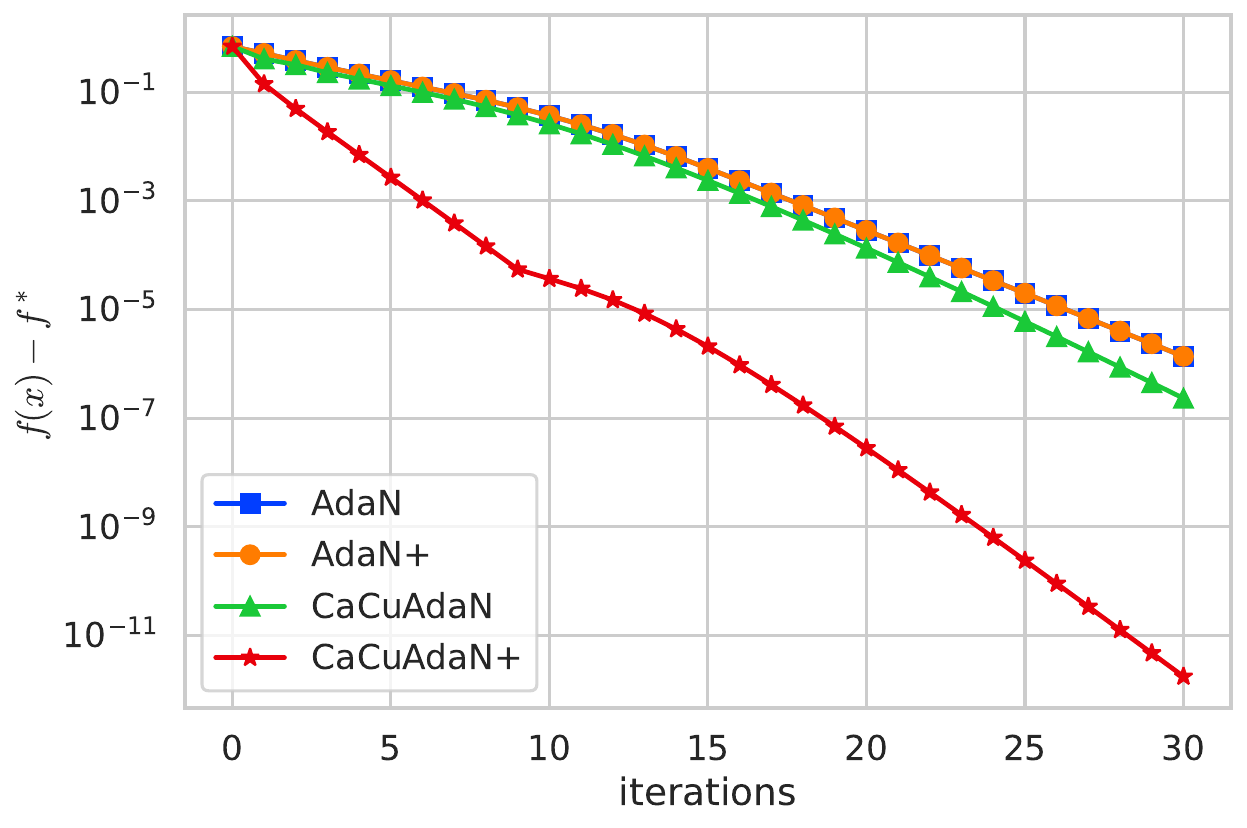}
        \includegraphics[width=1\linewidth]{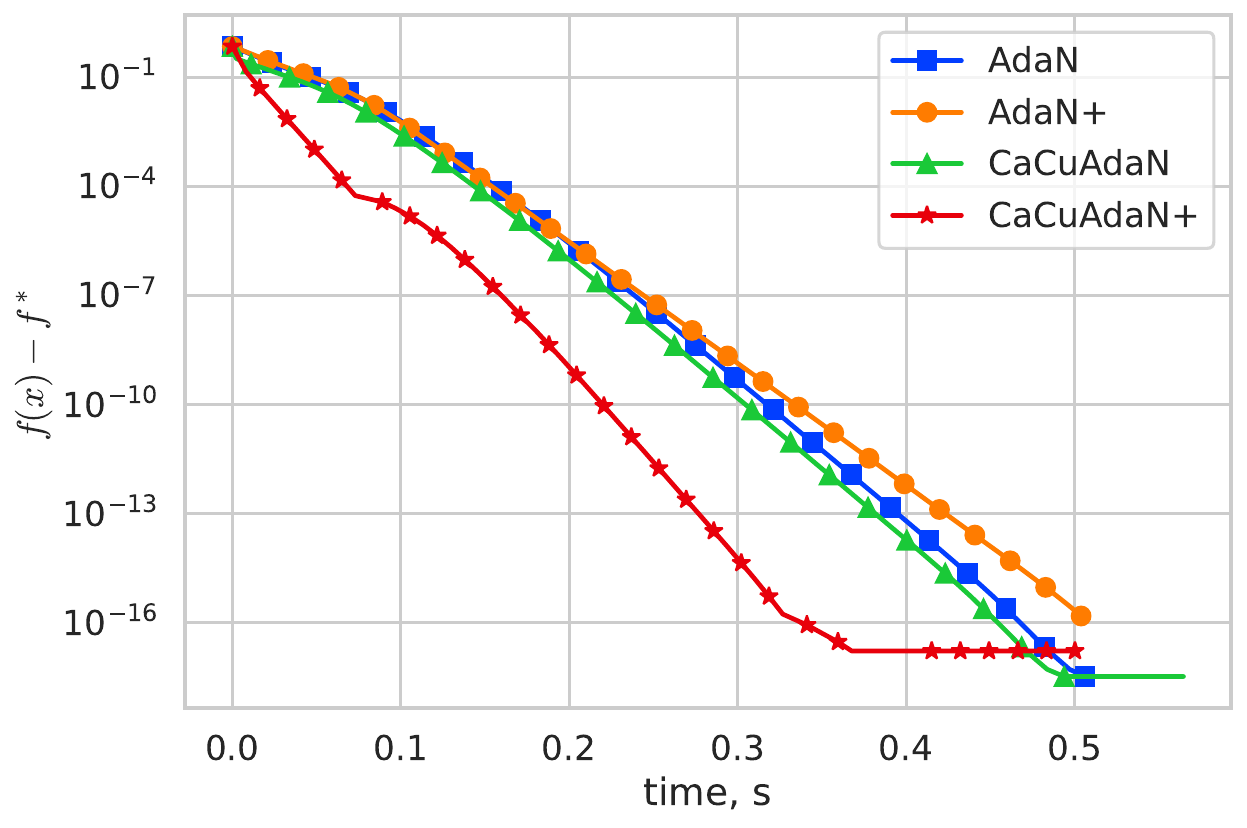}
        \caption{mushrooms}\label{fig:mushrooms}
    \end{minipage}
    \begin{minipage}[htp]{0.24\textwidth}
        \centering
        \includegraphics[width=1\linewidth]{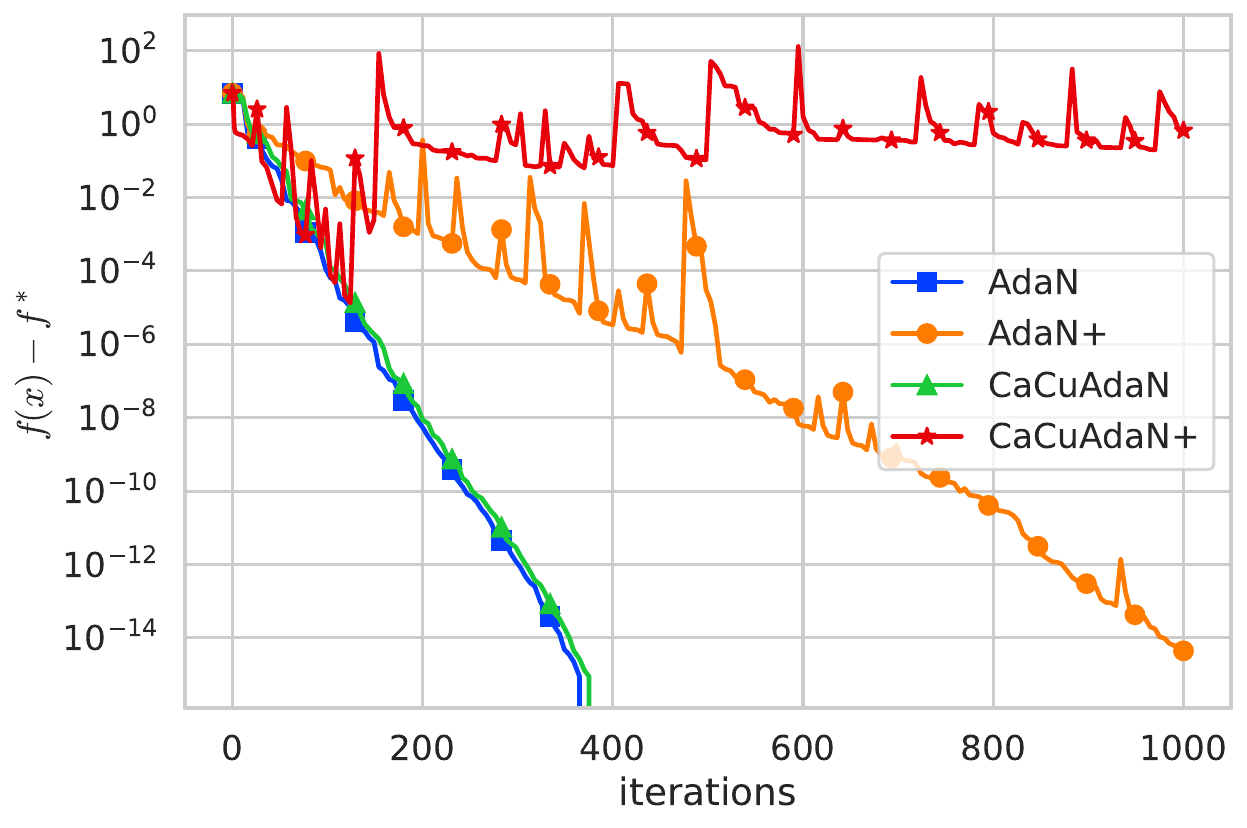}
        \includegraphics[width=1\linewidth]{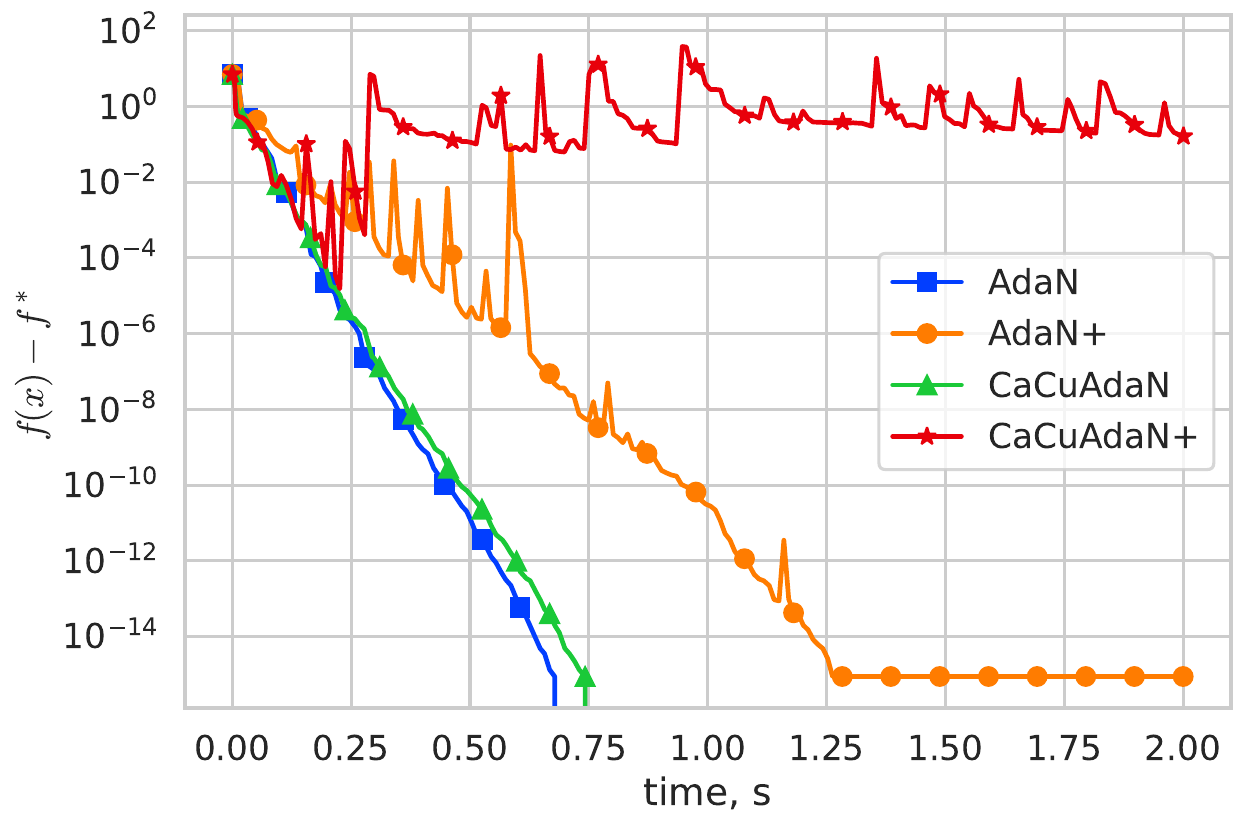}
        \caption{logsumexp (0.05)}\label{fig:logsumexp05}
    \end{minipage}
\end{figure}
Although \algname{CaCuAdaN+} significantly outperforms competing methods on logistic regression, it fails to converge on logsumexp due to too optimistic estimates of $H$.

\subsection{First-Order Methods}
Following~\cite{malitsky20adaptive}, we compare \algname{CaCuAdGD} (\cref{alg:cacuadgd}) with Nesterov's acceleration with Armijo-like line search from \cite{nesterov2013gradient};  gradient descent with Polyak stepsize~\cite{polyak1987introduction} and \algname{AdGD} (\cite{malitsky20adaptive}). The results are presented in \cref{fig:covtype-cacuadgd,fig:w8a-cacuadgd,fig:mushrooms-cacuadgd,fig:lse-005-cacuadgd,fig:lse-025-cacuadgd,fig:lse-010-cacuadgd,fig:lse-050-cacuadgd,fig:lse-075-cacuadgd}.

\begin{figure}[H]
    \centering
    \begin{minipage}[htp]{0.24\textwidth}
        \centering
        \includegraphics[width=1\linewidth]{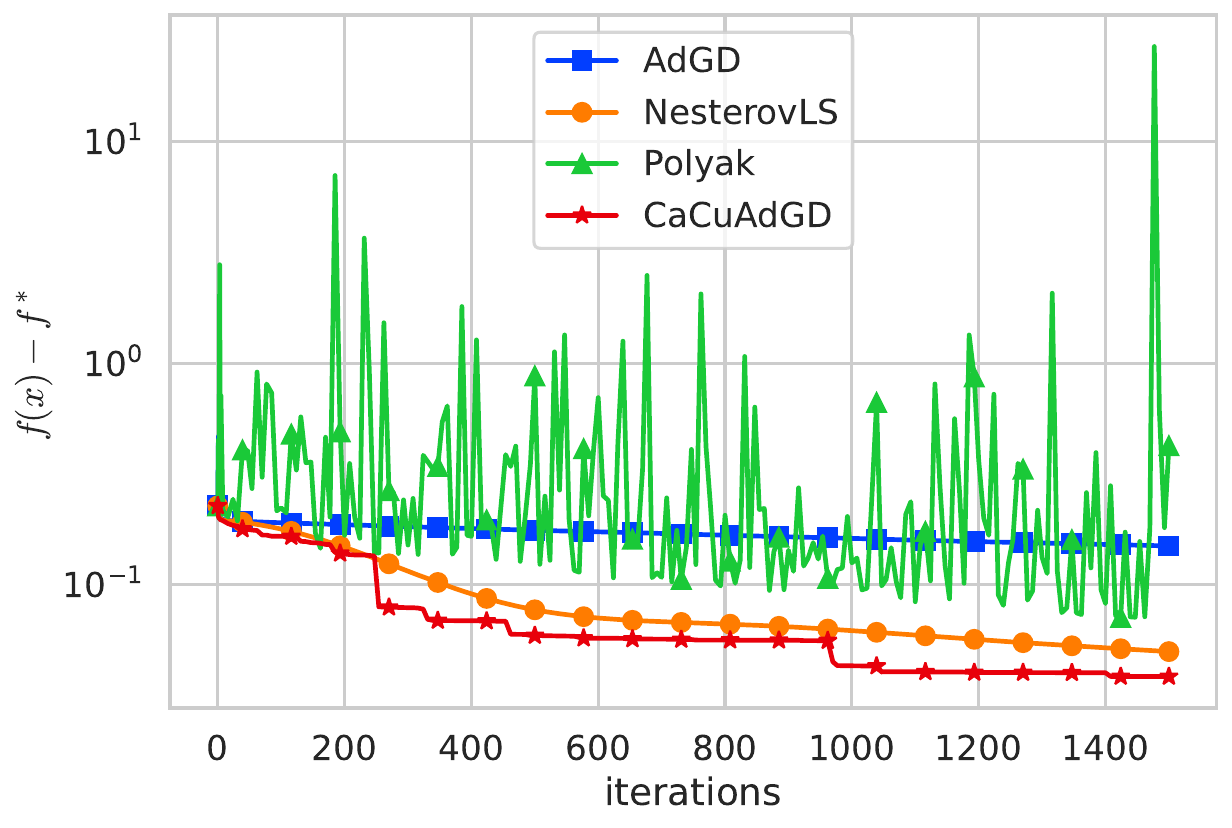}
        \includegraphics[width=1\linewidth]{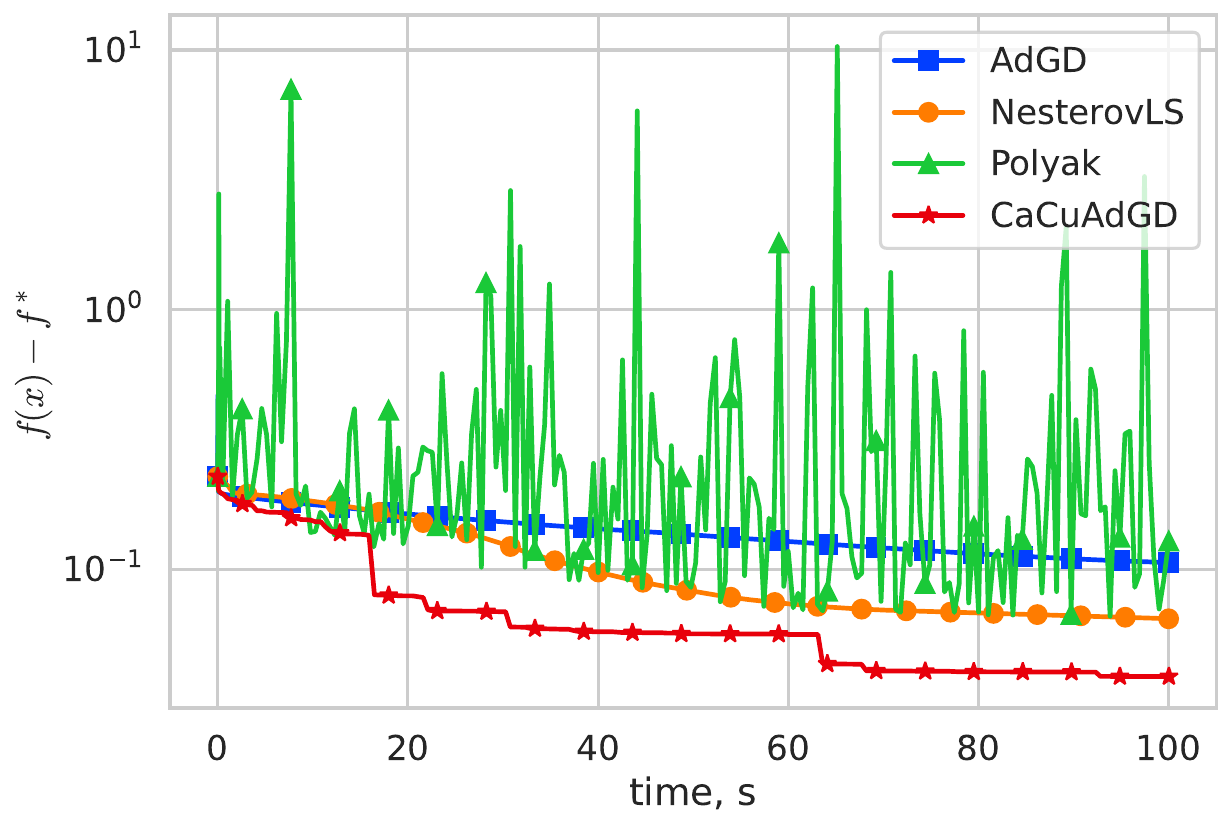}
        \caption{covtype}
        \label{fig:covtype-cacuadgd}
    \end{minipage}
    \begin{minipage}[htp]{0.24\textwidth}
        \centering
        \includegraphics[width=1\linewidth]{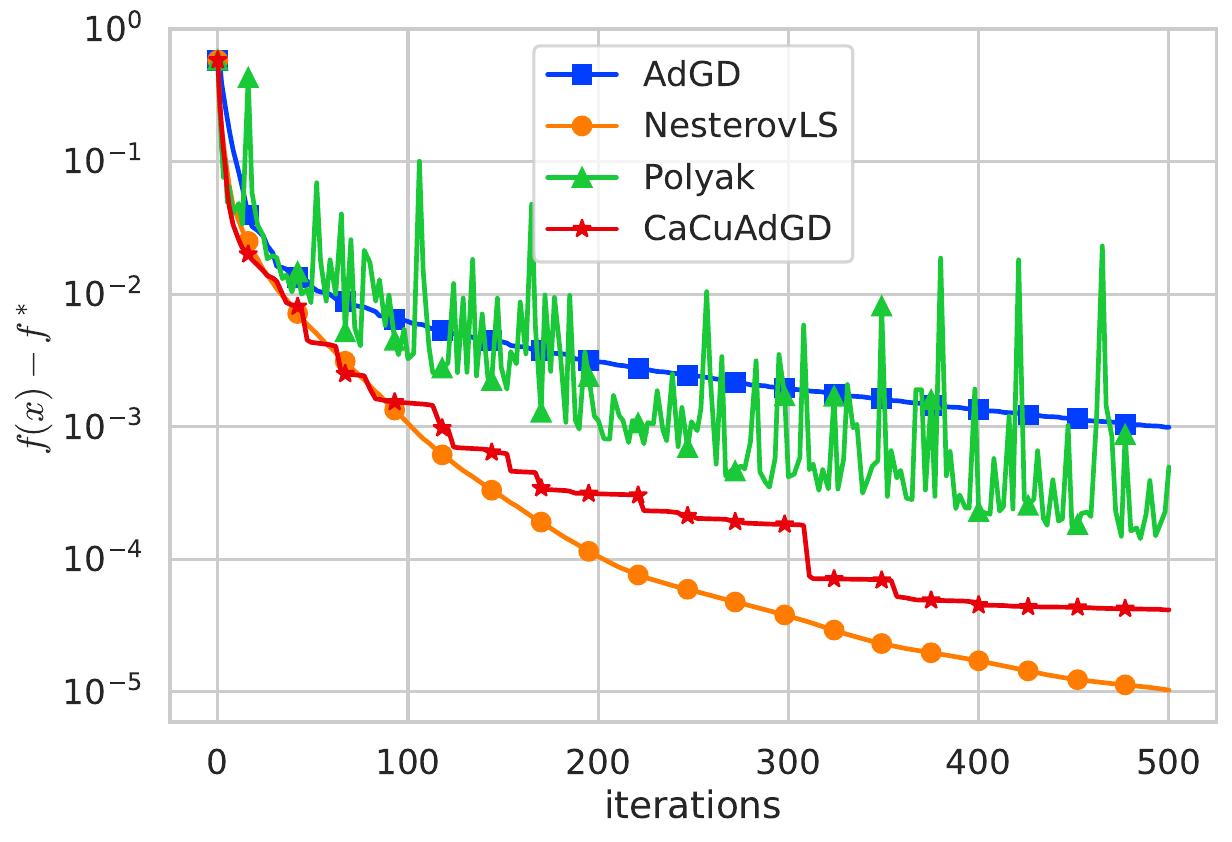}
        \includegraphics[width=1\linewidth]{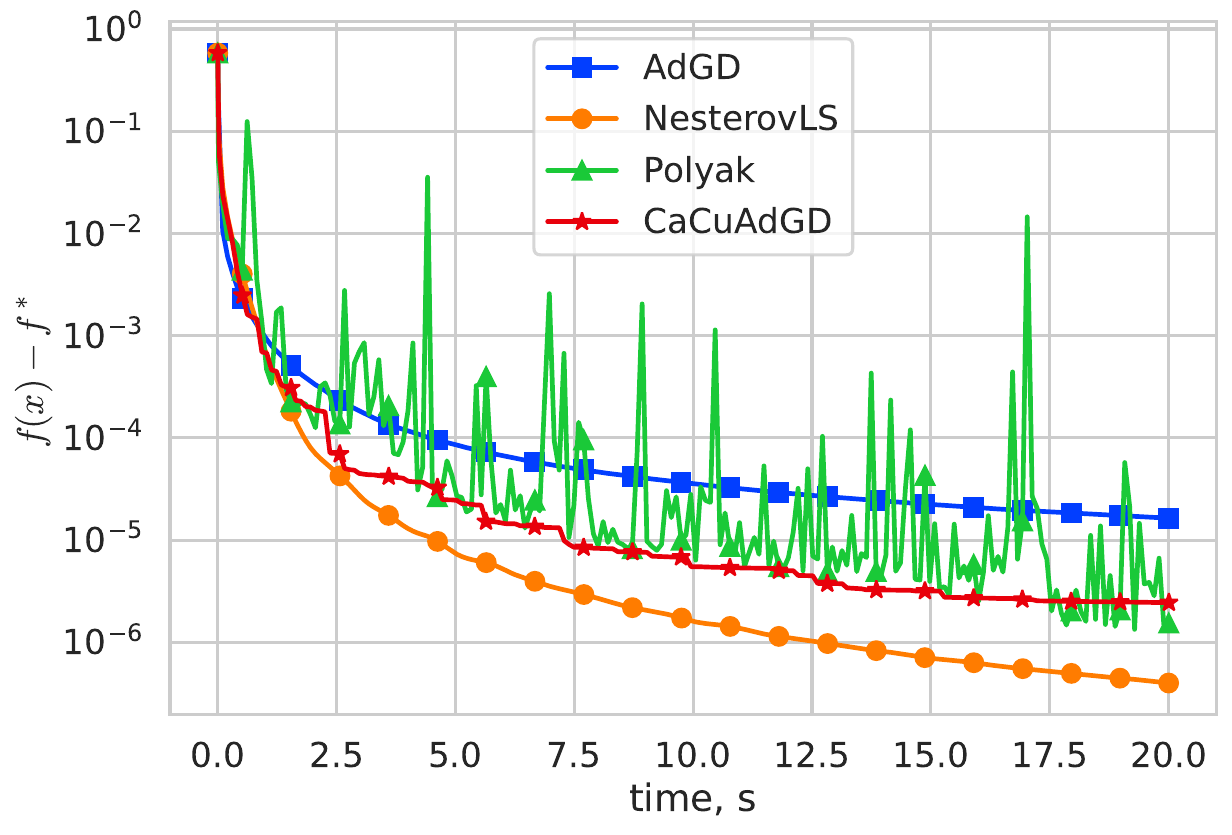}
        \caption{w8a}
        \label{fig:w8a-cacuadgd}
    \end{minipage}
    \begin{minipage}[htp]{0.24\textwidth}
        \centering
        \includegraphics[width=1\linewidth]{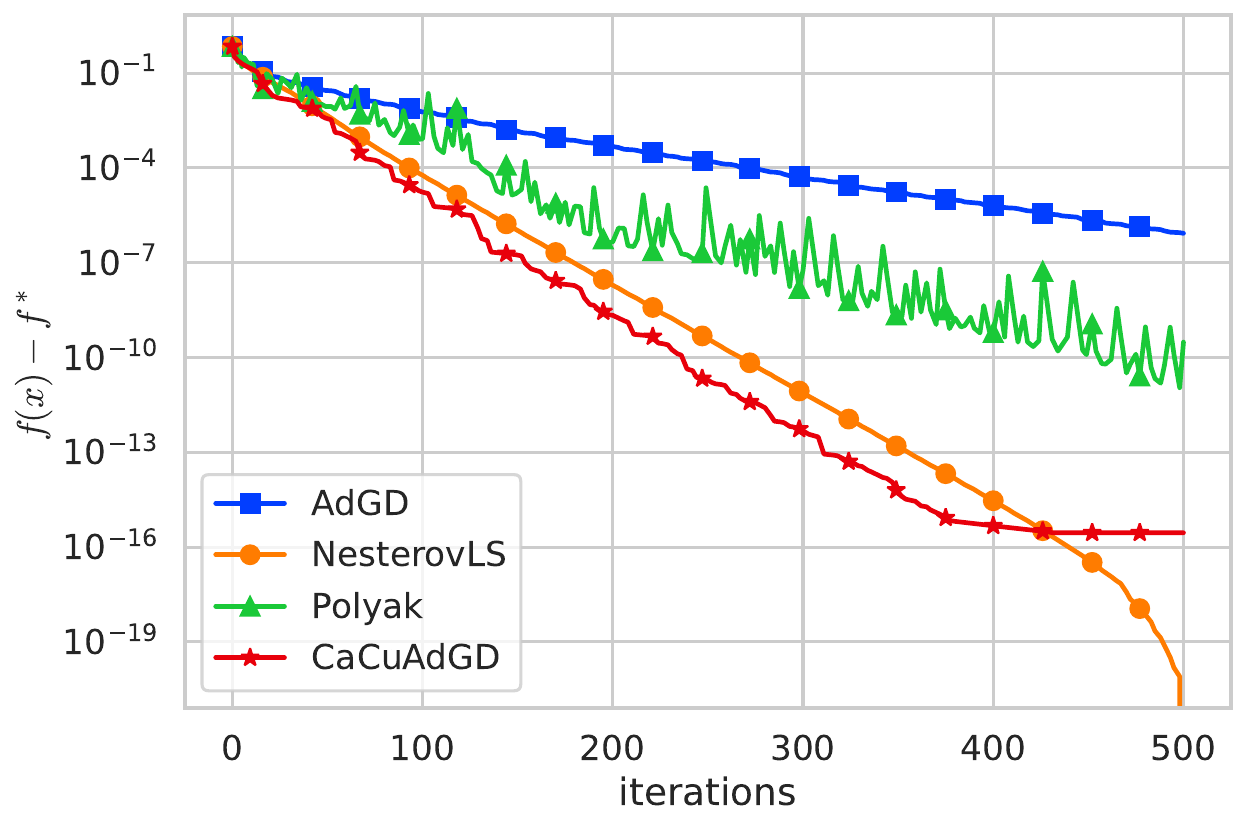}
        \includegraphics[width=1\linewidth]{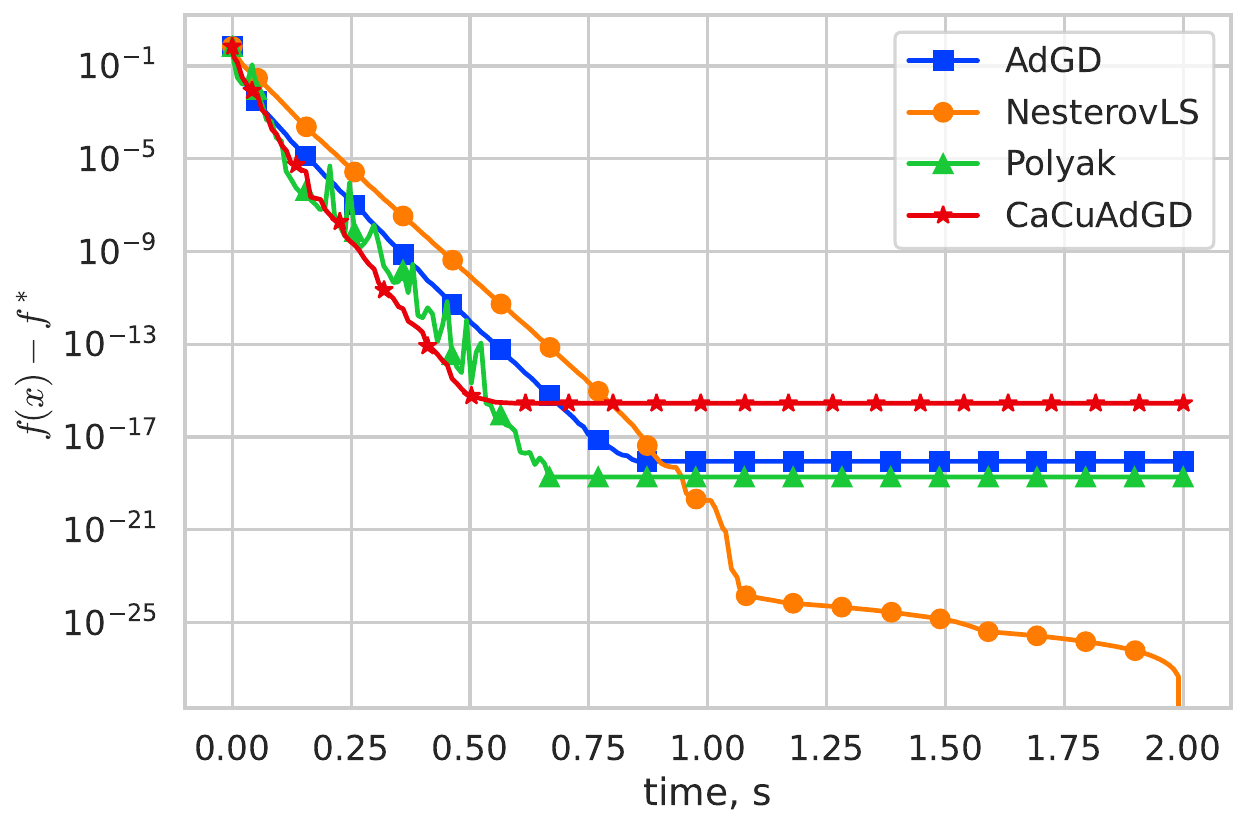}
        \caption{mushrooms}
        \label{fig:mushrooms-cacuadgd}
    \end{minipage}
    \begin{minipage}[htp]{0.24\textwidth}
        \centering
        \includegraphics[width=1\linewidth]{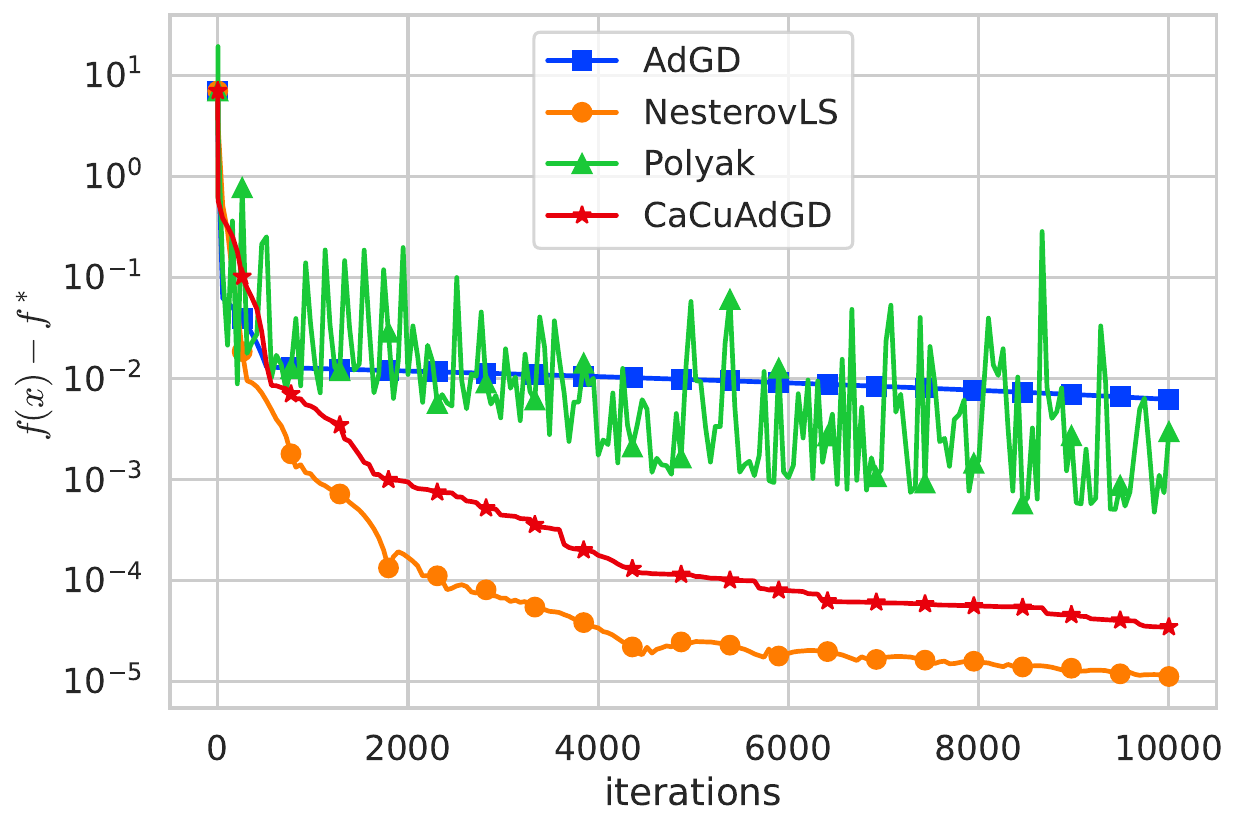}
        \includegraphics[width=1\linewidth]{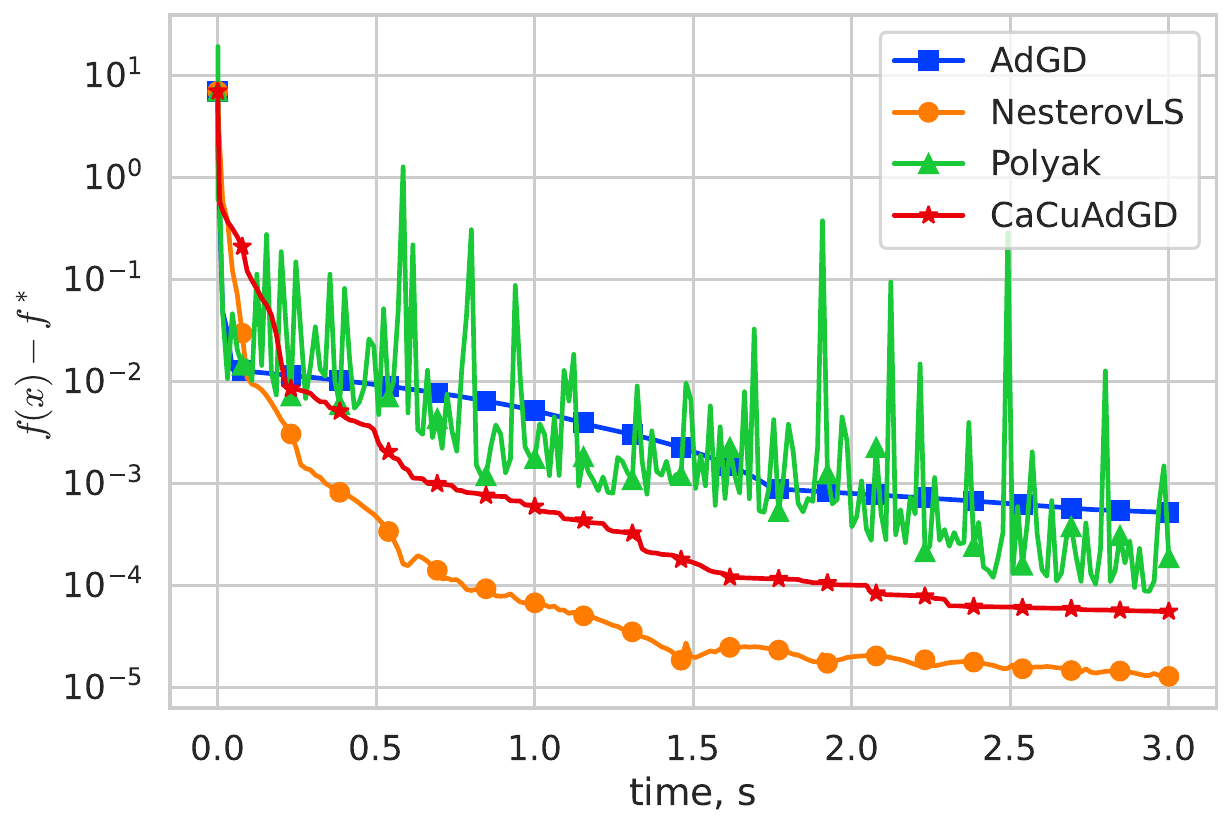}
        \caption{logsumexp (0.05)}
        \label{fig:lse-005-cacuadgd}
    \end{minipage}
\end{figure}
\begin{figure}[H]
    \centering
    \begin{minipage}[htp]{0.24\textwidth}
        \centering
        \includegraphics[width=1\linewidth]{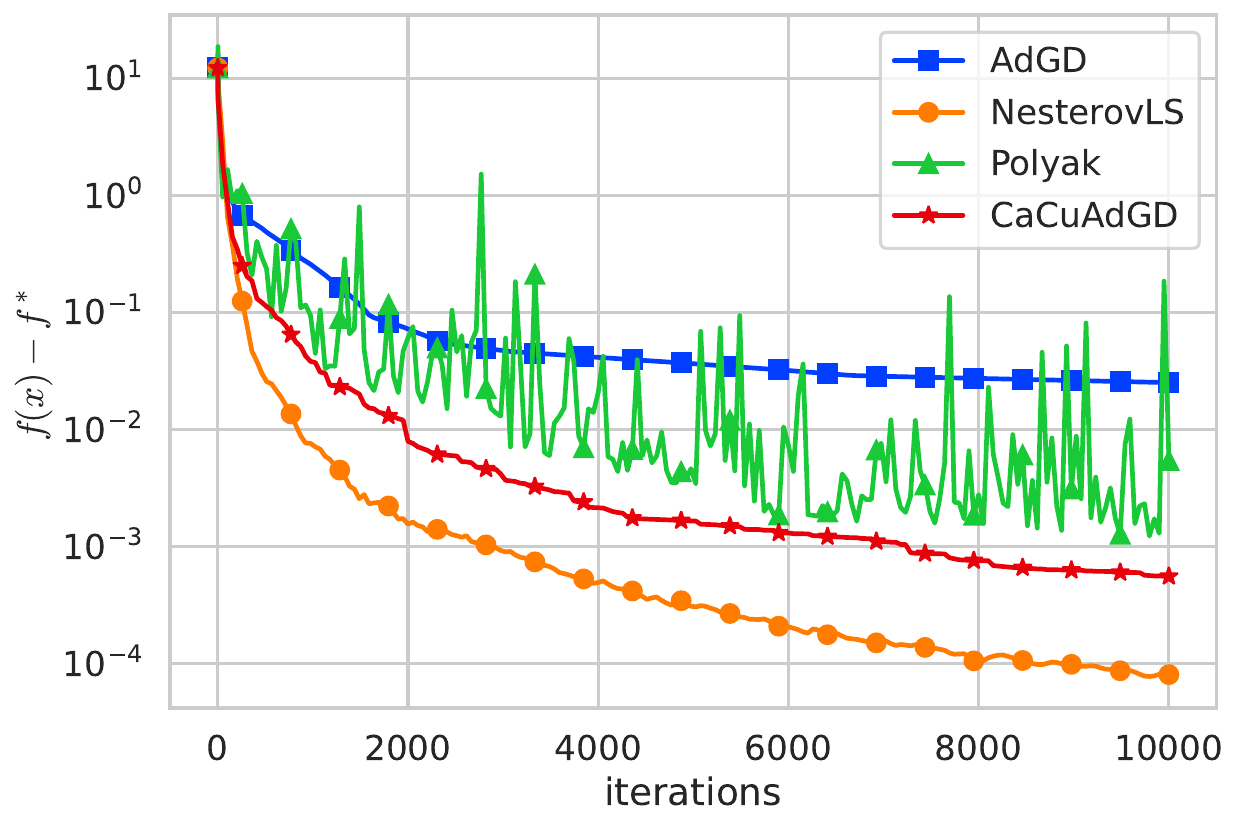}
        \includegraphics[width=1\linewidth]{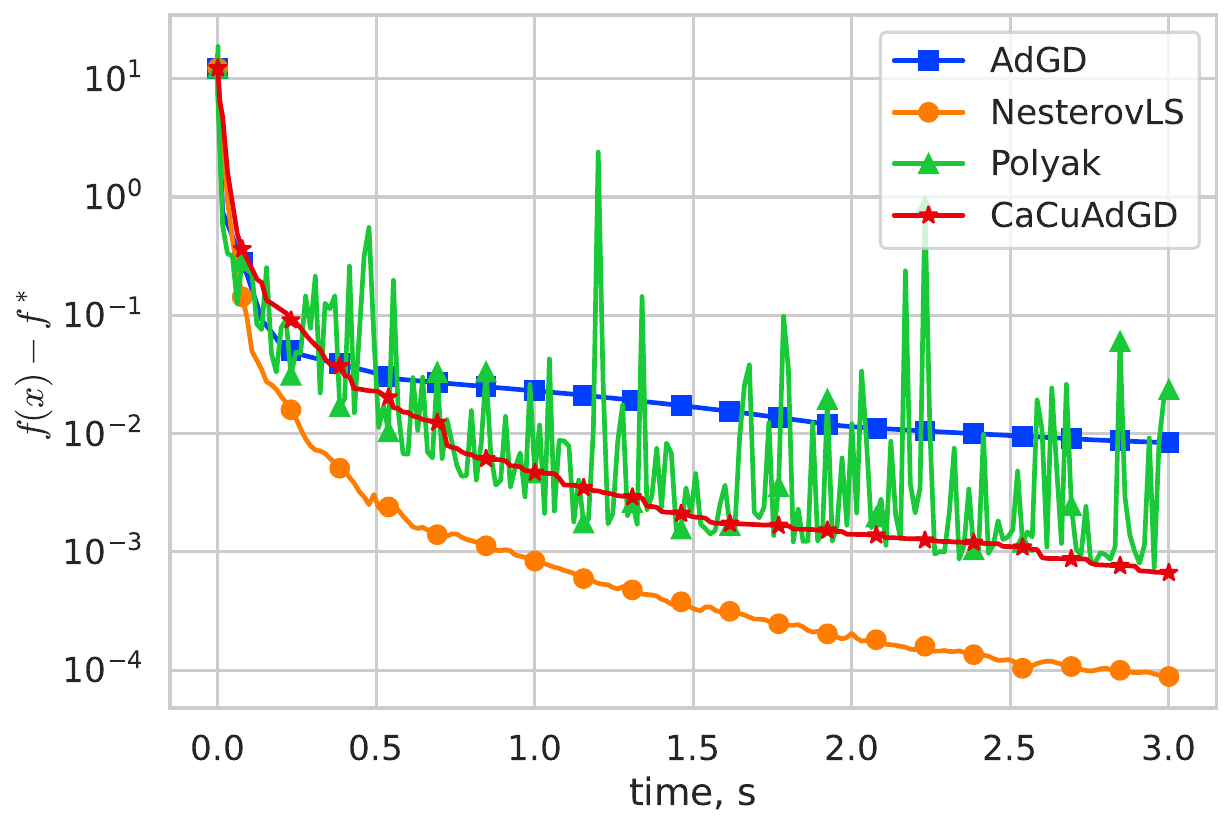}
        \caption{logsumexp (0.10)}
        \label{fig:lse-010-cacuadgd}
    \end{minipage}
    \begin{minipage}[htp]{0.24\textwidth}
        \centering
        \includegraphics[width=1\linewidth]{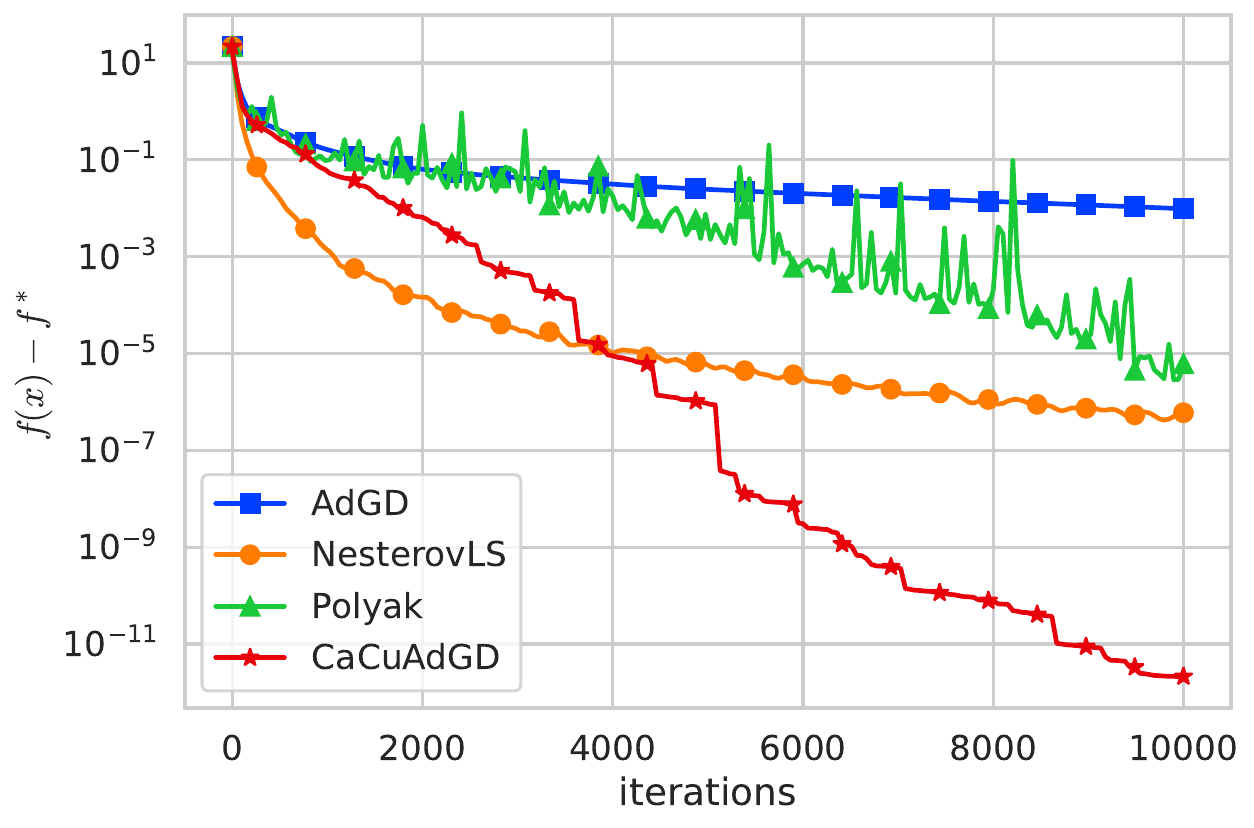}
        \includegraphics[width=1\linewidth]{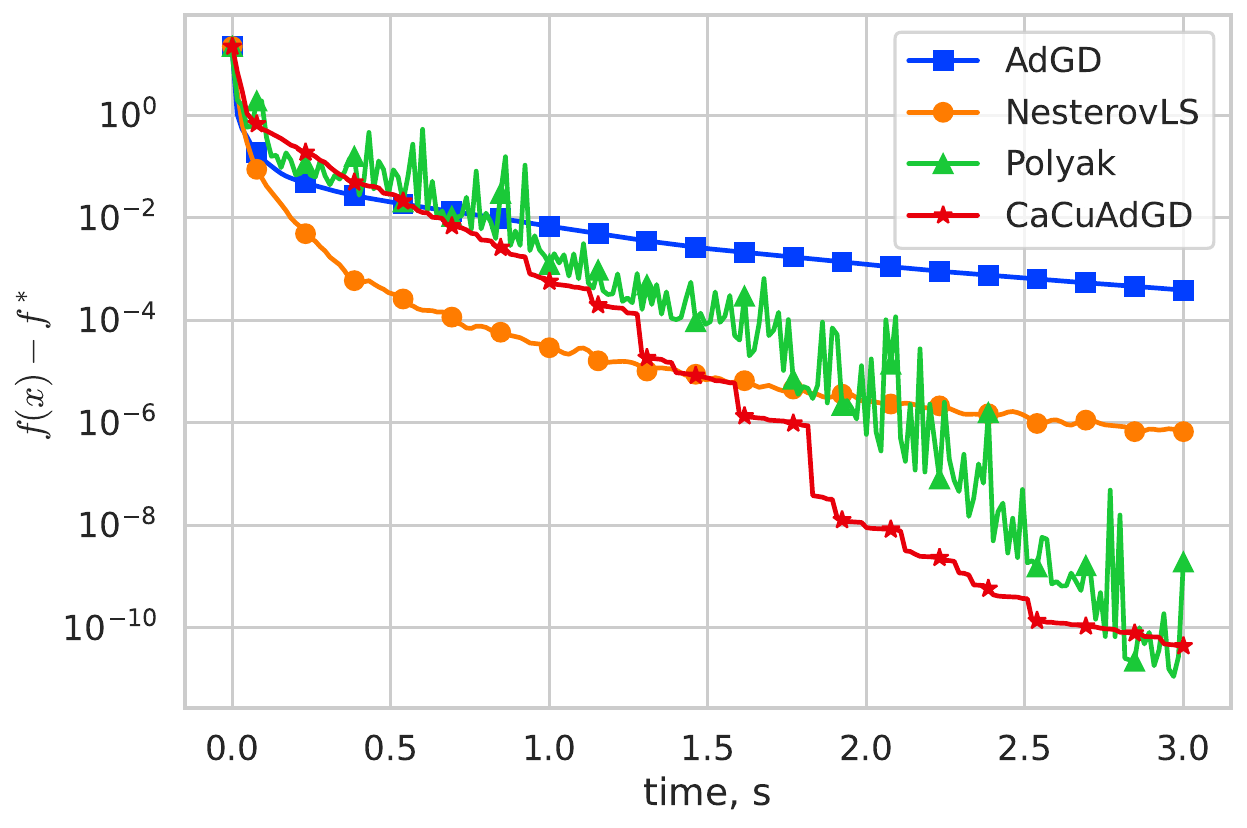}
        \caption{logsumexp (0.25)}
        \label{fig:lse-025-cacuadgd}
    \end{minipage}
    \begin{minipage}[htp]{0.24\textwidth}
        \centering
        \includegraphics[width=1\linewidth]{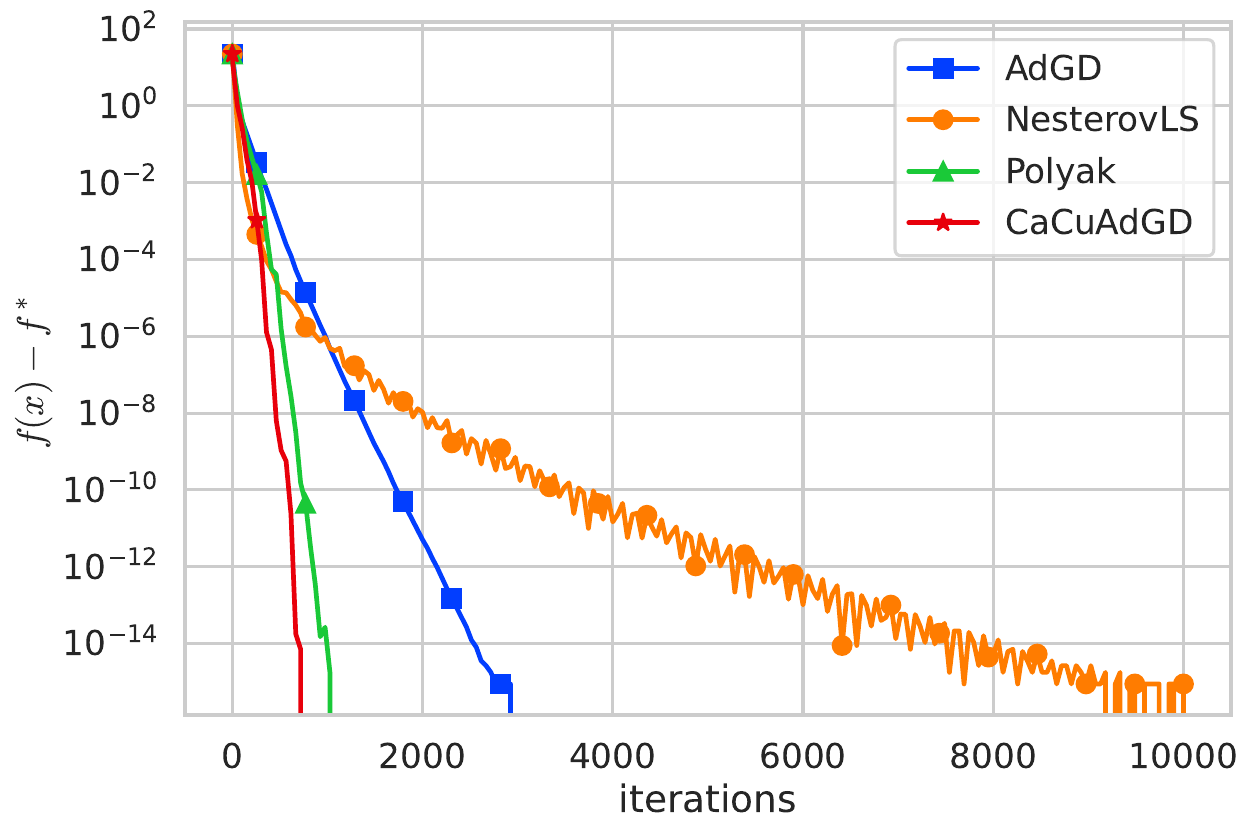}
        \includegraphics[width=1\linewidth]{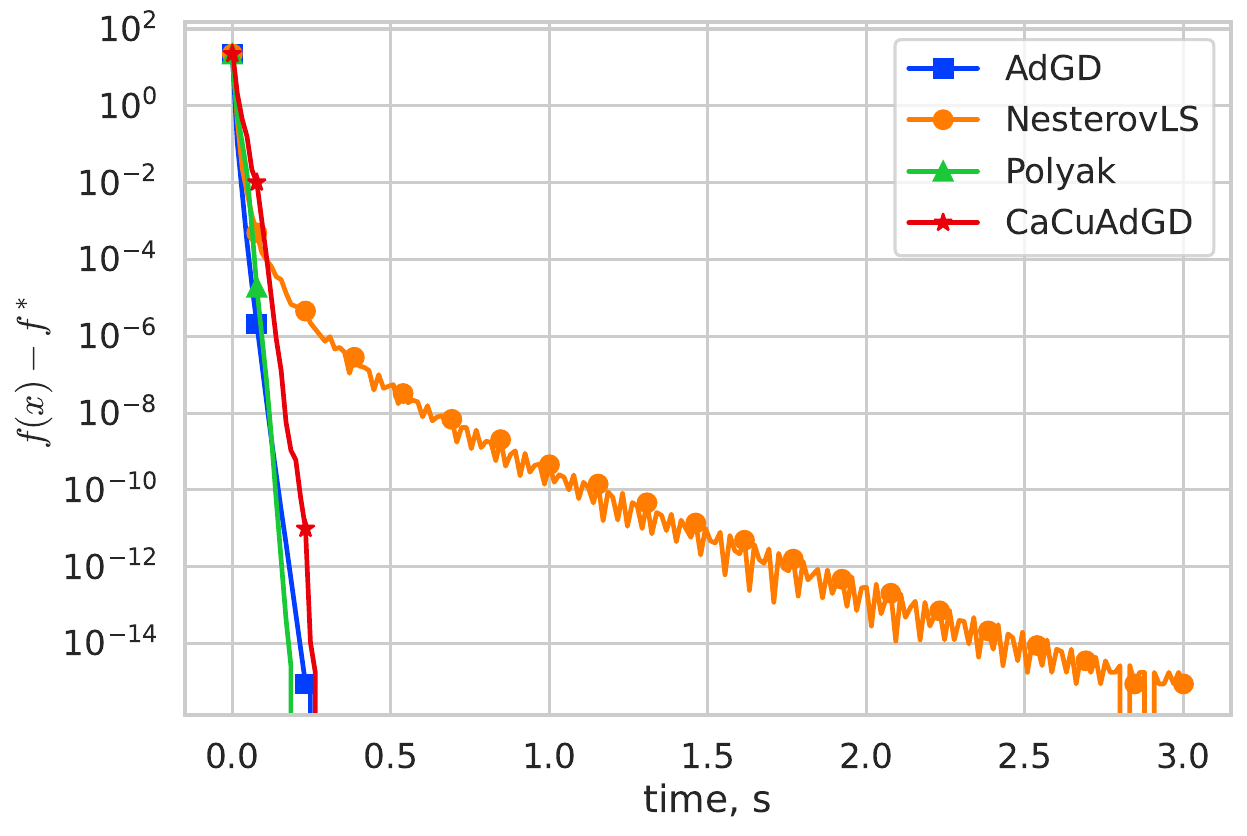}
        \caption{logsumexp (0.50)}
        \label{fig:lse-050-cacuadgd}
    \end{minipage}
    \begin{minipage}[htp]{0.24\textwidth}
        \centering
        \includegraphics[width=1\linewidth]{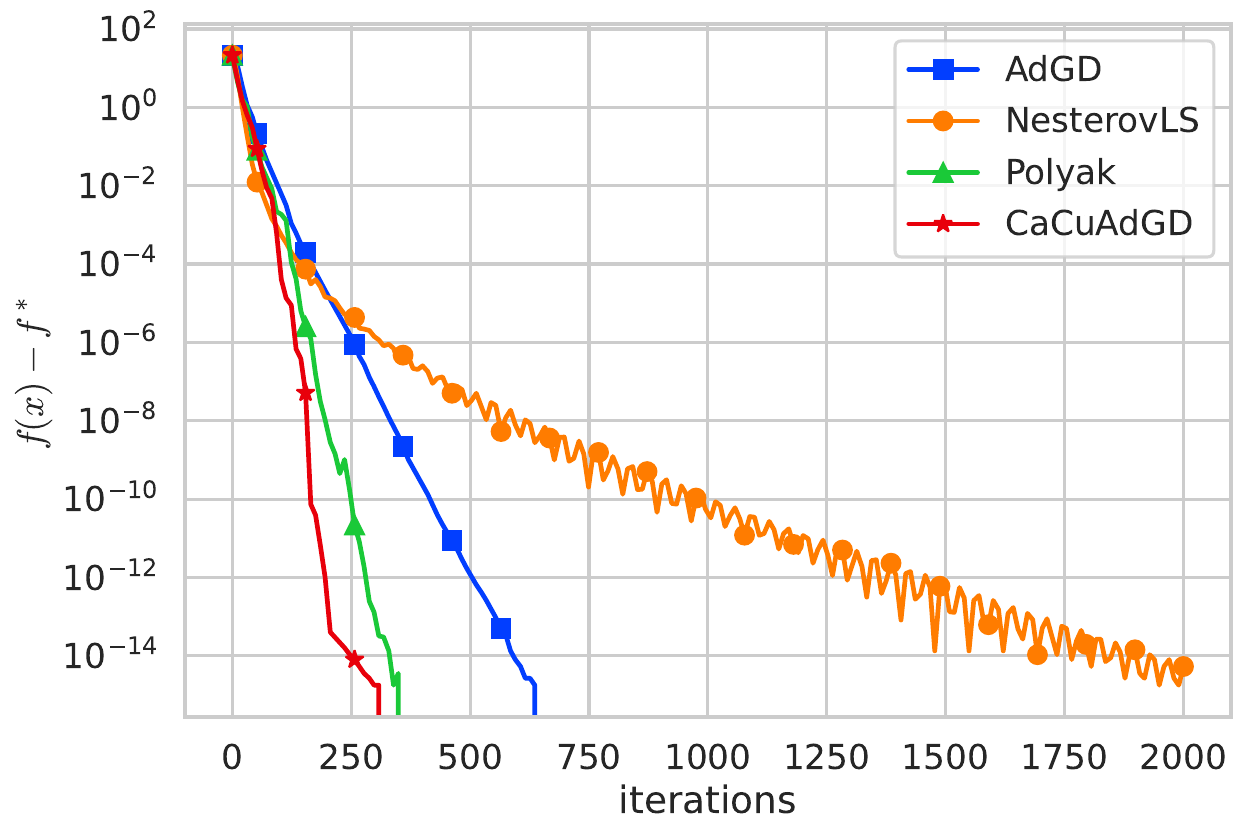}
        \includegraphics[width=1\linewidth]{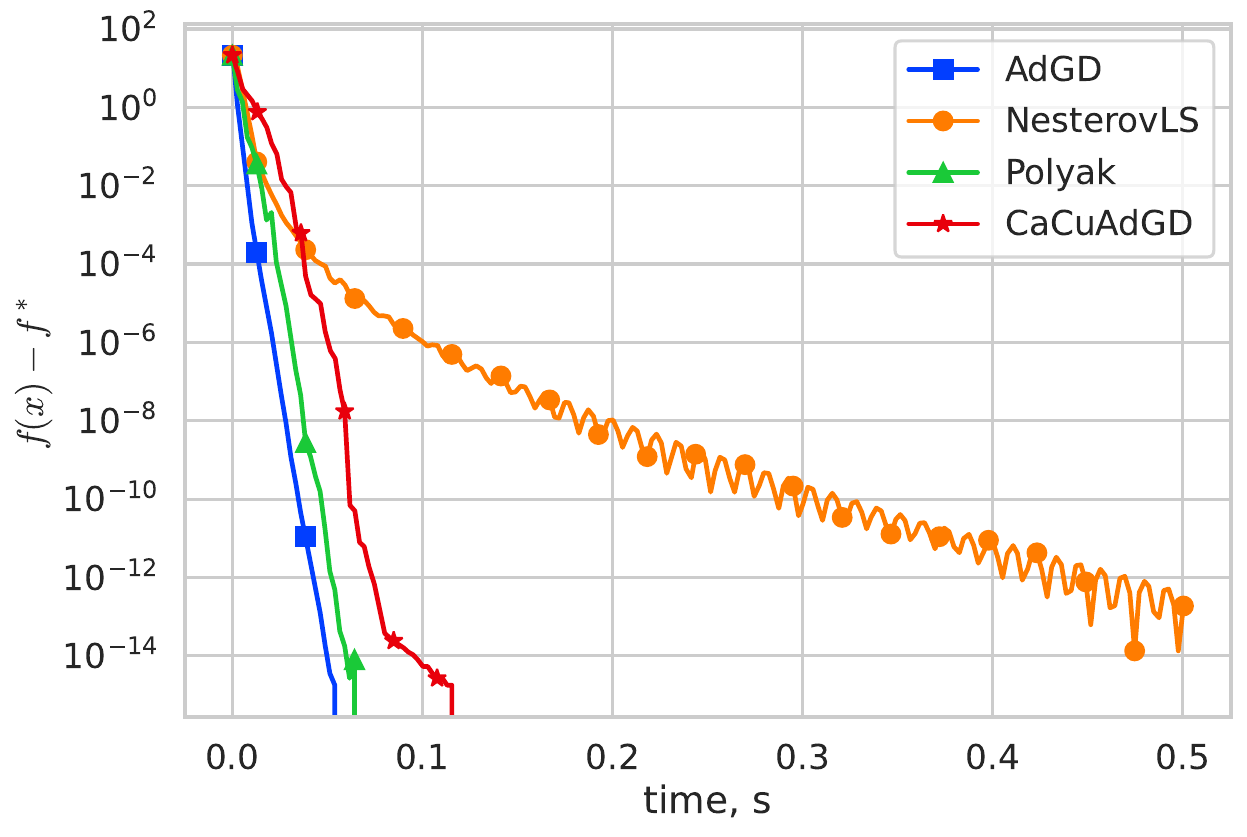}
        \caption{logsumexp (0.75)}
        \label{fig:lse-075-cacuadgd}
    \end{minipage}
\end{figure}
One can see, that \algname{CaCuAdGD} consistently outperforms \algname{AdGD} on hard problems and often matches or surpasses \algname{Polyak} and \algname{NesterovLS}, otherwise staying competitive.

\subsubsection{First-Order Method ($\ell_2$ regularized)}
We repeat the experiments with a small $\ell=10^{-7}$ term in~\cref{eq:logreg} to ensure a unique, well-conditioned optimum and to mirror standard practice. The results are presented in \cref{fig:covtype-l2-1e-7,fig:w8a-l2-1e-7,fig:mushrooms-l2-1e-7,fig:logsumexp005-l2-1e-7}.
\begin{figure}[H]
    \centering
    \begin{minipage}[htp]{0.24\textwidth}
        \centering
        \includegraphics[width=1\linewidth]{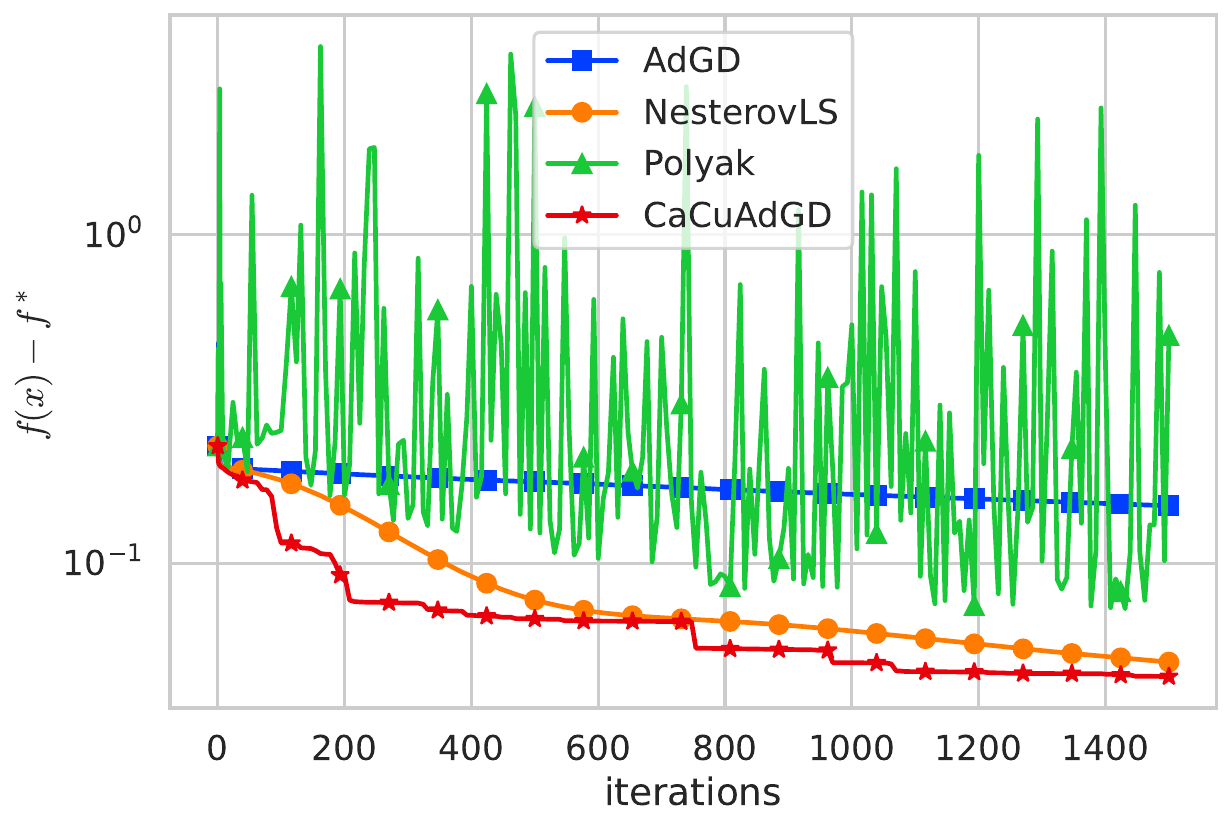}
        \includegraphics[width=1\linewidth]{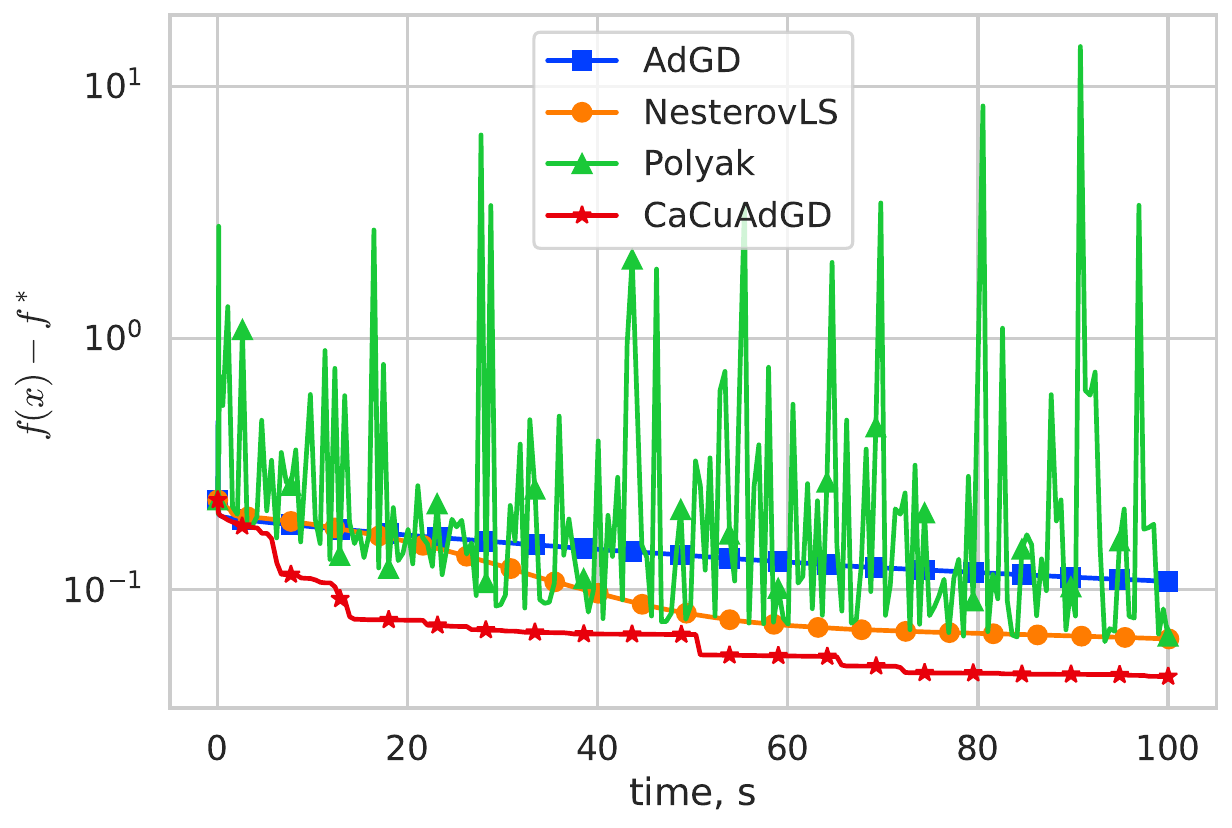}
        \caption{covtype}\label{fig:covtype-l2-1e-7}
    \end{minipage}
    \begin{minipage}[htp]{0.24\textwidth}
        \centering
        \includegraphics[width=1\linewidth]{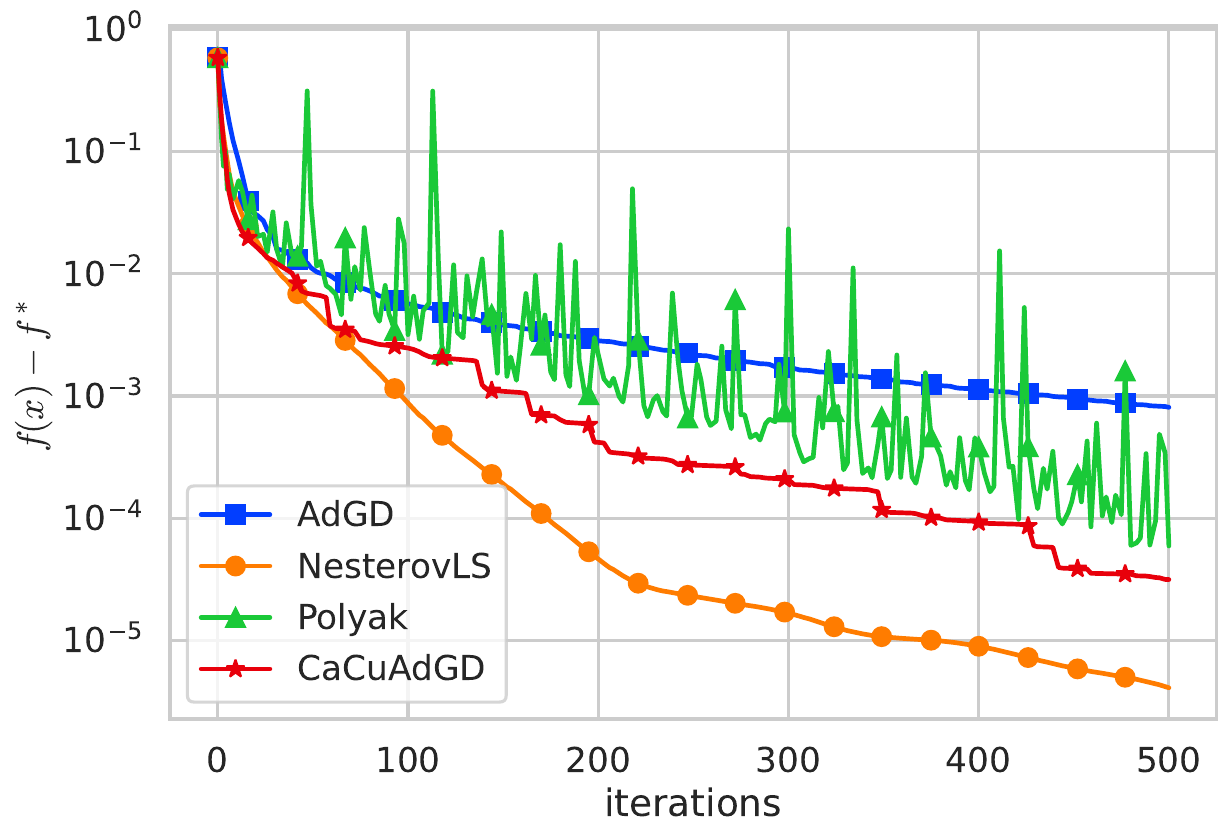}
        \includegraphics[width=1\linewidth]{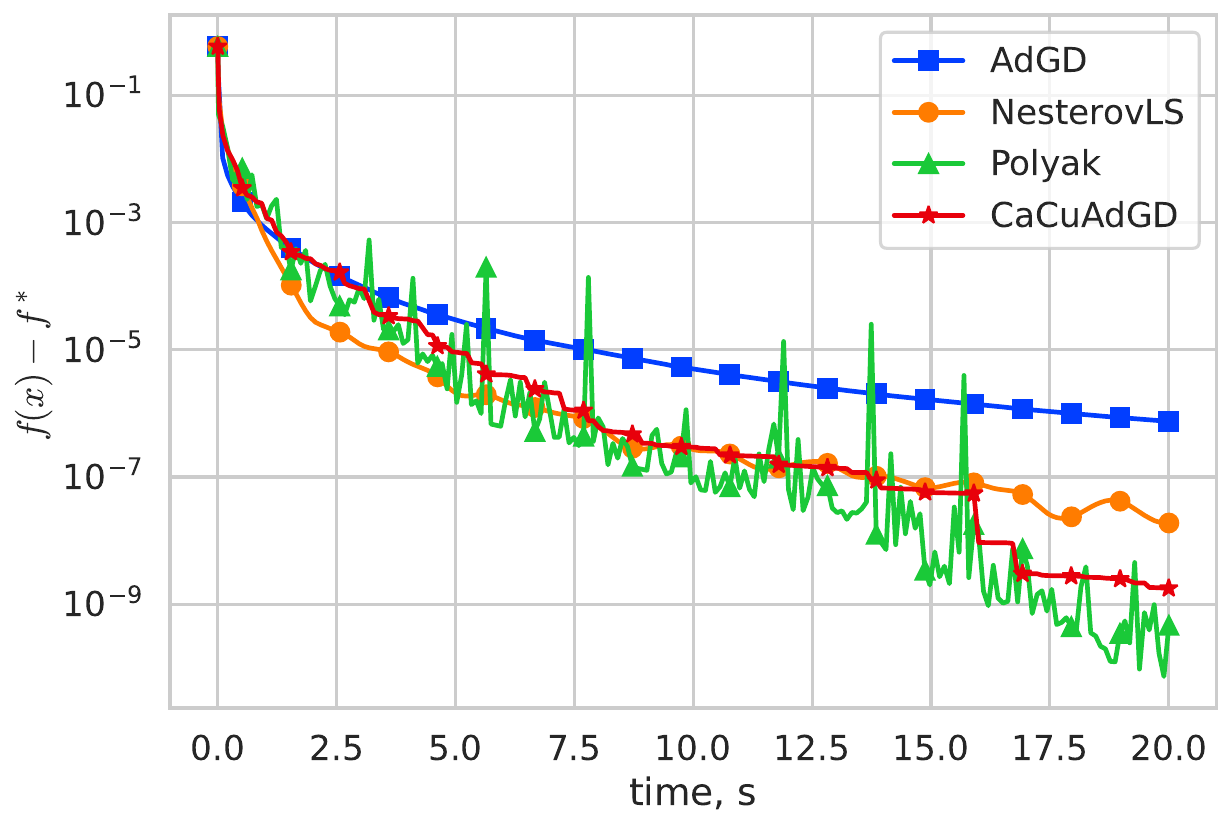}
        \caption{w8a}\label{fig:w8a-l2-1e-7}
    \end{minipage}
    \begin{minipage}[htp]{0.24\textwidth}
        \centering
        \includegraphics[width=1\linewidth]{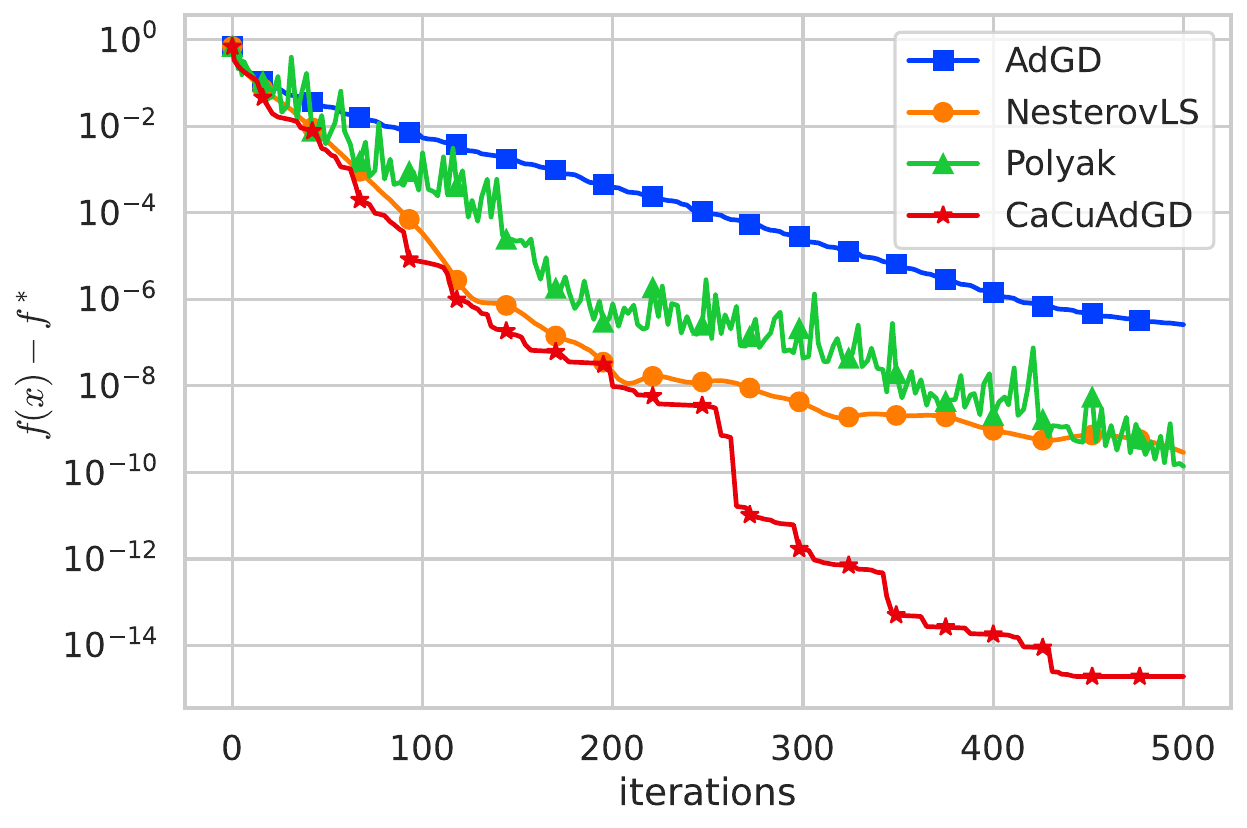}
        \includegraphics[width=1\linewidth]{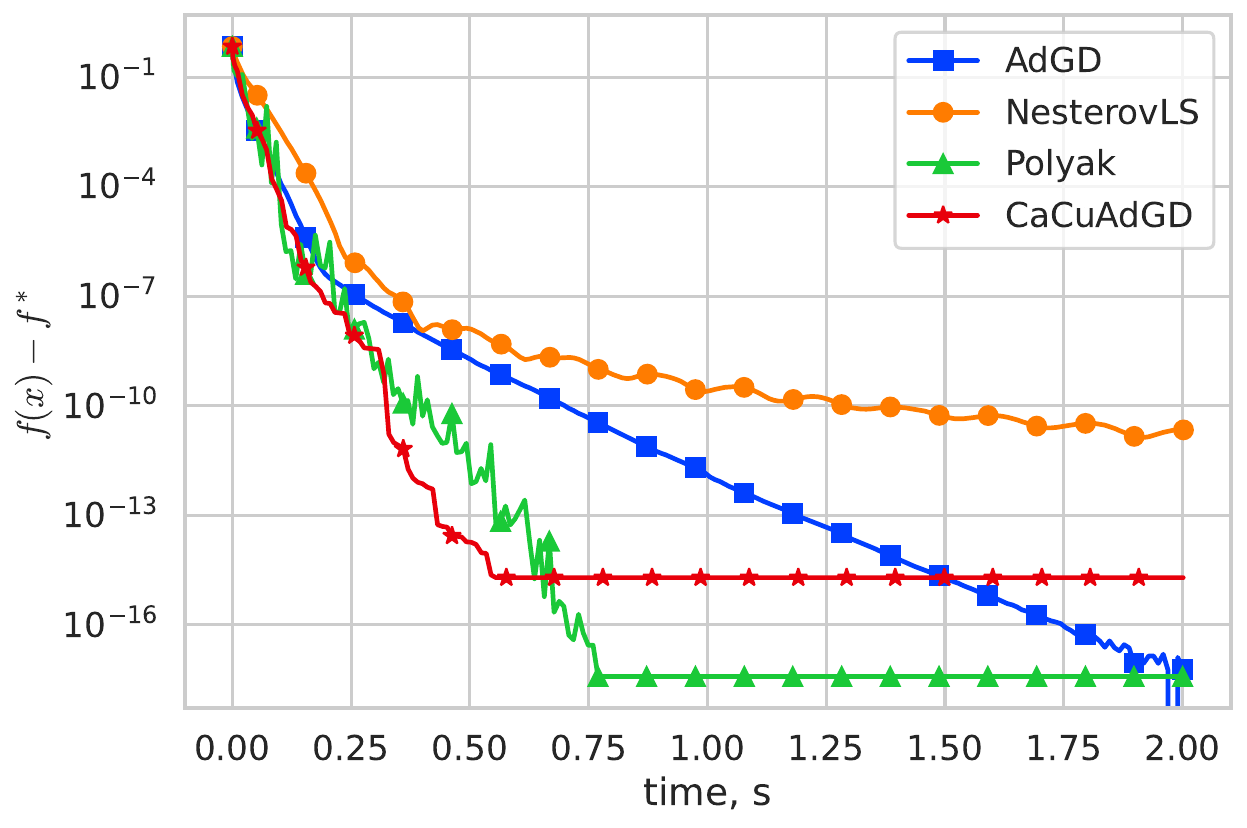}
        \caption{mushrooms}\label{fig:mushrooms-l2-1e-7}
    \end{minipage}
    \begin{minipage}[htp]{0.24\textwidth}
        \centering
        \includegraphics[width=1\linewidth]{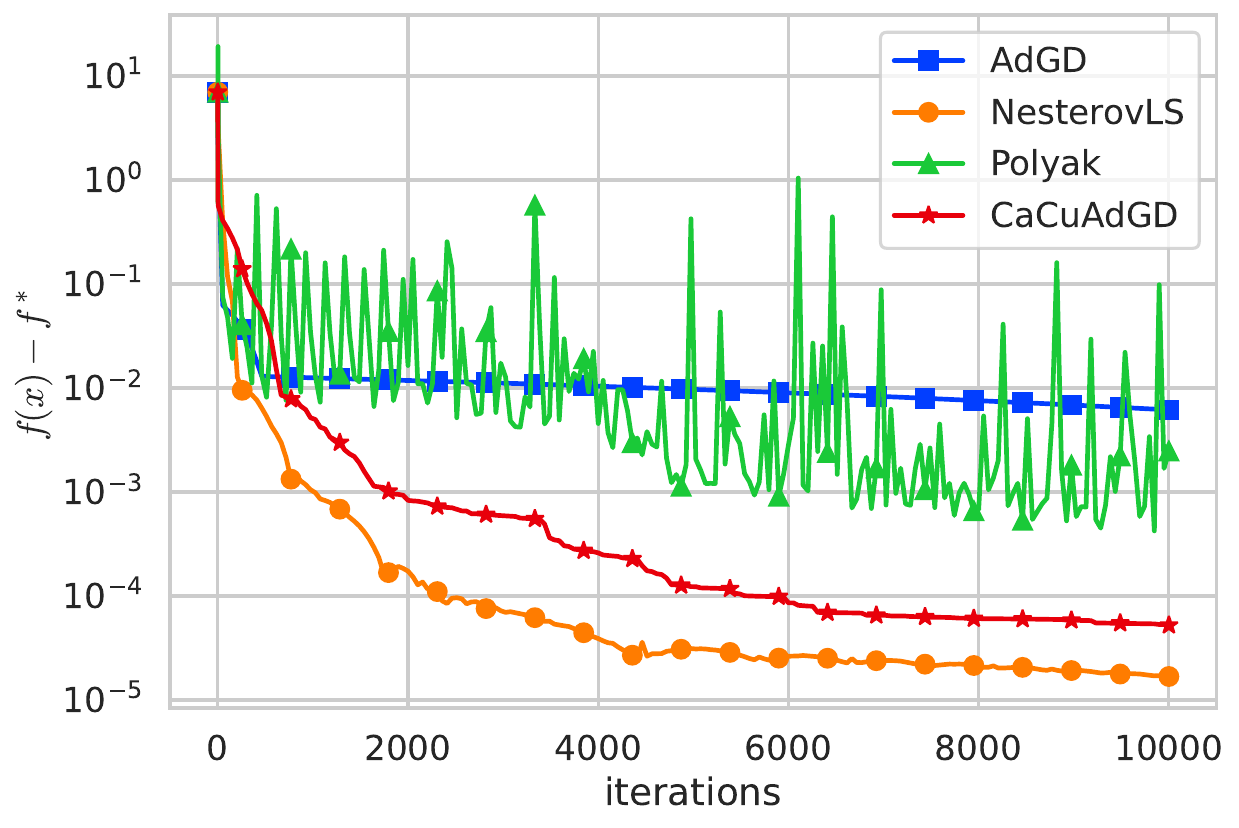}
        \includegraphics[width=1\linewidth]{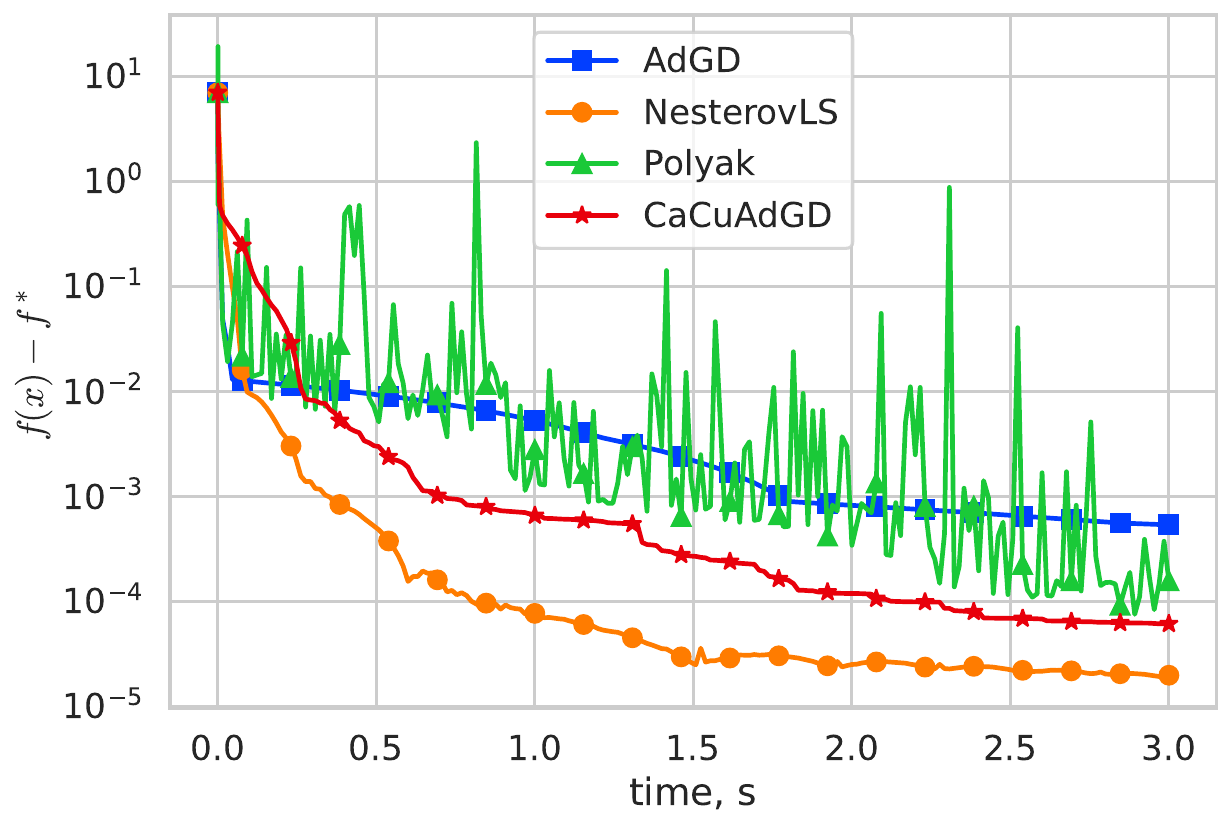}
        \caption{logsumexp (0.05)}
        \label{fig:logsumexp005-l2-1e-7}
    \end{minipage}
\end{figure}
One can spot a difference for `w8a' and `mushrooms': \algname{NewtonLS} converges slower, allowing \algname{CaCuAdGD} to show faster wall-clock performance.

\subsection{Stochastic Extension} \label{sec:stoch exp}
In this section we study logistic regression~\eqref{eq:logreg} in the stochastic finite-sum setting using \algname{SGD} ($x^{k+1} = x^k - {g^k}/{\widehat L}$) and \algname{CaCuSGD} (\cref{thm:cacusgd}). We train with shuffled mini-batches of size $n/10$ (drop-last) and sweep $\widehat L$ and $\widehat H$ (for \algname{CaCuSGD}). The results are presented in \cref{fig:hatLhundred,fig:hatL10,fig:hatL1}.

\begin{figure}[H]
    \centering
    \begin{minipage}[htp]{0.24\textwidth}
        \centering
        \includegraphics[width=1\linewidth]{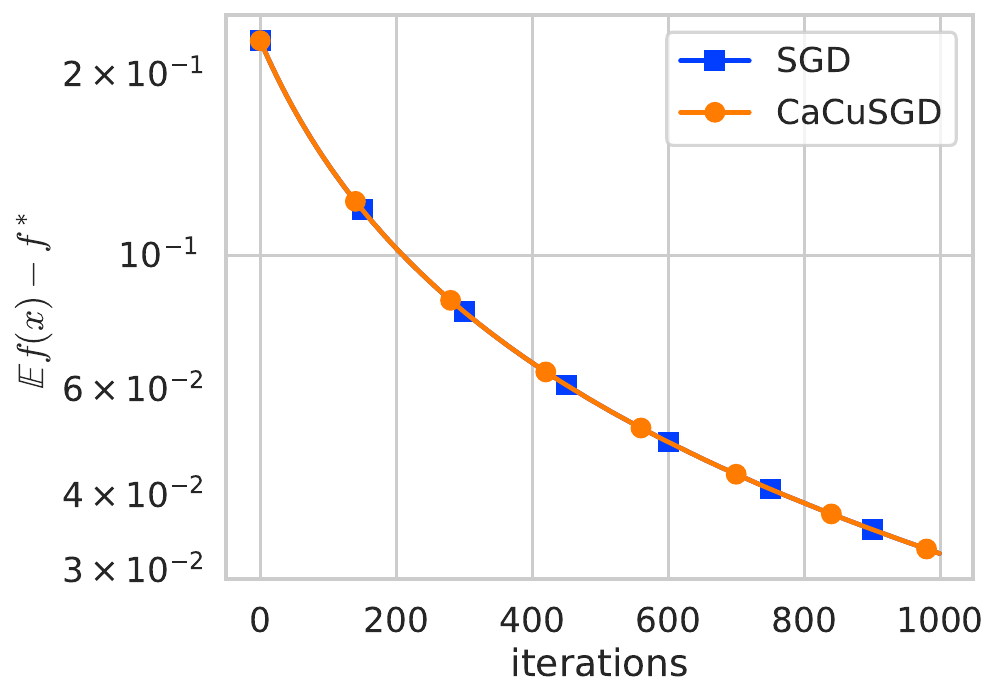}
    \end{minipage}
    \begin{minipage}[htp]{0.24\textwidth}
        \centering
        \includegraphics[width=1\linewidth]{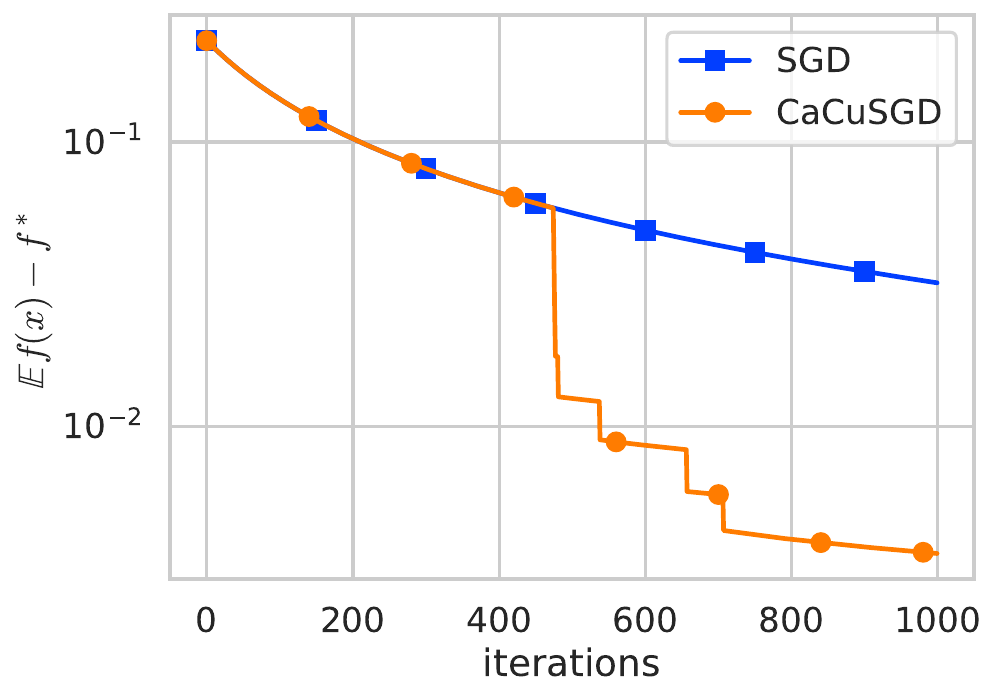}
    \end{minipage}
    \begin{minipage}[htp]{0.24\textwidth}
        \centering
        \includegraphics[width=1\linewidth]{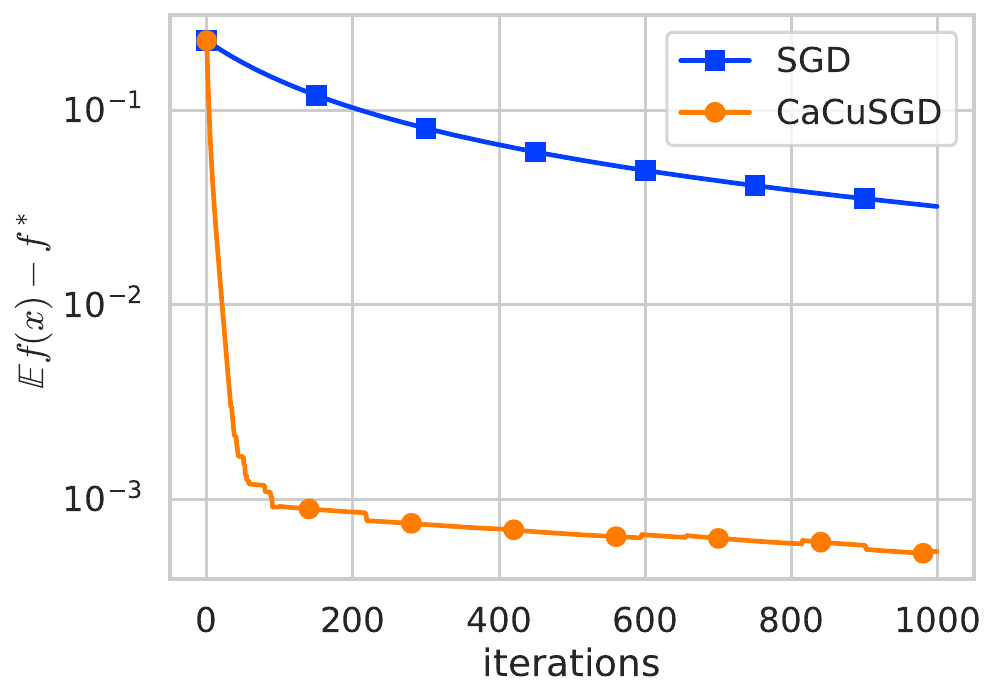}
    \end{minipage}
    \begin{minipage}[htp]{0.24\textwidth}
        \centering
        \includegraphics[width=1\linewidth]{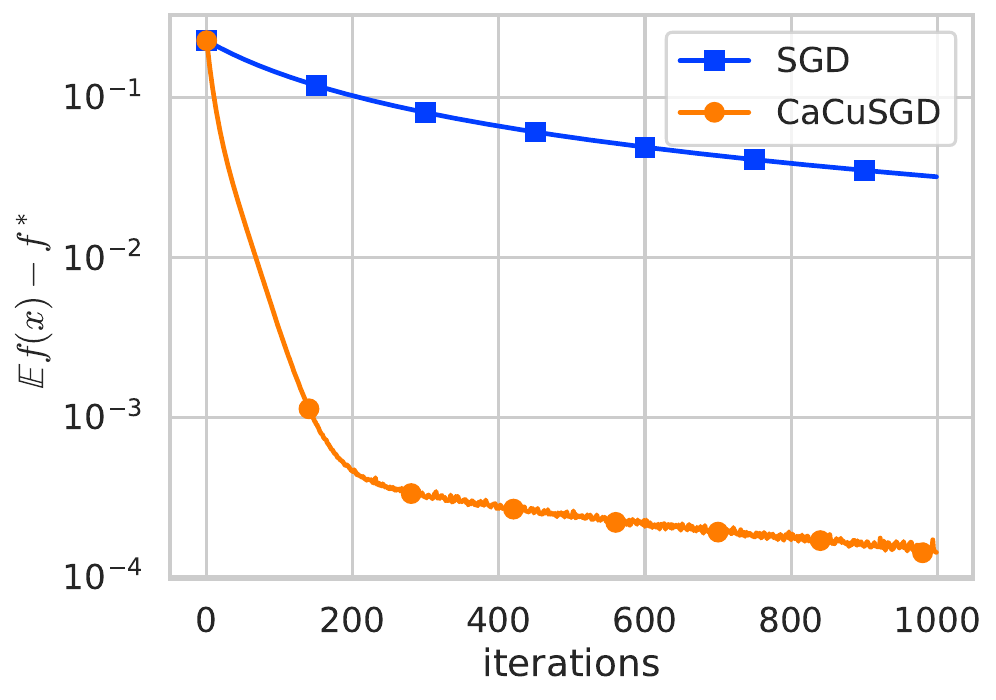}
    \end{minipage}
    \caption{$\widehat L = 100$, $\widehat H = \{0.1, 1, 10, 100\}$ (from left to right)}
    \label{fig:hatLhundred}
\end{figure}

\begin{figure}[H]
    \centering
    \begin{minipage}[htp]{0.24\textwidth}
        \centering
        \includegraphics[width=1\linewidth]{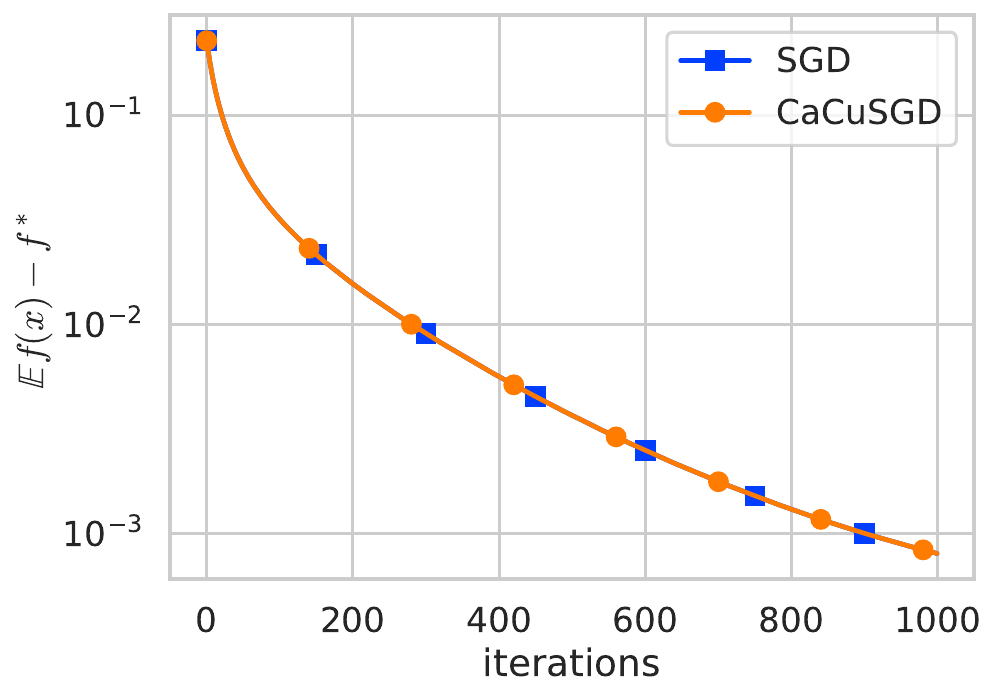}
    \end{minipage}
    \begin{minipage}[htp]{0.24\textwidth}
        \centering
        \includegraphics[width=1\linewidth]{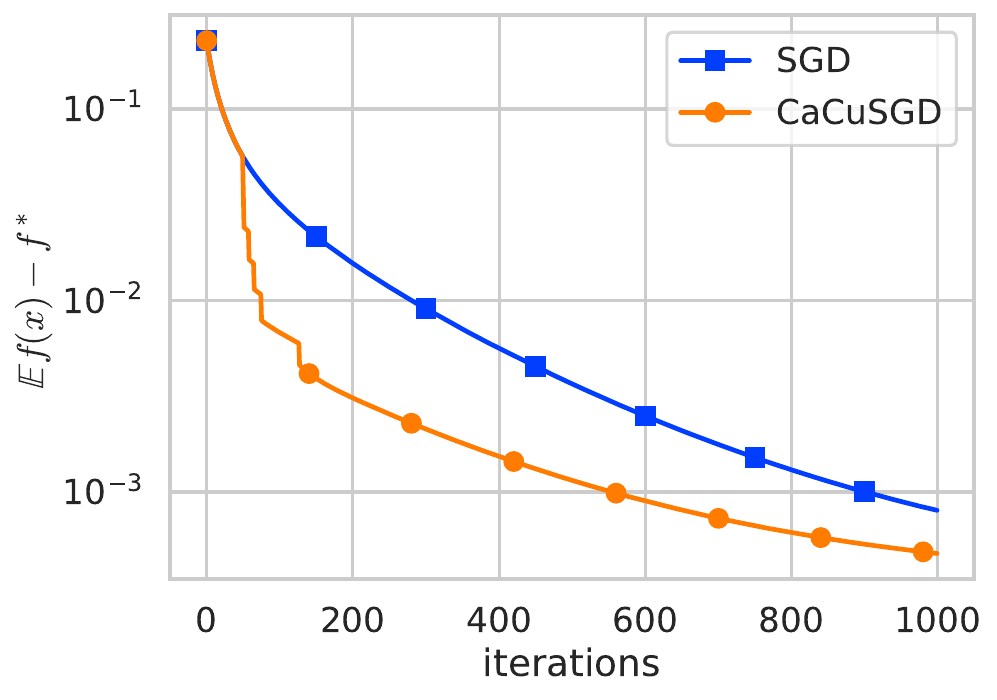}
    \end{minipage}
    \begin{minipage}[htp]{0.24\textwidth}
        \centering
        \includegraphics[width=1\linewidth]{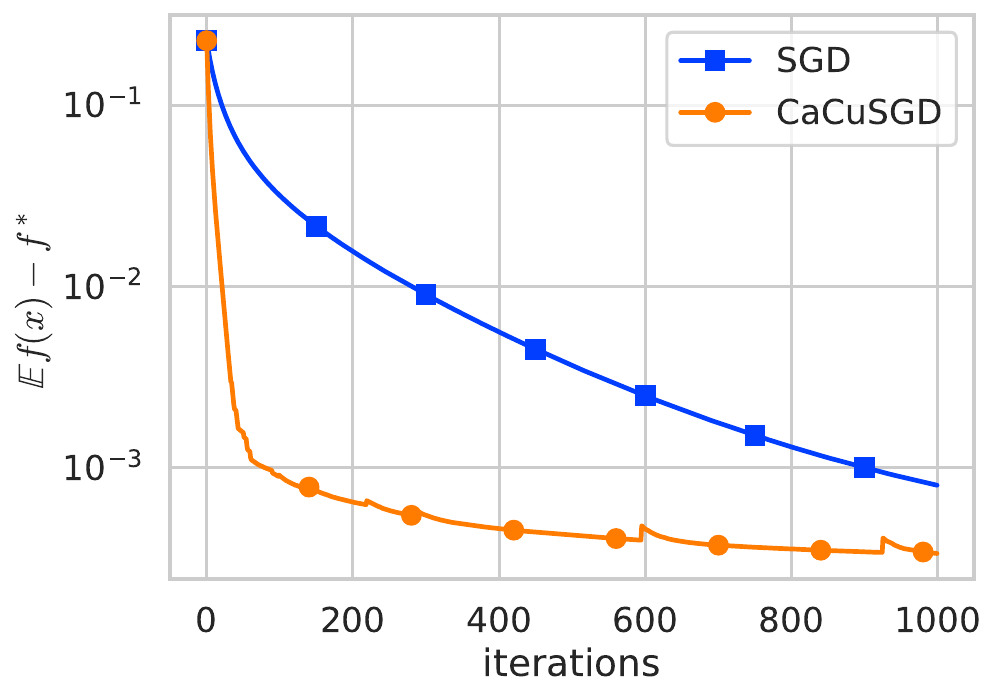}
    \end{minipage}
    \begin{minipage}[htp]{0.24\textwidth}
        \centering
        \includegraphics[width=1\linewidth]{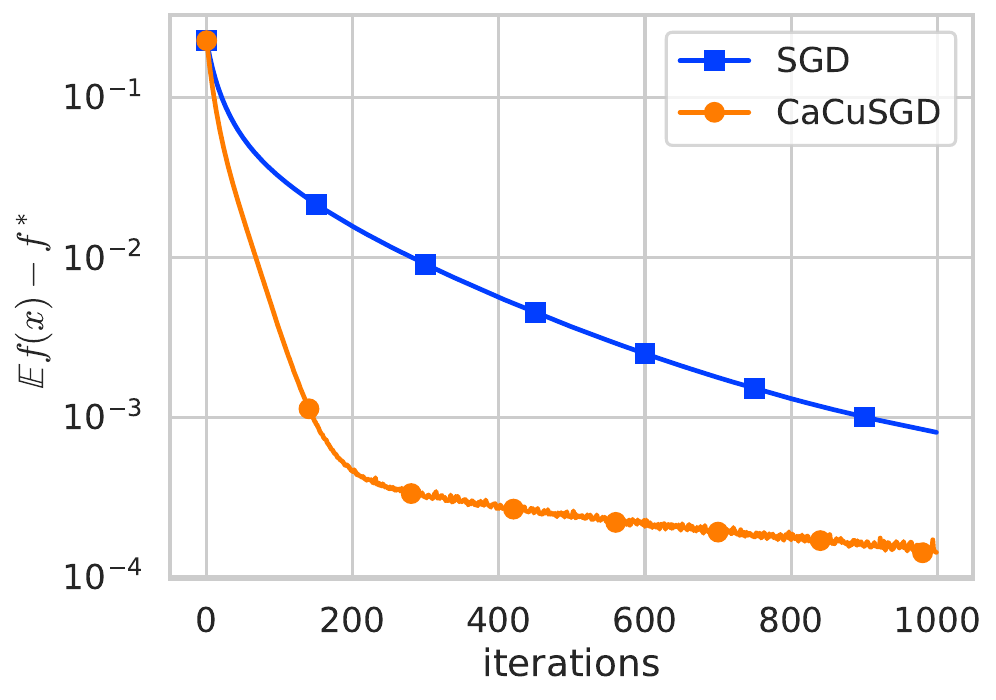}
    \end{minipage}
    \caption{$\widehat L = 10$, $\widehat H = \{0.1, 1, 10, 100\}$ (from left to right)}
    \label{fig:hatL10}
\end{figure}

\begin{figure}[H]
    \centering
    \begin{minipage}[htp]{0.24\textwidth}
        \centering
        \includegraphics[width=1\linewidth]{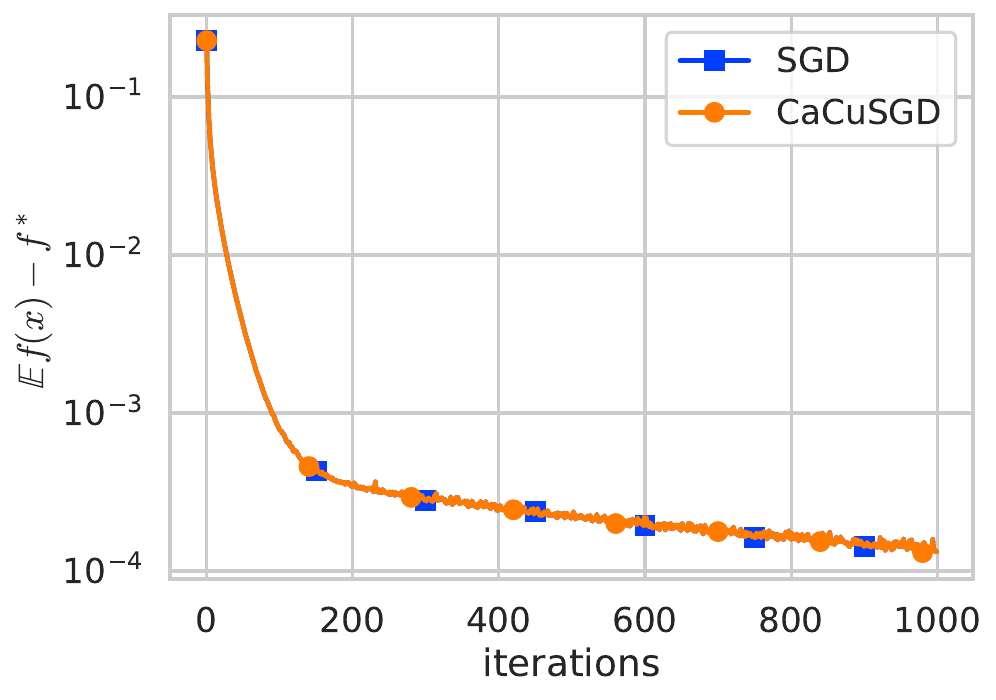}
    \end{minipage}
    \begin{minipage}[htp]{0.24\textwidth}
        \centering
        \includegraphics[width=1\linewidth]{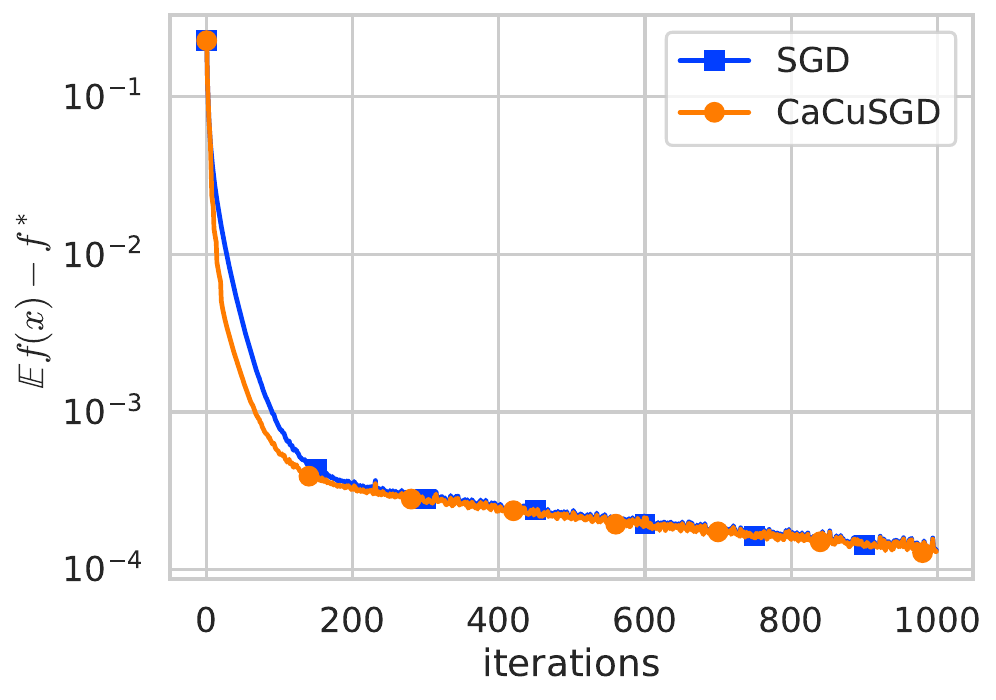}
    \end{minipage}
    \begin{minipage}[htp]{0.24\textwidth}
        \centering
        \includegraphics[width=1\linewidth]{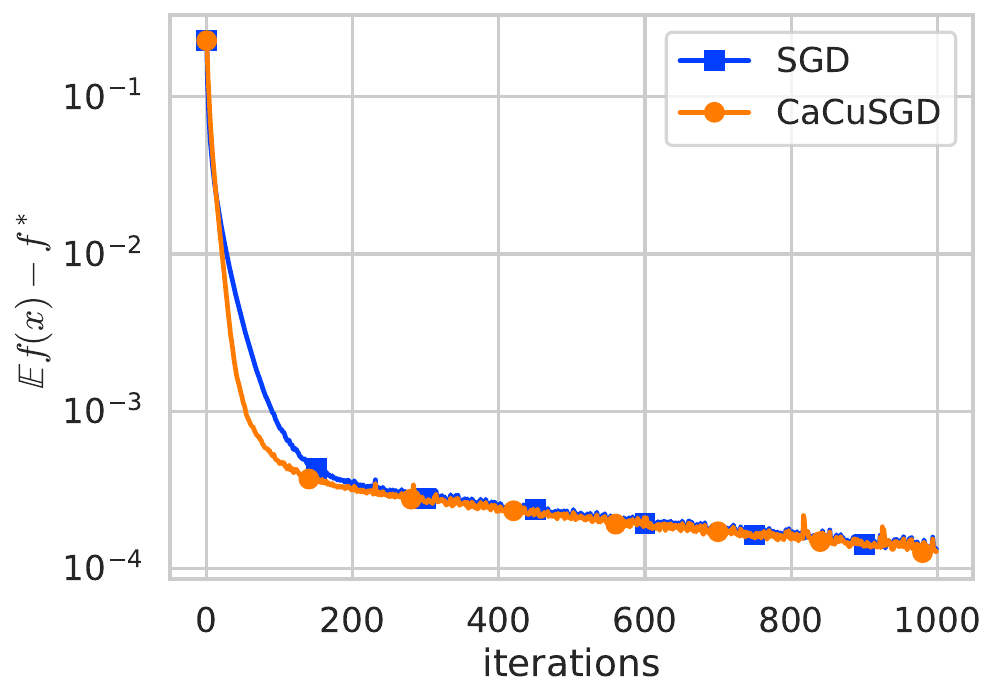}
    \end{minipage}
    \begin{minipage}[htp]{0.24\textwidth}
        \centering
        \includegraphics[width=1\linewidth]{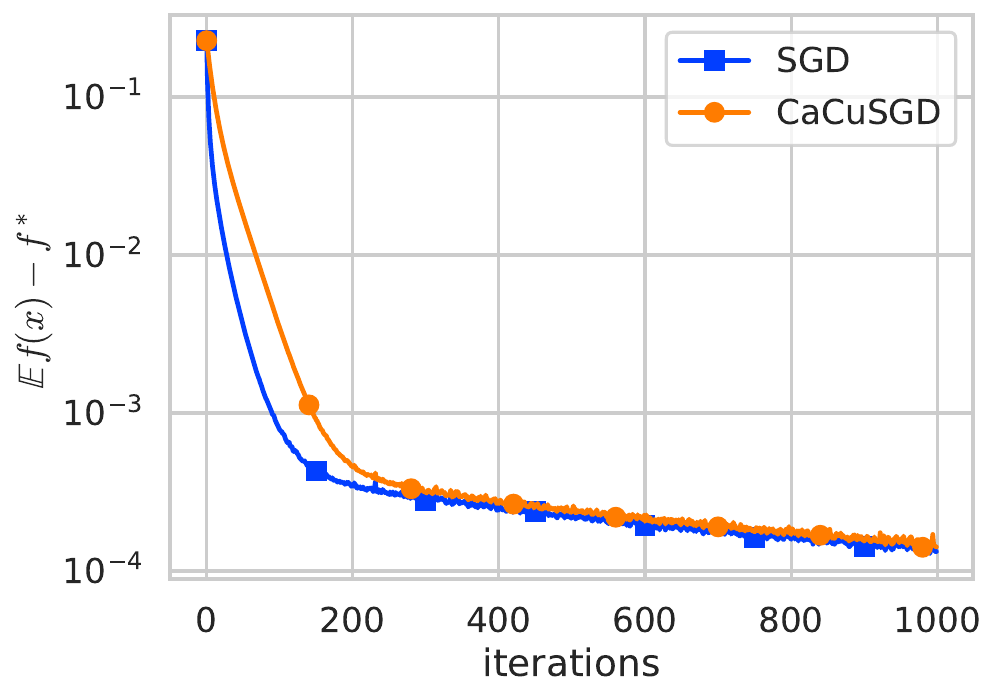}
    \end{minipage}
    \caption{$\widehat L = 1$, $\widehat H = \{0.1, 1, 10, 100\}$ (from left to right)}
    \label{fig:hatL1}
\end{figure}

Even with a conservative stepsize (large $\widehat L$), our scheme rapidly reaches the noise floor. In the rightmost panel ($\widehat H=100$), the floor remains the same but is attained sooner: smaller stepsizes yield a lower floor yet normally slow convergence, whereas \algname{CaCuSGD} reaches that (smaller) floor faster. Thus one can fix a small stepsize to target a low floor without elaborate tuning, in another words, the method effectively acts as an implicit scheduler.












\bibliographystyle{apalike}
\bibliography{refs}

\begin{thebibliography}{}

\bibitem[Agafonov et~al., 2025]{agafonov2025simple}
Agafonov, A., Ryspayev, V., Horv{\'a}th, S., Gasnikov, A., Tak{\'a}{\v{c}}, M., and Hanzely, S. (2025).
\newblock Simple stepsize for quasi-newton methods with global convergence guarantees.
\newblock {\em arXiv preprint arXiv:2508.19712}.

\bibitem[Agarwal et~al., 2016]{agarwal2016second}
Agarwal, N., Bullins, B., and Hazan, E. (2016).
\newblock Second-order stochastic optimization in linear time.
\newblock {\em stat}, 1050:15.

\bibitem[Bekas et~al., 2007]{bekas2007estimator}
Bekas, C., Kokiopoulou, E., and Saad, Y. (2007).
\newblock An estimator for the diagonal of a matrix.
\newblock {\em Applied numerical mathematics}, 57(11-12):1214--1229.

\bibitem[Berahas et~al., 2022]{berahas2022quasi}
Berahas, A., Jahani, M., Richt{\'a}rik, P., and Tak{\'a}{\v{c}}, M. (2022).
\newblock Quasi-{N}ewton methods for machine learning: forget the past, just sample.
\newblock {\em Optimization Methods and Software}, 37(5):1668--1704.

\bibitem[Chayti et~al., 2023]{chayti2023unified}
Chayti, E.~M., Doikov, N., and Jaggi, M. (2023).
\newblock Unified convergence theory of stochastic and variance-reduced cubic newton methods.
\newblock {\em arXiv preprint arXiv:2302.11962}.

\bibitem[Chen et~al., 2023]{chen2023generalized}
Chen, Z., Zhou, Y., Liang, Y., and Lu, Z. (2023).
\newblock Generalized-smooth nonconvex optimization is as efficient as smooth nonconvex optimization.
\newblock In {\em International Conference on Machine Learning}, pages 5396--5427. PMLR.

\bibitem[Defazio and Mishchenko, 2023]{defazioLearningRateFreeLearningDAdaptation2023}
Defazio, A. and Mishchenko, K. (2023).
\newblock Learning-rate-free learning by d-adaptation.
\newblock In {\em Proceedings of the 40th International Conference on Machine Learning}, ICML'23. JMLR.org.

\bibitem[Ghanbari and Scheinberg, 2018]{ghanbari2018proximal}
Ghanbari, H. and Scheinberg, K. (2018).
\newblock Proximal quasi-{N}ewton methods for regularized convex optimization with linear and accelerated sublinear convergence rates.
\newblock {\em Computational Optimization and Applications}, 69:597--627.

\bibitem[Gorbunov et~al., 2020]{gorbunov2020unified}
Gorbunov, E., Hanzely, F., and Richt{\'a}rik, P. (2020).
\newblock A unified theory of sgd: Variance reduction, sampling, quantization and coordinate descent.
\newblock In {\em International Conference on Artificial Intelligence and Statistics}, pages 680--690. PMLR.

\bibitem[Griewank, 1981]{griewank1981modification}
Griewank, A. (1981).
\newblock The modification of {Newton}’s method for unconstrained optimization by bounding cubic terms.
\newblock Technical report, Technical report NA/12.

\bibitem[Ivgi et~al., 2023]{ivgi2023dog}
Ivgi, M., Hinder, O., and Carmon, Y. (2023).
\newblock {D}o{G} is {SGD}'s best friend: A parameter-free dynamic step size schedule.
\newblock {\em arXiv:2302.12022}.

\bibitem[Jahani and Rusakov, 2022]{jahani2022doubly}
Jahani, M. and Rusakov, S. (2022).
\newblock Doubly adaptive scaled algorithm for machine learning using 2nd order information.
\newblock In {\em International Conference on Learning Representations}.

\bibitem[Jin et~al., 2024]{jin2024exact}
Jin, Q., Jiang, R., and Mokhtari, A. (2024).
\newblock Non-asymptotic global convergence rates of {BFGS} with exact line search.

\bibitem[Jin et~al., 2022]{jin2022sharpened}
Jin, Q., Koppel, A., Rajawat, K., and Mokhtari, A. (2022).
\newblock Sharpened quasi-{N}ewton methods: Faster superlinear rate and larger local convergence neighborhood.
\newblock In {\em International Conference on Machine Learning}, pages 10228--10250. PMLR.

\bibitem[Kamzolov et~al., 2023]{kamzolov2023cubic}
Kamzolov, D., Ziu, K., Agafonov, A., and Tak{\'a}{\v{c}}, M. (2023).
\newblock Cubic regularization is the key! {T}he first accelerated quasi-{N}ewton method with a global convergence rate of $ \mathcal{O} (k^{-2}) $ for convex functions.
\newblock {\em arXiv preprint arXiv:2302.04987}.

\bibitem[Khaled et~al., 2023]{Khaled2023DoWGUA}
Khaled, A., Mishchenko, K., and Jin, C. (2023).
\newblock Dowg unleashed: An efficient universal parameter-free gradient descent method.
\newblock {\em ArXiv}, abs/2305.16284.

\bibitem[Malitsky and Mishchenko, 2020]{malitsky20adaptive}
Malitsky, Y. and Mishchenko, K. (2020).
\newblock Adaptive gradient descent without descent.
\newblock In {\em Proceedings of the 37th International Conference on Machine Learning}, volume 119 of {\em Proceedings of Machine Learning Research}, pages 6702--6712. PMLR.

\bibitem[Malitsky and Mishchenko, 2024]{malitsky2024adaptive}
Malitsky, Y. and Mishchenko, K. (2024).
\newblock Adaptive proximal gradient method for convex optimization.
\newblock {\em Advances in Neural Information Processing Systems}, 37:100670--100697.

\bibitem[Mishchenko, 2023]{mishchenko2023regularized}
Mishchenko, K. (2023).
\newblock Regularized newton method with global convergence.
\newblock {\em SIAM Journal on Optimization}, 33(3):1440--1462.

\bibitem[Mishchenko and Defazio, 2024]{mishchenko2024prodigy}
Mishchenko, K. and Defazio, A. (2024).
\newblock Prodigy: An expeditiously adaptive parameter-free learner.
\newblock In {\em Forty-first International Conference on Machine Learning}.

\bibitem[Mishkin et~al., 2024]{mishkin2024directional}
Mishkin, A., Khaled, A., Wang, Y., Defazio, A., and Gower, R. (2024).
\newblock Directional smoothness and gradient methods: Convergence and adaptivity.
\newblock {\em Advances in Neural Information Processing Systems}, 37:14810--14848.

\bibitem[Nesterov, 2008]{nesterov2008accelerating}
Nesterov, Y. (2008).
\newblock Accelerating the cubic regularization of {N}ewton’s method on convex problems.
\newblock {\em Mathematical Programming}, 112(1):159--181.

\bibitem[Nesterov, 2013]{nesterov2013gradient}
Nesterov, Y. (2013).
\newblock Gradient methods for minimizing composite functions.
\newblock {\em Mathematical Programming}, 140(1):125--161.

\bibitem[Nesterov, 2022]{nesterov2019inexact}
Nesterov, Y. (2022).
\newblock Inexact basic tensor methods for some classes of convex optimization problems.
\newblock {\em Optimization Methods and Software}, 37(3):878--906.

\bibitem[Nesterov and Polyak, 2006]{nesterov2006cubic}
Nesterov, Y. and Polyak, B.~T. (2006).
\newblock Cubic regularization of {Newton} method and its global performance.
\newblock {\em Mathematical Programming}, 108(1):177--205.

\bibitem[Nocedal and Wright, 2006]{nocedal2006numerical}
Nocedal, J. and Wright, S.~J. (2006).
\newblock {\em Numerical optimization}.
\newblock Springer.

\bibitem[Polyak, 1987]{polyak1987introduction}
Polyak, B. (1987).
\newblock {\em Introduction to Optimization}.
\newblock Optimization Software.

\bibitem[Rodomanov and Nesterov, 2021a]{rodomanov2021greedy}
Rodomanov, A. and Nesterov, Y. (2021a).
\newblock Greedy quasi-{N}ewton methods with explicit superlinear convergence.
\newblock {\em SIAM Journal on Optimization}, 31(1):785--811.

\bibitem[Rodomanov and Nesterov, 2021b]{rodomanov2021new}
Rodomanov, A. and Nesterov, Y. (2021b).
\newblock New results on superlinear convergence of classical quasi-{N}ewton methods.
\newblock {\em Journal of Optimization Theory and Applications}, 188:744--769.

\bibitem[Scheinberg and Tang, 2016]{scheinberg2016practical}
Scheinberg, K. and Tang, X. (2016).
\newblock Practical inexact proximal quasi-{N}ewton method with global complexity analysis.
\newblock {\em Mathematical Programming}, 160:495--529.

\bibitem[Scieur, 2024]{scieur2024adaptive}
Scieur, D. (2024).
\newblock Adaptive quasi-{N}ewton and anderson acceleration framework with explicit global (accelerated) convergence rates.
\newblock In {\em International Conference on Artificial Intelligence and Statistics}, pages 883--891. PMLR.

\bibitem[Sen and Mohan, 2023]{sen2023foplahd}
Sen, M. and Mohan, C.~K. (2023).
\newblock Foplahd: Federated optimization using locally approximated hessian diagonal.
\newblock In {\em International Conference on Big Data Analytics}, pages 235--245. Springer.

\bibitem[Smee et~al., 2025]{smee2025first}
Smee, O., Roosta, F., and Wright, S.~J. (2025).
\newblock First-ish order methods: Hessian-aware scalings of gradient descent.
\newblock {\em arXiv preprint arXiv:2502.03701}.

\bibitem[Thomsen and Doikov, 2024]{thomsen2024complexity}
Thomsen, D.~B. and Doikov, N. (2024).
\newblock Complexity of minimizing regularized convex quadratic functions.
\newblock {\em arXiv preprint arXiv:2404.17543}.

\bibitem[Wang et~al., 2024]{wang2024global}
Wang, S., Fadili, J., and Ochs, P. (2024).
\newblock Global non-asymptotic super-linear convergence rates of regularized proximal quasi-{N}ewton methods on non-smooth composite problems.
\newblock {\em arXiv preprint arXiv:2410.11676}.

\bibitem[Yao et~al., 2018]{yao2018hessian}
Yao, Z., Gholami, A., Lei, Q., Keutzer, K., and Mahoney, M.~W. (2018).
\newblock Hessian-based analysis of large batch training and robustness to adversaries.
\newblock {\em Advances in Neural Information Processing Systems}, 31.

\bibitem[Yao et~al., 2021]{yao2021adahessian}
Yao, Z., Gholami, A., Shen, S., Mustafa, M., Keutzer, K., and Mahoney, M. (2021).
\newblock Adahessian: An adaptive second order optimizer for machine learning.
\newblock In {\em proceedings of the AAAI conference on artificial intelligence}, volume~35, pages 10665--10673.

\bibitem[Zhang et~al., 2020a]{zhang2020improved}
Zhang, B., Jin, J., Fang, C., and Wang, L. (2020a).
\newblock Improved analysis of clipping algorithms for non-convex optimization.
\newblock In {\em Advances in Neural Information Processing Systems}.

\bibitem[Zhang et~al., 2020b]{zhang2020why}
Zhang, J., He, T., Sra, S., and Jadbabaie, A. (2020b).
\newblock Why gradient clipping accelerates training: A theoretical justification for adaptivity.
\newblock In {\em International Conference on Learning Representations}.

\end{thebibliography}

\newpage
\appendix

\section{Auxiliary results}

\begin{lemma}[Young’s inequalitys]\label{lem:young-pq}
Let $p,q>1$ with $\tfrac1p+\tfrac1q=1$. For all $a,b\ge0$,
\[
  ab  \le  \frac{a^p}{p} + \frac{b^q}{q}.
\]
\end{lemma}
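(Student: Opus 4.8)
The plan is to reduce the claim to the convexity of the exponential function (equivalently, the concavity of the logarithm). First I would dispose of the degenerate cases: if either $a=0$ or $b=0$, then $ab=0$ while the right-hand side $\frac{a^p}{p}+\frac{b^q}{q}$ is nonnegative, so the inequality holds trivially. Thus it suffices to treat $a,b>0$.

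For the main case, I would use the substitution $u=p\log a$ and $v=q\log b$, so that $a^p=e^{u}$ and $b^q=e^{v}$, and observe that $ab=e^{\log a+\log b}=e^{u/p+v/q}$. Since $\frac1p+\frac1q=1$ with $\frac1p,\frac1q>0$, the exponent $\frac{u}{p}+\frac{v}{q}$ is a convex combination of $u$ and $v$. Applying convexity of $\exp$ (Jensen's inequality at two points with weights $\frac1p,\frac1q$) then gives
\[
  ab=\exp\!\left(\tfrac1p u+\tfrac1q v\right)\le \tfrac1p e^{u}+\tfrac1q e^{v}=\frac{a^p}{p}+\frac{b^q}{q},
\]
which is exactly the claim.

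As a self-contained alternative that avoids logarithms, I would instead fix $b\ge0$ and study $\phi(a)\coloneqq \frac{a^p}{p}+\frac{b^q}{q}-ab$ on $[0,\infty)$. Differentiating, $\phi'(a)=a^{p-1}-b$ vanishes at $a_\star=b^{1/(p-1)}$, while $\phi''(a)=(p-1)a^{p-2}\ge0$ shows $\phi$ is convex, so $a_\star$ is the global minimizer. Using the conjugacy identity $q=\frac{p}{p-1}$ (equivalent to $\frac1p+\frac1q=1$) one checks that $a_\star^{p}=a_\star b=b^{q}$, whence $\phi(a_\star)=b^q\bigl(\tfrac1p+\tfrac1q\bigr)-b^q=0$; therefore $\phi(a)\ge0$ for all $a\ge0$, which is the inequality.

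There is no genuine obstacle, as the statement is elementary; the only points requiring care are the boundary cases $a=0$ or $b=0$ and the correct bookkeeping of the conjugacy relation $\frac1p+\frac1q=1$, which in the second proof must be rewritten as $q=p/(p-1)$ in order to evaluate the minimizer and collapse the constant to zero. Either argument is self-contained and invokes no earlier result from the paper.
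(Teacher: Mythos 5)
Your proposal is correct. Note that the paper states this lemma without any proof at all --- it is the classical Young inequality, listed among the auxiliary results --- so there is no paper argument to compare against. Both of your routes are standard and valid: the Jensen/convexity-of-$\exp$ argument is the textbook proof, and the calculus alternative also goes through, with the verification $a_\star^p = a_\star b = b^q$ handled correctly via $q = p/(p-1)$. One negligible nit on the second route: when $p<2$ the expression $\phi''(a)=(p-1)a^{p-2}$ is not defined at $a=0$, but convexity of $\phi$ on $(0,\infty)$ (or just the sign change of $\phi'$ at $a_\star$) together with continuity at $0$ already yields the global minimum claim, so nothing is lost.
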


\begin{lemma}For all $b>0, \xi>0, \beta>0$,
\[
  \frac{b}{\sqrt{\xi}} \le \frac{\beta}{\xi}+\frac{b^2}{4\beta}.
\]
\end{lemma}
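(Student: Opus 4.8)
The plan is to recognize the claimed bound as the special case $p=q=2$ of the weighted AM--GM inequality, i.e.\ the preceding Young's inequality (\cref{lem:young-pq}). Concretely, I would factor the left-hand side as a product of two nonnegative quantities whose squares reproduce the two terms on the right. Writing $\frac{b}{\sqrt\xi} = \Bigl(\frac{\sqrt{2\beta}}{\sqrt\xi}\Bigr)\cdot\Bigl(\frac{b}{\sqrt{2\beta}}\Bigr)$ and applying $uv \le \tfrac12 u^2 + \tfrac12 v^2$ with $u = \sqrt{2\beta}/\sqrt\xi$ and $v = b/\sqrt{2\beta}$ yields exactly $\frac{\beta}{\xi} + \frac{b^2}{4\beta}$, since $\tfrac12 u^2 = \beta/\xi$ and $\tfrac12 v^2 = b^2/(4\beta)$.

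Alternatively, and entirely self-contained, I would complete the square. Setting $t = 1/\sqrt\xi > 0$, the difference of the two sides equals $\beta t^2 - b t + \frac{b^2}{4\beta} = \bigl(\sqrt\beta\, t - \frac{b}{2\sqrt\beta}\bigr)^2 \ge 0$, which establishes the inequality for all $b,\xi,\beta > 0$; equality holds iff $t = b/(2\beta)$, i.e.\ $\sqrt\xi = 2\beta/b$. The positivity hypotheses on $\beta$ and $\xi$ are used only to ensure that the substitution $t = 1/\sqrt\xi$ and the division by $\sqrt\beta$ are well defined.

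There is no genuine obstacle here: the statement is an elementary two-line inequality, and the only thing to watch is matching the constants so that the cross term produced by squaring cancels correctly (hence the $1/(4\beta)$ rather than $1/(2\beta)$). I would present the completing-the-square version, since it is fully self-contained and makes the equality case transparent.
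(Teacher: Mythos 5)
Your proposal is correct and matches the paper's proof: the paper applies Young's inequality $2xy\le x^2+y^2$ with $x=\sqrt{\beta/\xi}$ and $y=b/(2\sqrt{\beta})$, which is exactly your first factorization up to a rescaling of the two factors, and your completing-the-square variant is just that same inequality with the proof of $2xy\le x^2+y^2$ inlined. Both routes are valid, the constants check out, and your observation about the equality case $\sqrt{\xi}=2\beta/b$ is a small bonus the paper omits.
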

\begin{proof}
    Young’s inequality $2xy\le x^2+y^2$ with
    $x=\sqrt{\beta/\xi}$ and $y=b/(2\sqrt{\beta})$ gives
    $ \frac{b}{\sqrt{\xi}}=2xy\le x^2+y^2=\frac{\beta}{\xi}+\frac{b^2}{4\beta}$.
\end{proof}

\begin{lemma}\label{lem:three-halves-split}
For all $a,b\in\R^d$ and any norm $\|\cdot\|$,
\[
  \|a\|^{3/2}  \le  \sqrt{2}\bigl(\|a-b\|^{3/2} + \|b\|^{3/2}\bigr).
\]
Equivalently,
\[
  -\|a-b\|^{3/2}  \le  -\tfrac{1}{\sqrt{2}}\|a\|^{3/2} + \|b\|^{3/2}.
\]
\end{lemma}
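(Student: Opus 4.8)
The plan is to reduce the vector inequality to a scalar one via the triangle inequality and then invoke convexity of the map $t\mapsto t^{3/2}$ on $[0,\infty)$. First I would apply the triangle inequality $\|a\|\le\|a-b\|+\|b\|$ and raise both sides to the power $3/2$ (which is monotone on $[0,\infty)$), giving $\|a\|^{3/2}\le(\|a-b\|+\|b\|)^{3/2}$. Writing $u=\|a-b\|\ge0$ and $v=\|b\|\ge0$, it then suffices to establish the scalar bound $(u+v)^{3/2}\le\sqrt2\,(u^{3/2}+v^{3/2})$.

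For the scalar step I would use the convexity of $\phi(t)=t^{3/2}$, which holds because the exponent exceeds $1$. Jensen's inequality applied to the two points $u,v$ with equal weights yields $\bigl(\tfrac{u+v}{2}\bigr)^{3/2}\le\tfrac12\bigl(u^{3/2}+v^{3/2}\bigr)$; multiplying through by $2^{3/2}$ gives exactly $(u+v)^{3/2}\le 2^{1/2}\bigl(u^{3/2}+v^{3/2}\bigr)$, i.e. the constant $\sqrt2$. This is nothing but the general power-mean fact $(u+v)^p\le 2^{p-1}(u^p+v^p)$ specialized to $p=3/2$, where $2^{p-1}=\sqrt2$.

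The equivalent restatement then follows by pure rearrangement: dividing $\|a\|^{3/2}\le\sqrt2(\|a-b\|^{3/2}+\|b\|^{3/2})$ by $\sqrt2$ and moving the $\|a-b\|^{3/2}$ term to the left-hand side yields $-\|a-b\|^{3/2}\le-\tfrac{1}{\sqrt2}\|a\|^{3/2}+\|b\|^{3/2}$.

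Since every step is a one-line application of a standard inequality, I do not expect a genuine obstacle; the only point requiring slight care is extracting the sharp constant $\sqrt2$ rather than a looser one, which is precisely what the convexity (power-mean) argument delivers. An alternative route would be to minimize the ratio $(u^{3/2}+v^{3/2})/(u+v)^{3/2}$ directly over $u,v\ge0$, but the convexity argument makes such a computation unnecessary.
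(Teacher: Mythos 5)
Your proof is correct: the triangle inequality followed by the convexity (power-mean) bound $(u+v)^{3/2}\le 2^{1/2}\bigl(u^{3/2}+v^{3/2}\bigr)$ gives the first inequality with the sharp constant $\sqrt{2}$, and the second form is indeed just a rearrangement after dividing by $\sqrt{2}$. The paper states this lemma without any proof (it is listed as a standard auxiliary fact), and your argument is precisely the canonical derivation it implicitly relies on, so there is nothing to reconcile.
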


\newpage


\newpage
\section{Missing Proofs for Accelerated CaCuN} \label{sec:cacun ap}

Following~\cite{nesterov2006cubic}, define the mapping
\begin{equation}
    T_M(x) \stackrel{\mathrm{def}}{=} \argmin_{y}\cbr*{  f(x) + \inp{\nabla f(x)}{y-x} + \frac{1}{2}\inp{\nabla^2 f(x)(y-x)}{y-x} + \frac{M}{3}\|y-x\|^3}.
\end{equation}
Then $T=T_M(x)$ is the unique solution of
\begin{equation} \label{eq:cubicstepopt}
    \nabla f(x) + \nabla^2 f(x)(T-x) + M \|T-x\|(T-x) = 0.
\end{equation}
Denote $r_M(x)=\|x-T_M(x)\|$. Using~\eqref{eq:cubicstepopt} and the $2H$-Lipschitz Hessian bound~\cref{eq:grad_dif_hess}, we get
\begin{align} \label{eq:gnormubound}
    \|\nabla f(T)\|
    &= \big\|\nabla f(T) - \nabla f(x) - \nabla^2 f(x)(T-x) - Mr_M(x)(T-x)\big\| \notag
    \\
    &\leq (H+M) r_M(x)^2.
\end{align}

\begin{lemma}\cite[Lemma 4]{nesterov2006cubic}
    \label{lem:anycubic}
    For any $x \in \R^d$
    \begin{equation*}
        T_{H_k}(x^{k}) \leq \min_y\sbr*{f(y) + \frac{H + H_k}{3}\|y-x^k\|^3}
    \end{equation*}
\end{lemma}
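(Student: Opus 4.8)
The plan is to reproduce the cited Nesterov--Polyak argument: bound $f(T_{H_k}(x^k))$ by the minimal value of the full cubic model anchored at $x^k$, and then majorize that minimal value by $\min_y\sbr*{f(y)+\frac{H+H_k}{3}\norm{y-x^k}^3}$. Throughout I would write $x=x^k$, $T=T_{H_k}(x)$, let $q(z)=f(x)+\inp{\nabla f(x)}{z-x}+\frac12\inp{\nabla^2 f(x)(z-x)}{z-x}$ denote the second-order Taylor polynomial, and let $m(z)=q(z)+\frac{H_k}{3}\norm{z-x}^3$ denote the cubic model, so that $T$ minimizes $m$ by definition (\eqref{eq:cubicstepopt}) and $m(x)=f(x)$.

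First I would record the two bounds furnished by \cref{as:hessian_smooth}: the majorant \eqref{eq:taylor}, $f(z)\le q(z)+\frac H3\norm{z-x}^3$, and its twin lower bound $f(z)\ge q(z)-\frac H3\norm{z-x}^3$, both consequences of the $2H$-Lipschitzness of $\nabla^2 f$ (the lower bound also follows by integrating \eqref{eq:grad_dif_hess}). The upper bound gives $f(T)\le q(T)+\frac H3\norm{T-x}^3=m(T)+\frac{H-H_k}{3}\norm{T-x}^3$. In the regime relevant to the method—the \algname{CRN} step, where the effective cubic constant equals $2H$ (see \eqref{eq:cubicstepopt}), so $H_k\ge H$—the correction is nonpositive and hence $f(T)\le m(T)=\min_z m(z)$.

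Next I would majorize the model minimum by evaluating $m$ at $y$ itself, $\min_z m(z)\le m(y)=q(y)+\frac{H_k}{3}\norm{y-x}^3$, and convert the quadratic part with the lower Taylor bound, $q(y)\le f(y)+\frac H3\norm{y-x}^3$. Combining the two displays yields $f(T)\le f(y)+\frac{H+H_k}{3}\norm{y-x}^3$ for every $y$, and taking the infimum over $y$ is exactly the claim.

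The only delicate point is the cubic remainder $\frac{H-H_k}{3}\norm{T-x}^3$: it is harmless precisely when the model majorizes $f$ (the case $H_k\ge H$ used above). Covering a general $H_k$—the full strength of the Nesterov--Polyak estimate—requires keeping a free parameter $\tau\in[0,1]$ and testing the model along the segment $z_\tau=x+\tau(y-x)$, where convexity (\cref{as:convexity}) makes $\nabla^2 f(x)\succeq0$ so that $q(z_\tau)\le(1-\tau)f(x)+\tau q(y)$, and then balancing $\norm{T-x}^3$ against $\norm{y-x}^3$ through the weighted Young inequality $\norm{T-x}^2\norm{y-x}\le\frac23\norm{T-x}^3+\frac13\norm{y-x}^3$ (\cref{lem:young-pq} with $p=\tfrac32,q=3$) together with the descent estimate $m(T)\le m(x)=f(x)$ controlling $\norm{T-x}$. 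Making the constants land exactly on $\frac{H+H_k}{3}$ is the main bookkeeping obstacle.
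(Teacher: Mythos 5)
Your proof is correct and takes essentially the same route as the paper: the heart of both arguments is the lower Taylor estimate $q(y)\le f(y)+\tfrac{H}{3}\norm{y-x^k}^3$ furnished by \cref{as:hessian_smooth}, to which one adds $\tfrac{H_k}{3}\norm{y-x^k}^3$ and minimizes over $y$. If anything you are more careful than the paper, which writes $f(T_{H_k}(x^k))$ as \emph{equal} to the model minimum; your extra step $f(T)\le m(T)$ via \eqref{eq:taylor} under $H_k\ge H$ supplies the justification actually needed in the paper's application (the \algname{CRN} step with constant $2H$), and the general-$H_k$ concern in your last paragraph dissolves once the left-hand side is read, as in \cite[Lemma~4]{nesterov2006cubic}, as the model's minimal value rather than $f(T)$.
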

\begin{proof}
    By \cref{as:hessian_smooth}    
    \begin{align*}
        f(y) \leq f(x) + \inp{\nabla f(x)}{y-x} + \frac{1}{2}\inp{\nabla^2 f(x)(y-x)}{y-x} + \frac{H}{3}\norm{y-x}^3.
    \end{align*}
    
    The lower bound in the above implies

    \begin{equation}
        \label{res}
        f(x) + \inp{\nabla f(x)}{y-x} + \frac{1}{2}\inp{\nabla^2 f(x)(y-x)}{y-x} \leq f(y) + \frac{H}{3}\|y-x\|^3
    \end{equation}
    
    Let
    
    $$
        T_{H_k}(x^{k}) = \argmin_y \sbr*{f(x^k) + \inp{\nabla f(x^k)}{y-x^k} + \frac{1}{2}\inp{\nabla^2 f(x^k)(y-x^k)}{y-x^k} + \frac{H_k}{3}\|y-x^k\|^3}
    $$
    
    Then adding $\frac{H_k}{3}\|y-x^k\|^3$ to both sides of \eqref{res} and minimizing w.r.t $y$ gives
    
    \begin{multline}
        f(T_{H_k}(x^{k})) = \min_y \sbr*{f(x^k) + \inp{\nabla f(x^k)}{y-x^k} + \frac{1}{2}\inp{\nabla^2 f(x^k)(y-x^k)}{y-x^k} + \frac{M_k}{3}\|y-x^k\|^3}
        \\
        \leq \min_y\sbr*{f(y) + \frac{H + H_k}{3}\|y-x^k\|^3}
    \end{multline}
\end{proof}

\begin{lemma}\cite[Lemma 6]{nesterov2008accelerating}
\label{lem:nesterov6}
    If $M \ge 2H$, then
    \begin{equation}
        \langle \nabla f(T), x - T\rangle \ge \frac{1}{\sqrt{H+M}} \|\nabla f(T)\|^{3/2}.
    \end{equation}
\end{lemma}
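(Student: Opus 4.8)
The plan is to work directly from the optimality condition \eqref{eq:cubicstepopt} together with the residual bound underlying \eqref{eq:gnormubound}. Writing $w := x - T$, $r := r_M(x) = \|w\|$, and $g_r := \nabla f(T) - \nabla f(x) - \nabla^2 f(x)(T-x)$, condition \eqref{eq:cubicstepopt} gives $\nabla f(x) + \nabla^2 f(x)(T-x) = M r\, w$, hence $\nabla f(T) = g_r + M r\, w$, while \eqref{eq:grad_dif_hess} yields $\|g_r\| \le H r^2$. Taking the inner product with $w$ produces the exact identity $\langle \nabla f(T), x - T\rangle = \langle g_r, w\rangle + M r^3$, which is already $\ge (M-H)r^3 > 0$ since $M \ge 2H$; in particular the left-hand side is positive, so I may later raise the target inequality to convenient powers.

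First I would reduce to a worst-case scalar problem. For fixed $r$ and fixed $\langle g_r, w\rangle$, the quantity $\|\nabla f(T)\|^2 = \|g_r\|^2 + 2Mr\langle g_r,w\rangle + M^2 r^4$ is increasing in $\|g_r\|$, whereas $\langle\nabla f(T), x-T\rangle$ does not depend on $\|g_r\|$; hence the claim is hardest at the extreme $\|g_r\| = Hr^2$. Parametrizing the alignment of $g_r$ with $w$ by $s = \cos\angle(g_r,w) \in [-1,1]$, so that $\langle g_r, w\rangle = Hr^3 s$, I obtain
\[
\langle \nabla f(T), x-T\rangle = r^3(M + Hs), \qquad \|\nabla f(T)\|^2 = r^4\bigl(M^2 + 2HMs + H^2\bigr).
\]
Substituting into the claim, cancelling $r^3$, and raising to the power $4/3$ reduces everything to the single-variable inequality
\[
(M + Hs)^{4/3}(M+H)^{2/3} \;\ge\; M^2 + 2HMs + H^2, \qquad s\in[-1,1].
\]

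The core step is this scalar inequality, and it is exactly here that $M \ge 2H$ enters. I would set $p := M + Hs \in [M-H, M+H]$ and note $M^2 + 2HMs + H^2 = 2Mp - (M-H)(M+H)$, so the inequality becomes $\eta(p) \ge 0$ with $\eta(p) := p^{4/3}(M+H)^{2/3} - 2Mp + (M-H)(M+H)$. Since $\eta''(p) = \tfrac49 p^{-2/3}(M+H)^{2/3} > 0$, $\eta$ is convex, and $\eta'$ vanishes only at $p^\star = \tfrac{27M^3}{8(M+H)^2}$. The elementary chain $p^\star \ge M+H \iff 27M^3 \ge 8(M+H)^3 \iff 3M \ge 2(M+H) \iff M \ge 2H$ shows that on $[M-H,M+H]$ the minimizer of the convex function $\eta$ sits at (or beyond) the right endpoint, so $\eta$ is nonincreasing there; a direct computation gives $\eta(M+H) = (M+H)\bigl[(M+H) - 2M + (M-H)\bigr] = 0$, whence $\eta \ge 0$ on the whole interval.

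The main obstacle is precisely this scalar inequality: the constant $\tfrac{1}{\sqrt{H+M}}$ is tight, with equality at $s=1$ (i.e.\ when $g_r$ is anti-aligned with $T-x$), so the crude chaining of $\langle\nabla f(T),x-T\rangle \ge (M-H)r^3$ with $\|\nabla f(T)\|\le(H+M)r^2$ is too lossy and must be replaced by the directional analysis above. The convexity argument is what makes the sharp threshold $M\ge 2H$ surface cleanly through the single comparison $p^\star \ge M+H$. Undoing the power manipulation and the worst-case reduction then delivers $\langle \nabla f(T), x - T\rangle \ge \tfrac{1}{\sqrt{H+M}}\|\nabla f(T)\|^{3/2}$, as claimed.
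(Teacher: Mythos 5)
Your proof is correct, and it reaches the sharp constant by a genuinely different route than the paper. The paper's proof squares the identity $\nabla f(T)-Mr(x-T)=g_r$ (with $g_r$ the Taylor residual, $r=\|x-T\|$) against the bound $\|g_r\|\le Hr^2$, expands to get $\|\nabla f(T)\|^2+2Mr\langle\nabla f(T),T-x\rangle+M^2r^4\le H^2r^4$, hence the lower bound $\langle\nabla f(T),x-T\rangle\ge \frac{1}{2Mr}\|\nabla f(T)\|^2+\frac{M^2-H^2}{2M}r^3$; it then holds $\|\nabla f(T)\|$ fixed and minimizes this expression over the feasible range $r\ge\bigl(\|\nabla f(T)\|/(H+M)\bigr)^{1/2}$ given by \eqref{eq:gnormubound}, using $M\ge 2H$ to show the $r$-derivative is nonnegative, so the minimum sits at the boundary where it evaluates exactly to $\|\nabla f(T)\|^{3/2}/\sqrt{H+M}$. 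You instead keep the decomposition $\nabla f(T)=g_r+Mrw$ exact, pass to the worst case $\|g_r\|=Hr^2$ (a legitimate reduction, since for fixed $\langle g_r,w\rangle$ only the right-hand side grows with $\|g_r\|$, and the constraint $|\langle g_r,w\rangle|\le Hr^3$ is preserved), and parametrize by the alignment $s$; the scale $r$ then cancels entirely, leaving the one-variable inequality $(M+Hs)^{4/3}(M+H)^{2/3}\ge M^2+2HMs+H^2$, which you settle by convexity of $\eta(p)$ with the threshold comparison $p^\star\ge M+H\iff M\ge 2H$. The two arguments consume the same ingredients \eqref{eq:cubicstepopt}--\eqref{eq:grad_dif_hess}, invoke $M\ge2H$ through equivalent threshold computations ($3(M-H)\ge M+H$ in the paper versus $3M\ge 2(M+H)$ in yours), and identify the same extremal configuration, namely the case where \eqref{eq:gnormubound} is tight (your $s=1$), at which equality holds. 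What your version buys: the scaling variable disappears instead of being minimized over subject to a constraint, the equality case is exhibited transparently, and your observation that the naive chaining $\langle\nabla f(T),x-T\rangle\ge(M-H)r^3$ with $\|\nabla f(T)\|\le(H+M)r^2$ only yields the constant $(M-H)/(H+M)^{3/2}$ (never sufficient for $H>0$) explains precisely why a finer directional analysis is required. What the paper's version buys: less algebra after the single squaring step, and a form of the bound with $\|\nabla f(T)\|$ kept explicit, which is the shape reused in the estimate-sequence argument that follows.
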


\begin{proof}
    Let $T=T_M(x)$ and Denote $r=r_M(x)=\|x-T_M(x)\|$. Using \eqref{eq:grad_dif_hess} and \eqref{eq:cubicstepopt},
    \begin{align*}
        H^2 r^4
        = \rbr*{H\|T-x\|^2}^2
        \geq \|\nabla f(T)-\nabla f(x)-\nabla^2 f(x)(T-x)\|^2
        = \norm*{\nabla f(T)+Mr(T-x)}^2,
    \end{align*}
    hence
    \[
        \|\nabla f(T)\|^2 + 2Mr \langle \nabla f(T),T-x\rangle +  M^2 r^4  \leq H^2 r^4.
    \]
    Rearranging,
    \begin{equation}\label{eq:decresenomin}
        \langle \nabla f(T), x - T\rangle \geq \frac{1}{2Mr} \|\nabla f(T)\|^2 + \frac{M^2 - H^2}{2M} r^3.
    \end{equation}
    For $M \geq 2H$, the derivative in $r$ of the right-hand side of \eqref{eq:decresenomin} is nonnegative by~\eqref{eq:gnormubound}:
    \[
        -\frac{1}{2Mr^2}\|\nabla f(T)\|^2 + \frac{3}{2M}(M^2-H^2)r^2
        \geq -\frac{1}{2Mr^2}\|\nabla f(T)\|^2 + \frac{(H+M)^2}{M}\frac{r^2}{2} \geq 0.
    \]
    Thus, its minimum is attained at the boundary point
    \[
        r = \left( \frac{1}{H+M} \|\nabla f(T)\| \right)^{1/2}.
    \]

\end{proof}

Introduce a sequence of estimate functions:
\begin{equation*}
    \psi_k(x) = \ell_k(x) + \frac{N}{3} \|x - x_0\|^3,\quad k = 1, 2, \dots
\end{equation*}
where $\ell_k(x)$ are linear functions and $N$ is a positive real parameter.

And a sequence of scaling parameters $\{A_k\}_{k=1}^{\infty}$:
\begin{equation*}
    A_{k+1} \coloneqq A_k + a_k\quad k = 1, 2, \dots
\end{equation*}

\begin{lemma}
    Let~\cref{as:convexity,as:bounded-level-set,as:hessian_smooth} hold. Then the iterates generated by~\cref{alg:acccacun} with  $M_0 \leq H$ satisfy the following
    \begin{equation}\label{ca}
        A_k f(x^k) \leq \psi_k^* \equiv \min_x \psi_k(x), \tag{C1}        
    \end{equation}
    \begin{equation}\label{cb}
        \psi_k(x) \leq A_k f(x) + \frac{2H + N}{3} \|x - x_0\|^3,\quad \forall x \in \R^d  \tag{C2}
    \end{equation}
\end{lemma}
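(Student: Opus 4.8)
The plan is to prove \eqref{ca} and \eqref{cb} simultaneously by induction on $k$, following the estimate-sequence technique of the accelerated cubic Newton method. The natural weights are $A_k=\frac{k(k+1)(k+2)}{6}$, so that $a_k=A_{k+1}-A_k=\frac{(k+1)(k+2)}{2}$ exactly matches the coefficient $\frac{(k+1)(k+2)}{2}$ that the algorithm adds to $\psi_k$, and the coupling in the algorithm rewrites as $y^k=\frac{A_k}{A_{k+1}}x^k+\frac{a_k}{A_{k+1}}v^k$, i.e. $A_{k+1}y^k=A_kx^k+a_kv^k$, since $\frac{A_k}{A_{k+1}}=\frac{k}{k+3}$ and $\frac{a_k}{A_{k+1}}=\frac{3}{k+3}$. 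This identity is the engine that cancels the first-order term below.

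For the base case $k=1$, note $\psi_1(x)=f(x^1)+\frac{N}{3}\|x-x_0\|^3$ is minimized at $x=x_0$ with value $f(x^1)$, and $A_1=1$, so \eqref{ca} holds with equality. For \eqref{cb} at $k=1$ the claim reduces to $f(x^1)\le f(x)+\frac{2H}{3}\|x-x_0\|^3$ for all $x$, which is precisely the first-iterate guarantee of the init step: using $M_0\le H$, either branch produces $x^1$ with $f(x^1)\le \min_y[f(y)+\frac{2H}{3}\|y-x^0\|^3]$ via \cref{lem:anycubic}, as already recorded in the \algname{CaCuN} discussion. The inductive step for \eqref{cb} is the easy half: assuming \eqref{cb} at $k$, the recursion adds $a_k[f(z^k)+\inp{\nabla f(z^k)}{x-z^k}]$ with $z^k\in\{y^k,x^{k+1}\}$, and \cref{as:convexity} bounds this linear model by $f(x)$, so $\psi_{k+1}(x)\le A_kf(x)+\frac{2H+N}{3}\|x-x_0\|^3+a_kf(x)=A_{k+1}f(x)+\frac{2H+N}{3}\|x-x_0\|^3$.

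The substantive part is the inductive step for \eqref{ca}. Assuming \eqref{ca} at $k$, I would first invoke uniform convexity of the cubic term: since $\ell_k$ is linear and $v^k$ minimizes $\psi_k$, one has $\psi_k(x)\ge\psi_k^\star+\frac{N}{c}\|x-v^k\|^3$ for an explicit constant $c$. Combining this with \eqref{ca} and the convexity bound $A_kf(x^k)\ge A_k[f(z^k)+\inp{\nabla f(z^k)}{x^k-z^k}]$ and minimizing the recursion over $x$ gives
\[
\psi_{k+1}^\star \ge \min_x\Bigl\{A_{k+1}f(z^k)+\inp{\nabla f(z^k)}{A_kx^k+a_kx-A_{k+1}z^k}+\tfrac{N}{c}\|x-v^k\|^3\Bigr\}.
\]
In the gradient branch ($z^k=y^k$), the coupling $A_{k+1}y^k=A_kx^k+a_kv^k$ makes the linear term vanish at $x=v^k$; substituting $x=v^k+u$ and minimizing $\inp{a_k\nabla f(y^k)}{u}+\frac{N}{c}\|u\|^3$ (whose optimum is $-\frac{2}{3\sqrt{3c}}\,a_k^{3/2}\|\nabla f(y^k)\|^{3/2}$) yields $\psi_{k+1}^\star\ge A_{k+1}f(y^k)-\frac{\kappa\,a_k^{3/2}}{\sqrt N}\|\nabla f(y^k)\|^{3/2}$. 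The acceptance test certifies the cubic decrease $f(x^{k+1})\le f(y^k)-\frac{1}{\sqrt{3M_k}}\|\nabla f(y^k)\|^{3/2}$, so \eqref{ca} at $k+1$ closes provided $\frac{A_{k+1}}{\sqrt{3M_k}}\ge \frac{\kappa\,a_k^{3/2}}{\sqrt N}$; since $M_k\le H$ and $\sup_k a_k^{3/2}/A_{k+1}<\infty$, this fixes $N=\Theta(H)$. The CRN branch ($z^k=x^{k+1}=T_{2H}(x^k)$) runs analogously, except the needed decrease is supplied by \cref{lem:nesterov6} with $M=2H$ together with monotonicity $f(x^{k+1})\le f(x^k)$.

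The main obstacle is this \eqref{ca} step, concentrated in two places: pinning down the uniform-convexity constant $c$ for $\|\cdot\|^3$ and the induced $\kappa$, and treating both branches under a single admissible $N$. The delicate branch is the CRN one, where the step is taken from $x^k$ rather than $y^k$, so the first-order cancellation is not automatic and must be routed through \cref{lem:nesterov6} and monotonicity; verifying that one choice $N=\Theta(H)$ validates both branches is what ultimately produces the constant $14\cdot 2H$ in the theorem.
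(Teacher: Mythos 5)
Your route is the same as the paper's: estimate sequences with $A_k=\frac{k(k+1)(k+2)}{6}$, the base case and \eqref{cb} handled via \cref{lem:anycubic} and convexity exactly as in the paper, and the gradient branch closed by the coupling identity $A_{k+1}y^k=A_kx^k+a_kv^k$, the acceptance test with $M_k\le H$, and minimization of $a_k\langle\nabla f(y^k),x-v^k\rangle+\tfrac{N}{6}\|x-v^k\|^3$ (the paper gets the uniform-convexity constant $c=6$ from Nesterov's Lemma~4 and the resulting condition $A_{k+1}/\sqrt{3H}\ge\tfrac23\sqrt{2/N}\,a_k^{3/2}$ from his Lemma~2; your $c$, $\kappa$ bookkeeping is the same computation). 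All of that part is sound.

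The genuine gap is the CRN branch. You take $x^{k+1}=T_{2H}(x^k)$ and claim the argument closes via \cref{lem:nesterov6} \emph{together with monotonicity} $f(x^{k+1})\le f(x^k)$. It does not. After the convexity step and the substitution $A_kx^k=A_{k+1}y^k-a_kv^k$, the term you must lower-bound by $\sim\|\nabla f(x^{k+1})\|^{3/2}$ is
\[
  A_{k+1}\langle\nabla f(x^{k+1}),\,y^k-x^{k+1}\rangle
  \;=\;
  A_k\langle\nabla f(x^{k+1}),\,x^k-x^{k+1}\rangle
  \;+\;
  a_k\langle\nabla f(x^{k+1}),\,v^k-x^{k+1}\rangle .
\]
If the cubic step is taken from $x^k$, \cref{lem:nesterov6} controls only the first summand; the second summand is sign-indefinite, since $v^k$ is the minimizer of $\psi_k$ and bears no relation to the descent geometry at $x^{k+1}$. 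Monotonicity of function values gives no handle on an inner product, and convexity only yields the upper bound $a_k\bigl(f(v^k)-f(x^{k+1})\bigr)$, which is the wrong direction; so the estimate-sequence recursion cannot be closed this way. The paper's proof avoids the issue entirely because there the CRN step is $x^{k+1}=T_{2H}(y^k)$, taken from the extrapolated point $y^k$: then \cref{lem:nesterov6} with $M=2H$ (so $1/\sqrt{H+M}=1/\sqrt{3H}$) applies to precisely the term $\langle\nabla f(x^{k+1}),y^k-x^{k+1}\rangle$ that appears, no monotonicity is needed, and the same condition $A_{k+1}/\sqrt{3H}\ge\tfrac23\sqrt{2/N}\,a_k^{3/2}$ as in the gradient branch comes out, giving $N=12H$. (The listing of \cref{alg:acccacun} writes the CRN update at $x^k$, but the appendix proof uses $T_{2H}(y^k)$, and only that version makes the lemma applicable; this mirrors Nesterov's accelerated cubic Newton.) To repair your proposal, replace the CRN branch step by $T_{2H}(y^k)$ and drop the monotonicity route.
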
 
\begin{proof}
    Let us ensure that relations hold for $k = 1$. 

    We choose
    \begin{equation}
        A_1 = 1, \quad \ell_1(x) \equiv f(x^1), x \in \R^d,
    \end{equation}
    so 
    \begin{equation}
        \psi_1(x) = f(x^1) + \frac{N}{3} \|x - x_0\|^3.
    \end{equation}
    Then $\psi_1^* = f(x^1)$, so~\cref{ca} holds for both cases below.

    \begin{leftbar}
        If $\quad f\rbr*{y^0 - \frac{\nabla f(y^0)}
            {\sqrt{M_0\norm{\nabla f(y^0)}}}} \leq f(y^0)-  \frac{\sqrt2}{3\sqrt{M_0}} \norm{\nabla f(y^0)}^{3/2}$for some $M_0 \leq H$    then we do the gradient step $x^{1} = y^0 - \frac{\nabla f(y^0)}{\sqrt{M_0\norm{\nabla f(y^0)}}}$, and have that
        \begin{multline*}
            f(x^{1}) \leq f(y^0) -  \frac{\sqrt2}{3\sqrt{M_0}} \norm{\nabla f(y^0)}^{3/2} \equiv \min_y \sbr*{f(y^0) + \inp{\nabla f(y^0)}{y-y^0} + \frac{2M_0}{3}\|y-y^0\|^3}
            \\
            \leq \min_y \sbr*{f(y) + \frac{2H}{3}\|y-y^0\|^3}.
        \end{multline*}
    
    \end{leftbar}

    \begin{leftbar}
        Else we perform the \algname{CRN} step $x^1 = T_{2H}(x^0)$, and by \cref{lem:anycubic} we have
        \begin{equation*}
            f(x^1)  \leq \min_y \sbr*{f(x) + \frac{2H}{3} \|x^1 - x^0\|^3}.
        \end{equation*}
    \end{leftbar}

    Thus in both cases
    \begin{multline}
        \psi_1(x) = f(x^1) + \frac{N}{3} \|x - x_0\|^3 \leq \min_{y} \left[ f(y) + \frac{2H}{3} \|y - x_0\|^3 \right] + \frac{N}{3} \|x - x_0\|^3
        \\
        \leq f(x) + \frac{2H+N}{3} \|x - x_0\|^3,
    \end{multline}
    and~\eqref{cb} follows.

    Assume now that relations~\eqref{ca},\eqref{cb} hold for some $k \geq 1$. 
    
    Denote
    \begin{equation*}
        \psi_k(x) \equiv \ell_k(x) + \frac{N}{3} \|x - x_0\|^3 
        \equiv \ell_k(x) + N d(x) 
    \end{equation*}
    and
    \begin{equation*}
        v_k = \arg\min_x \psi_k(x).
    \end{equation*}

    Then the optimality condition reads as
    \begin{equation*}
        \nabla \ell_k(v^k) + N \nabla d(v^k) = 0.
    \end{equation*}

    Next, applying \cite[Lemma~4]{nesterov2008accelerating} with $p = 3$, for any $x \in \R^d$, we have
    \[
        d(x) - d(v^k) - \langle \nabla d(v^k), x - v^k \rangle 
        \geq \frac{1}{6} \|x - v^k\|^3,
    \]
    and by convexity
    \begin{equation*}
        \ell_k(x) \geq \inp{\nabla \ell_k(v^k)}{x - v^
        k}.
    \end{equation*}

    The latter implies
    \begin{eqnarray*}
        \psi_k(x) &\geq& \inp{\nabla \ell_k(v^k)}{x - v^
        k} + Nd(v^k) + \inp{N\nabla d(v^k)}{x - v^k} + \frac{N}{6} \|x - v^k\|^3
        \\
        &=& \psi_k(v^k) + \frac{N}{6} \|x - v^k\|^3 \equiv  \psi_k^* + \frac{N}{6} \|x - v^k\|^3 
        \\
        &\geq& A_k f(x^k) + \frac{N}{6} \|x - v^k\|^3,
    \end{eqnarray*}
    where the last follows from~\eqref{ca} for $k$.

    Let us choose some $a_k > 0$. Define
    \begin{align}\label{eq:comb}
        \alpha_k &= \frac{a_k}{A_k + a_k}, \notag
        \\
        y_k &= (1 - \alpha_k) x_k + \alpha_k v_k.
    \end{align}

    \begin{leftbar}
        If $f\rbr*{y^k - \frac{\nabla f(y^k)}
            {\sqrt{M_k\norm{\nabla f(y^k)}}}} \leq f(y^k)-  \frac{1}{\sqrt{3M_k}} \norm{\nabla f(y^k)}^{3/2}$ with $M_k \le H$ then the gradient step is performed
        \begin{eqnarray*}
            x_{k+1} &=& y^k - \frac{\nabla f(y^k)}{\sqrt{M_k\norm{\nabla f(y^k)}}}
            \\
            \psi_{k+1}(x) &=& \psi_k(x) + a_k \big[ f(y_{k}) + \langle \nabla f(y_{k}), x - y_{k} \rangle \big].
        \end{eqnarray*}
        In view of~\eqref{cb} for $k$, for any $x \in \R^d$ we have
        \begin{align*}
            \psi_{k+1}(x) &= \psi_k(x) + a_k \big[ f(y_{k}) + \langle \nabla f(y_{k}), x - y_{k} \rangle \big]
            \\
            &\leq A_k f(x) + \frac{2H + N}{3} \|x - x_0\|^3
            + a_k \big[ f(y_{k}) + \langle \nabla f(y_{k}), x - y_{k} \rangle \big]
            \\
            &\leq (A_k + a_k) f(x) + \frac{2H + N}{3} \|x - x_0\|^3,
        \end{align*}
        where the last inequality follows from convexity and implies~\eqref{cb} for $k+1$.
        
        Next,
        \begin{eqnarray*}
            \lefteqn{\psi_{k+1}^* = \min_x \left\{ \psi_k(x) + a_k \left[ f(y^k) + \langle \nabla f(y^k), x - y^k \rangle \right] \right\}}
            \\
            &\geq& \min_x \left\{ A_k f(x^k) + \frac{N}{6} \|x - v^k\|^3
            + a_k \left[ f(y^k) + \langle \nabla f(y^k), x - y^k \rangle \right] \right\} 
            \\
            &\overset{\eqref{eq:convexity}}{\geq}& \min_x \left\{ A_k\rbr{ f(y^k) + \inp{\nabla f(y^k)}{x^k - y^k}} + \frac{N}{6} \|x - v^k\|^3
            + a_k \left[ f(y^k) + \langle \nabla f(y^k), x - y^k \rangle \right] \right\} 
            \\
            &=&\min_x \left\{ (A_k + a_k) f(y^{k}) 
            + A_k \langle \nabla f(y^{k}), x^k - y^{k} \rangle 
            + a_k \langle \nabla f(y^k), x - y^k \rangle
            + \frac{N}{6} \|x - v^k\|^3 \right\}
            \\
            &=& A_{k+1} f(y^{k}) 
            + \langle \nabla f(y^{k}), A_{k} (x^k - y^k) + a_k (v^k - y^{k}) \rangle 
            \\
            && + \min_x \left\{ a_k \langle \nabla f(y^k), x - v^k \rangle 
            + \frac{N}{6} \|x - v^k\|^3 \right\} 
            \\
            &\overset{\eqref{eq:comb}}{=}& \min_x \left\{ A_{k+1} f(x^{k+1})  +  \frac{A_{k+1}}{\sqrt{3H}} \norm{\nabla f(y^k)}^{3/2} 
            +  a_k \langle \nabla f(y^k), x - v^k \rangle 
            + \frac{N}{6} \|x - v^k\|^3 \right\}.
        \end{eqnarray*}
    
        Hence, our choice of parameters must ensure the following inequality:
        \[
            \frac{A_{k+1}}{\sqrt{3H}} \norm{\nabla f(y^k)}^{3/2} 
            +  a_k \langle \nabla f(y^k), x - v^k \rangle 
            + \frac{N}{6} \|x - v^k\|^3 \geq 0
        \]
         Using \cite[Lemma 2]{nesterov2008accelerating} with $p = 3$, $s = a_k \nabla f(y^{k})$, and 
        $\sigma = \frac12 N$, we come to the following condition:
        \begin{equation}
            \label{eq:akgrad}
            A_{k+1} \sqrt{\frac{1}{3H}} \geq \frac{2}{3} \sqrt{\frac{2}{N}}  a_k^{3/2}.
        \end{equation}
    \end{leftbar}

    \begin{leftbar}
        Else we perform \algname{CRN} step
        \begin{eqnarray*}
            x_{k+1} &=& T_{2H}(y_k)
            \\
            \psi_{k+1}(x) &=& \psi_k(x) + a_k \big[ f(x_{k+1}) + \langle \nabla f(x_{k+1}), x - x_{k+1} \rangle \big].
        \end{eqnarray*}
        In view of~\eqref{cb} for $k$, for any $x \in \R^d$ we have
        \begin{align*}
            \psi_{k+1}(x) &= \psi_k(x) + a_k \big[ f(x_{k+1}) + \langle \nabla f(x_{k+1}), x - x_{k+1} \rangle \big]
            \\
            &\leq A_k f(x) + \frac{2H + N}{3} \|x - x_0\|^3
            + a_k \big[ f(x_{k+1}) + \langle \nabla f(x_{k+1}), x - x_{k+1} \rangle \big] 
            \\
            &\leq (A_k + a_k) f(x) + \frac{2H + N}{3} \|x - x_0\|^3,
        \end{align*}
        we the last inequality follows from convexity and implies~\eqref{cb} for $k+1$.

        Next, 
        \begin{eqnarray*}
            \lefteqn{\psi_{k+1}^* = \min_x \left\{ \psi_k(x) + a_k \left[ f(x^{k+1}) + \langle \nabla f(x^{k+1}), x - x^{k+1} \rangle \right] \right\}}
            \\
            &\geq& \min_x \left\{ A_k f(x^k) + \frac{N}{6} \|x - v^k\|^3
            + a_k \left[ f(x^{k+1}) + \langle \nabla f(x^{k+1}), x - x^{k+1} \rangle \right] \right\} 
            \\
            &\geq& \min_x \left\{ (A_k + a_k) f(x^{k+1}) 
            + A_k \langle \nabla f(x^{k+1}), x^k - x^{k+1} \rangle 
            + a_k \langle \nabla f(x^{k+1}), x - x^{k+1} \rangle 
            + \frac{N}{6} \|x - v^k\|^3 \right\}
            \\
            &=& A_{k+1} f(x^{k+1}) 
            + \langle \nabla f(x^{k+1}), A_{k+1} y^k - a_k v^k - A_k x^{k+1} \rangle 
            \\
            && + \min_x \left\{ a_k \langle \nabla f(x^{k+1}), x - x^{k+1} \rangle 
            + \frac{N}{6} \|x - v^k\|^3 \right\} 
            \\
            &=& \min_x \left\{ A_{k+1} f(x^{k+1}) 
            + A_{k+1} \langle \nabla f(x^{k+1}), y^k - x^{k+1} \rangle  + a_k \langle \nabla f(x^{k+1}), x - v^k \rangle 
            + \frac{N}{6} \|x - v^k\|^3 \right\}.
        \end{eqnarray*}

        Further, by \cref{lem:nesterov6} we have
        \[
            \langle \nabla f(x^{k+1}), y^k - x^{k+1} \rangle 
            \geq \sqrt{\frac{1}{M+H}}  \|\nabla f(x^{k+1})\|^{3/2}.
        \]
        Hence, our choice of parameters must ensure the following inequality:
        \[
            A_{k+1} \sqrt{\frac{1}{3H}} \|\nabla f(x^{k+1})\|^{3/2}
            + a_k \langle \nabla f(x^{k+1}), x - v^k \rangle
            + \frac{N}{6} \|x - v^k\|^3 \geq 0, \quad \forall x \in \R^d.
        \]

        Using \cite[Lemma 2]{nesterov2008accelerating} with $p = 3$, $s = a_k \nabla f(x^{k+1})$, and 
        $\sigma = \frac12 N$, we come to the following condition:
        \begin{equation}\label{eq:akcubic}
               A_{k+1} \sqrt{\frac{1}{3H}} \geq \frac{2}{3} \sqrt{\frac{2}{N}}  a_k^{3/2}.
        \end{equation}
    \end{leftbar}

    In both cases one can satisfy the conditions by choosing
    \[
        A_k = \frac{k(k + 1)(k + 2)}{6},
    \]
    \[
        a_k = A_{k+1} - A_k 
        = \frac{(k + 1)(k + 2)(k + 3)}{6} - \frac{k(k + 1)(k + 2)}{6}
        = \frac{(k + 1)(k + 2)}{2}
        \]
    for $k \geq 1$.
    Since
        \[
        a_k^{-3/2} A_{k+1} 
        = 2^{3/2} \frac{(k + 1)(k + 2)(k + 3)}{6  [(k + 1)(k + 2)]^{3/2}}
        = 2^{1/2} \frac{k + 3}{3  [(k + 1)(k + 2)]^{1/2}} \geq \frac{\sqrt{2}}{3},
    \]
    inequalities~\eqref{eq:akgrad} and~\eqref{eq:akcubic} lead to the following condition:
    \[
        \frac{1}{\sqrt{3H}} \geq \frac{2}{\sqrt{N}}.
    \]
    
    Hence one can chose
    \begin{equation*}
        N=12H.
    \end{equation*}

\end{proof}

\begin{theorem}
    Let~\cref{as:convexity,as:bounded-level-set,as:hessian_smooth} hold. Then the iterates generated by~\cref{alg:acccacun} with  $M_0 \leq H$ satisfy the following
    \[
        f(x^k) - f(x^*) \leq \frac{14 \cdot 2H \|x^0 - x^*\|^3}{k(k + 1)(k + 2)}.
    \]
\end{theorem}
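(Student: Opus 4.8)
The plan is to combine the two invariants established in the preceding lemma --- the lower bound~\eqref{ca} and the upper bound~\eqref{cb} --- into a single estimate at the minimizer and then substitute the explicit parameter values. First I would invoke~\eqref{ca}, which gives $A_k f(x^k) \le \psi_k^\star = \min_x \psi_k(x)$. Since the minimum ranges over all of $\R^d$, in particular $\psi_k^\star \le \psi_k(x^\star)$ for any $x^\star \in \argmin f$. Next I would apply~\eqref{cb} at the point $x = x^\star$, obtaining $\psi_k(x^\star) \le A_k f(x^\star) + \tfrac{2H+N}{3}\|x^\star - x^0\|^3$.

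Chaining these three inequalities yields $A_k f(x^k) \le A_k f(x^\star) + \tfrac{2H+N}{3}\|x^0 - x^\star\|^3$; subtracting $A_k f(x^\star)$ and dividing by $A_k>0$ gives
\[
    f(x^k) - f(x^\star) \le \frac{2H+N}{3 A_k}\,\|x^0 - x^\star\|^3 .
\]
It then remains only to substitute the parameter choices fixed in the lemma, namely $N = 12H$ and $A_k = \tfrac{1}{6}k(k+1)(k+2)$. With these, $\tfrac{2H+N}{3} = \tfrac{14H}{3}$ and $\tfrac{1}{A_k} = \tfrac{6}{k(k+1)(k+2)}$, so the right-hand side collapses to $\tfrac{14\cdot 2H\,\|x^0 - x^\star\|^3}{k(k+1)(k+2)}$, which is exactly the claimed bound.

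The final deduction is essentially mechanical: the whole difficulty has already been absorbed into the preceding lemma, whose induction verifies~\eqref{ca}--\eqref{cb} simultaneously across both branches of the algorithm (the accepted gradient step and the fallback \algname{CRN} step) while ensuring the scaling recursion $a_k^{-3/2} A_{k+1} \ge \tfrac{\sqrt{2}}{3}$, equivalently $N \ge 12H$, holds uniformly in $k$. Accordingly, the genuine obstacle lies not in this theorem but in confirming that the explicit sequences $A_k = \tfrac{1}{6}k(k+1)(k+2)$ and $a_k = \tfrac{1}{2}(k+1)(k+2)$ satisfy both~\eqref{eq:akgrad} and~\eqref{eq:akcubic} at once; once that compatibility is granted (together with the initialization $M_0 \le H$ used to seed the base case), the $\mathcal O(k^{-3})$ rate follows from the three-line chain above.
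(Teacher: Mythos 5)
Your proposal is correct and follows essentially the same route as the paper: the paper's proof is exactly the chain $A_k f(x^k) \overset{\eqref{ca}}{\leq} \psi_k^* \overset{\eqref{cb}}{\leq} A_k f(x^*) + \tfrac{2H+N}{3}\|x^0-x^*\|^3$, followed by substituting $N=12H$ and $A_k=\tfrac{1}{6}k(k+1)(k+2)$ from the preceding lemma. You merely spell out the intermediate step $\psi_k^* \le \psi_k(x^\star)$ and the arithmetic $\tfrac{2H+N}{3A_k} = \tfrac{14H}{3}\cdot\tfrac{6}{k(k+1)(k+2)} = \tfrac{14\cdot 2H}{k(k+1)(k+2)}$, which the paper leaves implicit.
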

\begin{proof}
    Indeed, we have shown that
    \[
        A_k f(x^k) \overset{\eqref{ca}}{\leq} \psi_k^* \overset{\eqref{cb}}{\leq} A_k f(x^*) 
        + \frac{2H + N}{3} \|x^0 - x^*\|^3.
    \]
\end{proof}

\begin{remark}
    Note that the point $v^k$ can be found in (4.8) by an explicit formula.  
    Consider
    \[
        s^k = \nabla \ell_k(x).
    \]
    This vector does not depend on $x$ since the function $\ell_k(x)$ is linear.  
    Then
    \[
        v^k = x^0 - \frac{s^k}{\sqrt{N \|s^k\|}}.
    \]
\end{remark}

\newpage

\section{Missing Proofs for First Order Results}
\subsection{Examples}

\begin{example}\label{prop:iso-cubic}
Let $f(x)=\frac{H}{3}\|x\|^3$ with $H>0$. Then $\nabla^2 f$ is $2H$-Lipschitz and \eqref{eq:accclass} holds with $C=4H$.
\end{example}
\begin{proof}
Write $r=\|x\|$ and $e=x/r$ for $x\neq 0$. Then
\[
\nabla f(x)=H r x,\qquad
\nabla^2 f(x)=H\Big(r I+\frac{xx^\top}{r}\Big)=H r (I+ee^\top).
\]
(At $x=0$, set $\nabla^2 f(0)=0$, which is the continuous extension since $\|xx^\top/r\|=r\to0$.)

\emph{Verification of \eqref{eq:accclass}.}
Using the formulas above,
\[
\nabla^2 f(x) \nabla f(x)=H\Big(r I+\frac{xx^\top}{r}\Big)(H r x)
=2H^2 r^2 x,
\]
hence
\[
\frac{\langle \nabla f(x), \nabla^2 f(x) \nabla f(x)\rangle}{\|\nabla f(x)\|^2}
=\frac{(H r x)^\top(2H^2 r^2 x)}{\|H r x\|^2}
=2H r.
\]
Since $\|\nabla f(x)\|=H r^2$, we get
\[
2H r  =  2\sqrt{H} \sqrt{\|\nabla f(x)\|}
 =  \sqrt{(4H) \|\nabla f(x)\|},
\]
so \eqref{eq:accclass} holds with $C=4H$ (with equality for $x\neq 0$).

\textbf{Lipschitzness of the Hessian.} Fix a unit vector $u$ and define
\[
\phi(t):=u^\top\nabla^2 f\big(y+t(x-y)\big)u,\quad t\in[0,1].
\]
Let $z(t)=y+t(x-y)$, $r=\|z\|$, $\alpha=\langle u,z\rangle$, and $c=\alpha/r$ (define by continuity at $r=0$). Then
\[
\phi(t)=H\left(r+\frac{\alpha^2}{r}\right),\quad
\phi'(t)=H\Big(r'(1-c^2)+2c \alpha'\Big),
\]
with $r'=\langle z/r, x-y\rangle$ and $\alpha'=\langle u, x-y\rangle$. Thus $|r'|\le\|x-y\|$, $|\alpha'|\le\|x-y\|$, and $|c|\le 1$, giving
\[
|\phi'(t)|\le H\big((1-c^2)+2|c|\big) \|x-y\|\le 2H \|x-y\|.
\]
By the mean value theorem,
\[
|u^\top(\nabla^2 f(x)-\nabla^2 f(y))u|
=|\phi(1)-\phi(0)|
\le \int_0^1 |\phi'(t)| dt
\le 2H \|x-y\|.
\]
Taking the supremum over unit $u$ yields
$\|\nabla^2 f(x)-\nabla^2 f(y)\|\le 2H \|x-y\|$.
\end{proof}

\begin{example}\label{prop:sep-cubic}
Let $f(x)=\sum_{i=1}^d \frac{H_i}{3}|x_i|^3$ with $H_i>0$ and $H_{\max}=\max_iH_i$.
Then $\nabla^2 f$ is $2H_{\max}$-Lipschitz and \eqref{eq:accclass} holds with $C=4H_{\max}$.
\end{example}
\begin{proof}
\emph{Formulas.} $\displaystyle \nabla f(x)=(H_i x_i|x_i|)_i,\qquad
\nabla^2 f(x)=\mathrm{diag}(2H_i|x_i|).$

\emph{Inequality \eqref{eq:accclass}.} Since $\nabla^2 f(x)$ is diagonal with entries $2H_i|x_i|$,
\[
\frac{\langle \nabla f(x), \nabla^2 f(x) \nabla f(x)\rangle}{\|\nabla f(x)\|^2}
=\frac{\sum_i 2H_i|x_i| (H_i x_i|x_i|)^2}{\sum_i (H_i x_i|x_i|)^2}
\le \max_i 2H_i|x_i|.
\]
Let $\alpha_i:=H_i x_i^2\ge 0$. Then $\|\nabla f(x)\|=(\sum_i \alpha_i^2)^{1/2}$ and
\[
\max_i 2H_i|x_i|
=2\max_i \sqrt{H_i} \sqrt{\alpha_i}
\le 2\sqrt{H_{\max}} \max_i \sqrt{\alpha_i}
\le 2\sqrt{H_{\max}} (\sum_i \alpha_i^2)^{1/4}
=2\sqrt{H_{\max}} \|\nabla f(x)\|^{1/2}.
\]
Thus \eqref{eq:accclass} holds with $C=4H_{\max}$.

\textbf{Lipschitzness of the Hessian.} For any $x,y$,
\[
\|\nabla^2 f(x)-\nabla^2 f(y)\|
=\Big\|\mathrm{diag}\big(2H_i(|x_i|-|y_i|)\big)\Big\|
\le \max_i 2H_i |x_i-y_i|
\le 2H_{\max} \|x-y\|.
\]
\end{proof}

\begin{example}[Logistic function]\label{ex:logistic}
Let $a\in\mathbb{R}^d\setminus\{0\}$ and $f(x)=\log\big(1+e^{-a^\top x}\big)$. Then $\nabla^2 f$ is $\frac{\|a\|^3}{6\sqrt{3}}$-Lipschitz and \eqref{eq:accclass} holds with $C=\frac{4}{27}\|a\|^3$.
\end{example}

\begin{proof}
Set $s=a^\top x$ and $\sigma(t)=\frac{1}{1+e^{-t}}$. Using $\sigma(s)+\sigma(-s)=1$,
\[
    \nabla f(x)
    =\frac{-e^{-s}}{1+e^{-s}} a
    =-\sigma(-s) a,
\]
and
\[
    \nabla^2 f(x)=\sigma(s)\sigma(-s) aa^\top.
\]

\emph{Inequality \eqref{eq:accclass}.}
We have
\[
\frac{\langle \nabla f,\nabla^2 f \nabla f\rangle}{\|\nabla f\|^2}
=\sigma(s)\sigma(-s) \|a\|^2,\qquad
\|\nabla f(x)\|=\sigma(-s) \|a\|.
\]
Thus \eqref{eq:accclass} is equivalent to
\[
\sigma(s)\sigma(-s) \|a\|^2 \le \sqrt{C \sigma(-s) \|a\|}
 \quad \text{or}\quad
C \ge \sigma(s)^2\sigma(-s) \|a\|^3.
\]
Let $p=\sigma(s)\in(0,1)$. Maximizing $p^2(1-p)$ on $[0,1]$ gives $\max_p p^2(1-p) = 4/27$ at $p=2/3$, hence $C=\frac{4}{27}\|a\|^3$.

\textbf{Lipschitzness of the Hessian.}  With $\phi(s)=\sigma(s)\sigma(-s)$ one obtains
\[
    \|\nabla^2 f(x)-\nabla^2 f(y)\|
    =\|a\|^2 |\phi(a^\top x)-\phi(a^\top y)|
    \le \|a\|^2 \big(\sup_s|\phi'(s)|\big) |a^\top(x-y)|
    \le \frac{\|a\|^3}{6\sqrt{3}} \|x-y\|,
\]
since $\phi'(s)=\sigma(s)\sigma(-s) (1-2\sigma(s))$ and
$\max_{p\in[0,1]} p(1-p)|1-2p|=1/(6\sqrt{3})$, while $|a^\top(x-y)|\le \|a\| \|x-y\|$.
\end{proof}

\subsection{Missing Proofs for CaCuAdGD}

\subsection{Lipschitz case}

\begin{lemma}\label{lem:two-regime-ap}
    Let~\cref{as:convexity,as:bounded-level-set,as:Lsmooth,as:hessian_smooth} hold. Then the iterates generated by~\cref{alg:cacuadgd} with $\alpha=0.5$ satisfy the following
    
    \begin{eqnarray}
        &\text{ (i) }& \bigl(f(x^k)-f_\star\bigr)^{1/2}\ge \frac{3}{2}\frac{LD^2}{\sqrt{HD^3}}, \quad \text{then}\quad  f(x^{k+1}) - f_\star  \le  f(x^k) - f_\star - \frac{1}{3\sqrt{HD^3}}\bigl(f(x^k)-f_\star\bigr)^{3/2} \label{eq first stage ap}
        \\
        &\text{(ii) }& \bigl(f(x^k)-f_\star\bigr)^{1/2}\le \frac{3}{2}\frac{LD^2}{\sqrt{HD^3}}, \quad \text{then}\quad  f(x^{k+1}) - f_\star  \le  f(x^k) - f_\star - \frac{2}{9LD^2}\bigl(f(x^k)-f_\star\bigr)^{2}.  \label{eq second stage ap}
    \end{eqnarray}
\end{lemma}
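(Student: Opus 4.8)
The plan is to combine two ingredients: a convexity-based lower bound on the gradient norm, and the unified per-step decrease produced by the descent analysis just preceding the lemma. Write $\delta_k := f(x^k)-f_\star$ and $g_k := \|\nabla f(x^k)\|$ (the statement is trivial when $g_k=0$, so assume $g_k>0$). First I would record that \algname{CaCuAdGD} is monotone, so every iterate lies in $\mathcal L_0$ and hence $\|x^k-x^\star\|\le D$ by \cref{as:bounded-level-set}; convexity \eqref{eq:convexity} then gives $\delta_k \le \langle \nabla f(x^k), x^k-x^\star\rangle \le D\,g_k$, i.e. $g_k \ge \delta_k/D$.

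Next I would package the descent analysis into a single regime-free inequality. Whichever branch the algorithm selects, the accepted step obeys either the cubic bound $-\tfrac{1}{3\sqrt{H_k}}g_k^{3/2}$ with $H_k\le H$, or the quadratic bound $-\tfrac{2}{9L_k}g_k^{2}$ with $L_k\le L$; replacing $H_k\le H$ and $L_k\le L$ in the respective cases yields
\[
  f(x^{k+1}) \le f(x^k) - \min\Bigl\{ \tfrac{1}{3\sqrt{H}}\,g_k^{3/2},\ \tfrac{2}{9L}\,g_k^{2} \Bigr\}.
\]
A one-line comparison shows the cubic term is the minimizer exactly when $g_k \ge \tfrac{9L^2}{4H}$ and the quadratic term otherwise.

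For part (i), the hypothesis $(f(x^k)-f_\star)^{1/2}\ge \tfrac32\tfrac{LD^2}{\sqrt{HD^3}}$ squares to $\delta_k \ge \tfrac{9L^2 D}{4H}$, whence $g_k \ge \delta_k/D \ge \tfrac{9L^2}{4H}$ and the minimum above is the cubic term. Substituting $g_k \ge \delta_k/D$ into $\tfrac{1}{3\sqrt{H}}g_k^{3/2}$ produces the decrease $\tfrac{1}{3\sqrt{HD^3}}\delta_k^{3/2}$ claimed in \eqref{eq first stage ap}; this direction is essentially immediate.

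The only genuine obstacle I anticipate is part (ii): under $(f(x^k)-f_\star)^{1/2}\le\tfrac32\tfrac{LD^2}{\sqrt{HD^3}}$ the algorithm may still sit in the cubic regime (large $g_k$ despite small $\delta_k$), so the plain quadratic bound is not automatically available. I would split on the minimizer. If the quadratic term is the minimum, $g_k\ge\delta_k/D$ gives directly $\tfrac{2}{9L}g_k^{2}\ge\tfrac{2}{9LD^2}\delta_k^{2}$. If instead the cubic term is the minimum, I would reduce it to $\tfrac{1}{3\sqrt{HD^3}}\delta_k^{3/2}$ via $g_k\ge\delta_k/D$ and then invoke the elementary inequality $\tfrac{1}{3\sqrt{HD^3}}\delta_k^{3/2}\ge\tfrac{2}{9LD^2}\delta_k^{2}$, which rearranges to exactly $\delta_k^{1/2}\le\tfrac32\tfrac{LD^2}{\sqrt{HD^3}}$, the hypothesis of part (ii). The crux is that the threshold $\tfrac32\tfrac{LD^2}{\sqrt{HD^3}}$ is precisely the crossover value at which the two $\delta_k$-bounds $\tfrac{1}{3\sqrt{HD^3}}\delta_k^{3/2}$ and $\tfrac{2}{9LD^2}\delta_k^{2}$ coincide, so below it the cubic bound dominates the quadratic one and \eqref{eq second stage ap} follows in both sub-cases.
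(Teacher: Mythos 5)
Your proof is correct and follows essentially the same route as the paper's: both combine the two-branch decrease from the descent analysis (with $H_k\le H$, $L_k\le L$) with the convexity bound $\|\nabla f(x^k)\|\ge (f(x^k)-f_\star)/D$, and then exploit the fact that the threshold $\tfrac{3}{2}\,LD^2/\sqrt{HD^3}$ is exactly the crossover at which the cubic and quadratic optimality-gap decrease bounds coincide, so that whichever branch the algorithm takes, the case-appropriate bound holds. Your only cosmetic deviation is packaging the two branches as a minimum in gradient-norm space and locating the crossover there ($g_k\ge 9L^2/(4H)$) for part (i), whereas the paper performs the comparison directly between the two optimality-gap bounds.
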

\begin{proof}
By inequalities~\eqref{eq:condition} and~\eqref{eq:certificate} either
\[
f(x^{k+1}) \le f(x^k) - \frac{1}{3\sqrt{H_k}}\|\nabla f(x^k)\|^{3/2}\quad(H_k\le H),
\]
or
\[
f(x^{k+1}) \le f(x^k) - \frac{2}{9L_k}\|\nabla f(x^k)\|^{2}\quad(L_k\le L).
\]

By convexity,
\[
f(x^k)-f_\star \le \langle \nabla f(x^k),x^k-x^\star\rangle \le \|\nabla f(x^k)\| \|x^k-x^\star\|\le D\|\nabla f(x^k)\|.
\]
Hence $\|\nabla f(x^k)\|\ge \bigl(f(x^k)-f_\star\bigr)/D$. Using $H_k\le H$ and $L_k\le L$,
\[
f(x^{k+1}) \le f(x^k) - \frac{1}{3\sqrt{H}}\Bigl(\frac{f(x^k)-f_\star}{D}\Bigr)^{3/2}
= f(x^k) - \frac{1}{3\sqrt{HD^3}}\bigl(f(x^k)-f_\star\bigr)^{3/2},
\]
\[
f(x^{k+1}) \le f(x^k) - \frac{2}{9L}\Bigl(\frac{f(x^k)-f_\star}{D}\Bigr)^{2}
= f(x^k) - \frac{2}{9LD^2}\bigl(f(x^k)-f_\star\bigr)^{2}.
\]
The threshold $\bigl(f(x^k)-f_\star\bigr)^{1/2}= \frac{3}{2}\frac{LD^2}{\sqrt{HD^3}}$ is the unique solution of
\[
\frac{2}{9LD^2}\bigl(f(x^k)-f_\star\bigr)^2
= \frac{1}{3\sqrt{HD^3}}\bigl(f(x^k)-f_\star\bigr)^{3/2},
\]
which determines which of the two decreases is smaller; the stated cases follow, as does the definition of $K_1$.

\textbf{Detailed derivation.}
If $(f(x^k) - f_\star)^{1/2} \geq \frac32 \frac{LD^2}{\sqrt{HD^3}}$ then
\begin{multline*}
    f(x^{k+1}) \leq f(x^k) -  \frac{2}{9L_k} \norm{\nabla f(x^k)}^{2} \leq f(x^k) -  \frac{2}{9L} \norm{\nabla f(x^k)}^{2} \leq f(x^k) -  \frac{2}{9LD^2} \rbr*{f(x^k) - f_\star}^{2} 
    \\
    \leq f(x^k) -  \frac{1}{3\sqrt{HD^3}} \rbr*{f(x^k) - f_\star}^{3/2}
\end{multline*}
coincides with
\begin{equation*}
    f(x^{k+1}) \leq f(x^k) -  \frac{1}{3\sqrt{H_k}} \norm{\nabla f(x^k)}^{3/2} \leq f(x^k) -  \frac{1}{3\sqrt{H}} \norm{\nabla f(x^k)}^{3/2} \leq f(x^k) -  \frac{1}{3\sqrt{HD^3}} \rbr*{f(x^k) - f_\star}^{3/2}.
\end{equation*}
This implies that
\begin{equation*}
    f(x^{k+1}) \leq f(x^k) -  \frac{1}{3\sqrt{HD^3}} \rbr*{f(x^k) - f_\star}^{3/2}
\end{equation*}
for any value of $M_k$.

Similarly if $(f(x^k) - f_\star)^{1/2} \leq \frac32 \frac{LD^2}{\sqrt{HD^3}}$ then
\begin{equation*}
    f(x^{k+1}) \leq f(x^k) -  \frac{2}{9L_k} \norm{\nabla f(x^k)}^{2} \leq f(x^k) -  \frac{2}{9L} \norm{\nabla f(x^k)}^{2} \leq f(x^k) -  \frac{2}{9LD^2} \rbr*{f(x^k) - f_\star}^{2} 
\end{equation*}
and 
\begin{multline*}
    f(x^{k+1}) \leq f(x^k) -  \frac{1}{3\sqrt{H_k}} \norm{\nabla f(x^k)}^{3/2} \leq f(x^k) -  \frac{1}{3\sqrt{H}} \norm{\nabla f(x^k)}^{3/2} \leq f(x^k) -  \frac{1}{3\sqrt{HD^3}} \rbr*{f(x^k) - f_\star}^{3/2}
    \\
    \leq f(x^k) -  \frac{2}{9LD^2} \rbr*{f(x^k) - f_\star}^{2} .
\end{multline*}
Thus for any $M_k$
\begin{equation*}\label{eq:non acc regime}
    f(x^{k+1})
    \leq f(x^k) -  \frac{2}{9LD^2} \rbr*{f(x^k) - f_\star}^{2} .
\end{equation*}
\end{proof}

\begin{corollary}
     Let~\cref{as:convexity,as:bounded-level-set,as:Lsmooth,as:hessian_smooth} hold. Then the iterates generated by~\cref{alg:cacuadgd} with $\alpha=0.5$ satisfy $\forall k:\bigl(f(x^k)-f_\star\bigr)^{1/2} \ge \frac{3}{2}\frac{LD^2}{\sqrt{HD^3}}$ the following
    \begin{equation*}
        f(x^{k+1}) - f_\star \leq \frac{\rbr*{9\sqrt{HD^3}+3\sqrt{f(x^0) - f_\star}}^2}{k^2} .
    \end{equation*}
\end{corollary}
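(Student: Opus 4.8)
The plan is to reduce the statement to a scalar recursion for the optimality gap $\delta_k \coloneqq f(x^k)-f_\star$ and then run the standard ``inverse–square–root'' telescoping argument. By the first–stage bound of \cref{lem:two-regime-ap} (inequality \eqref{eq first stage ap}), every index $k$ in the stated range satisfies
\[
  \delta_{k+1} \le \delta_k - \frac{1}{3\sqrt{HD^3}}\,\delta_k^{3/2}.
\]
The first thing I would record is that this range is an initial segment $\{0,1,\dots,K_1\}$: the inequality already shows that $\{\delta_k\}$ is nonincreasing, so the threshold $\delta_k^{1/2}\ge \tfrac{3}{2}LD^2/\sqrt{HD^3}$ can hold at an index $k$ only if it holds at all smaller indices. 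Hence the one–step decrease is available at every $j<k$, which is exactly what the telescoping below needs.

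Next I would pass to reciprocals of square roots. Writing $c\coloneqq 1/(3\sqrt{HD^3})$ and $t_k\coloneqq c\,\delta_k^{1/2}$, the bound reads $\delta_{k+1}\le \delta_k(1-t_k)$. The key elementary observation is that $\delta_{k+1}\ge 0$ forces $t_k\le 1$ (otherwise the recursion would drive the gap negative), so $(1-t_k)^{-1/2}$ is well defined; applying $(1-t)^{-1/2}\ge 1+t/2$ on $[0,1)$ gives
\[
  \delta_{k+1}^{-1/2}\ge \delta_k^{-1/2}(1-t_k)^{-1/2}\ge \delta_k^{-1/2}+\frac{c}{2}.
\]
Summing over $j=0,\dots,k-1$ telescopes to $\delta_k^{-1/2}\ge \delta_0^{-1/2}+\tfrac{c}{2}k=\delta_0^{-1/2}+\tfrac{k}{6\sqrt{HD^3}}$.

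Finally I would convert this lower bound into the advertised constant. Setting $A\coloneqq\sqrt{HD^3}$ and $B\coloneqq\sqrt{\delta_0}$, the estimate is $\delta_k^{-1/2}\ge B^{-1}+k/(6A)=(6A+kB)/(6AB)$, and a one–line algebraic check, namely $(6A+kB)(9A+3B)\ge 6ABk$ — which holds since $(6A+kB)(9A+3B)-6ABk=54A^2+18AB+3ABk+3B^2k\ge 0$ — yields $\delta_k^{-1/2}\ge k/(9A+3B)$. Inverting gives $\delta_k\le (9A+3B)^2/k^2$, and since $\{\delta_k\}$ is nonincreasing we conclude $\delta_{k+1}\le\delta_k\le \bigl(9\sqrt{HD^3}+3\sqrt{f(x^0)-f_\star}\bigr)^2/k^2$, which is the claim. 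The only genuinely delicate point is justifying $t_k\le 1$ so that the reciprocal step is legitimate; everything else is the routine cubic–Newton rate computation, in the spirit of \cite[Lemma A.1]{nesterov2019inexact}.
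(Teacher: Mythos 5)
Your proposal is correct, and it follows the same overall reduction as the paper: restrict to the first-stage regime, invoke inequality \eqref{eq first stage ap} of \cref{lem:two-regime-ap}, and convert the scalar recursion $\delta_{k+1}\le \delta_k - \tfrac{1}{3\sqrt{HD^3}}\,\delta_k^{3/2}$ into an $O(1/k^2)$ bound. The only divergence is in how that conversion is carried out: the paper applies \cite[Lemma~A.1]{nesterov2019inexact} as a black box (with $\xi_k = (f(x^k)-f_\star)/(9HD^3)$ and $\beta = 1/2$), whereas you re-derive the sequence estimate from scratch by telescoping $\delta_k^{-1/2}$ via the elementary bound $(1-t)^{-1/2}\ge 1+t/2$; the resulting constant, $\bigl(9\sqrt{HD^3}+3\sqrt{f(x^0)-f_\star}\bigr)^2$, is identical to the paper's. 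Your version is self-contained and makes explicit two points the paper leaves implicit: (i) that the set of first-stage indices is a prefix, so the one-step decrease is available at every $j<k$ when telescoping --- though note your justification attributes monotonicity of $f(x^k)$ to the first-stage inequality alone, whereas strictly one needs monotonicity across both regimes, which case (ii) of \cref{lem:two-regime-ap} (or the algorithm's built-in descent certificate) supplies; and (ii) that $t_k\le 1$, so the reciprocal step is legitimate (with the trivial edge case $\delta_{k+1}=0$ handled separately). Neither point is a genuine gap in the paper's shorter citation-based argument, but spelling them out is a modest gain in rigor at the cost of length.
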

\begin{proof}
    In \cite[Lemma A.1]{nesterov2019inexact} it is shown that if a non–negative sequence $\{\xi_t\}$ satisfies for $\beta > 0$
    \begin{equation*}
        \xi_k-\xi_{k+1}\geq\xi_{k+1}^{1+\beta}
    \end{equation*}
    then for all $k\geq 0$
    \begin{equation}
        \label{eq:nesterov_sequence_trick_1}\xi_k\leq\Bigl[\bigl(1+\tfrac1\beta\bigr)\bigl(1+\xi_0^{\beta}\bigr)\tfrac1k\Bigr]^{1/\beta}.
    \end{equation}

    With $\xi_k = \frac{f(x^k) - f_\star}{9HD^3}$, $\beta = 0.5$ Inequality~\eqref{eq first stage ap} implies
    \begin{equation*}
        f(x^{k+1}) - f_\star \leq \frac{81HD^3}{k^2} \rbr*{1+\rbr*{\frac{f(x^0) - f_\star}{9HD^3}}^{1/2}}^2.
    \end{equation*}
\end{proof}

\begin{theorem}\label{thm lsmoothglobal ap}
    Let~\cref{as:convexity,as:bounded-level-set,as:Lsmooth,as:hessian_smooth} hold. Then the iterates generated by~\cref{alg:cacuadgd} with $\alpha=0.5$ satisfy for all $k\ge0$ the following
    \begin{equation}\label{eq:global-noab-beta}
      f_k - f_*
       \le 
      \frac{9 LD^2}{ k  +  LD^2 \Phi(\xi_0)  -  \dfrac{2HD}{L} },
    \end{equation}
    where $\Phi(\xi) \coloneqq  \frac{9}{2} \frac{1}{\xi}
       + 
      \frac{6\sqrt{H}}{L\sqrt{D}} \frac{1}{\sqrt{\xi}}$.
    
    Or when the algorithm reaches non-accelerated regime ($(f(x^k) - f_\star)^{1/2} \leq \frac32 \frac{LD^2}{\sqrt{HD^3}}$)
    \begin{equation*}
        \xi_k
       \le 
      \frac{27}{2} \frac{LD^2}{ k+LD^2 \Phi(\xi_0) }.
    \end{equation*}
    
\end{theorem}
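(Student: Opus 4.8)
The plan is to introduce the single potential
\[
\phi_k := LD^2\,\Phi(\xi_k) = \frac{9LD^2}{2\xi_k} + \frac{6\sqrt{H}\,D^{3/2}}{\sqrt{\xi_k}}, \qquad \xi_k := f(x^k)-f_\star ,
\]
and to show that it grows by at least one per iteration in \emph{both} regimes of \cref{lem:two-regime-ap}. Writing $b := \tfrac{1}{3\sqrt{HD^3}}$ and $c := \tfrac{2}{9LD^2}$, the lemma gives $\xi_{k+1}\le \xi_k - b\,\xi_k^{3/2}$ whenever $\xi_k^{1/2}\ge \tfrac32\,LD^2/\sqrt{HD^3}$ (cubic regime) and $\xi_{k+1}\le \xi_k - c\,\xi_k^{2}$ otherwise (quadratic regime); in particular $\xi_{k+1}\le\xi_k$ always, so both $1/\xi_k$ and $1/\sqrt{\xi_k}$ are nondecreasing. (If some $\xi_k=0$ the claimed bound is trivial, so I assume $\xi_k>0$ throughout.)

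First I would record two elementary one-step estimates. From the quadratic recursion, dividing $\xi_k-\xi_{k+1}\ge c\,\xi_k^2$ by $\xi_k\xi_{k+1}$ and using $\xi_{k+1}\le\xi_k$ yields $\tfrac{1}{\xi_{k+1}}-\tfrac{1}{\xi_k}\ge c$. From the cubic recursion, the identity $\sqrt{\xi_k}-\sqrt{\xi_{k+1}}=\tfrac{\xi_k-\xi_{k+1}}{\sqrt{\xi_k}+\sqrt{\xi_{k+1}}}\ge \tfrac{b}{2}\xi_k$ (again using $\xi_{k+1}\le\xi_k$) gives, after dividing by $\sqrt{\xi_k\xi_{k+1}}$, the bound $\tfrac{1}{\sqrt{\xi_{k+1}}}-\tfrac{1}{\sqrt{\xi_k}}\ge \tfrac{b}{2}$. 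Now the two summands of $\phi_k$ are calibrated to these two increments: in the quadratic regime the first summand alone advances by $\tfrac{9LD^2}{2}\,c=1$ while the second is nondecreasing, and in the cubic regime the second summand advances by $6\sqrt{H}D^{3/2}\cdot\tfrac{b}{2}=1$ while the first is nondecreasing. Hence $\phi_{k+1}-\phi_k\ge 1$ in either case, and telescoping gives $\phi_k\ge \phi_0+k = LD^2\Phi(\xi_0)+k$ for all $k$, with no need to track the (possibly alternating) regime transitions.

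It then remains to convert this lower bound on $\phi_k$ into the stated upper bound on $\xi_k$. Writing $w:=1/\sqrt{\xi_k}$, completing the square (equivalently Young's inequality) gives $6\sqrt{H}D^{3/2}\,w\le \tfrac{9LD^2}{2}w^2+\tfrac{2HD}{L}$, since the discriminant condition $36HD^3\le 18LD^2\cdot\tfrac{2HD}{L}$ holds with equality. Therefore $\phi_k=\tfrac{9LD^2}{2}w^2+6\sqrt{H}D^{3/2}w\le 9LD^2 w^2+\tfrac{2HD}{L}=\tfrac{9LD^2}{\xi_k}+\tfrac{2HD}{L}$, and combining with $\phi_k\ge LD^2\Phi(\xi_0)+k$ yields $\tfrac{9LD^2}{\xi_k}\ge k+LD^2\Phi(\xi_0)-\tfrac{2HD}{L}$, which is exactly \eqref{eq:global-noab-beta}. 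For the sharper form in the non-accelerated regime, I would additionally use the threshold $\xi_k\le \tfrac{9L^2D}{4H}$, which forces $\tfrac{2HD}{L}\le\tfrac12\cdot\tfrac{9LD^2}{\xi_k}$; absorbing the offset into the main term turns $\tfrac{9LD^2}{\xi_k}+\tfrac{2HD}{L}\ge\phi_k$ into $\tfrac{27LD^2}{2\xi_k}\ge k+LD^2\Phi(\xi_0)$, i.e.\ the $\tfrac{27}{2}$-bound.

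The main obstacle is purely the design of $\phi_k$: one must recognize that a $1/\xi$ term and a $1/\sqrt{\xi}$ term, with the precise coefficients $\tfrac{9LD^2}{2}$ and $6\sqrt{H}D^{3/2}$, are exactly the potentials whose telescoped increments are $\ge 1$ under the quadratic and cubic recursions respectively, so that a single monotone potential simultaneously certifies both phases without any bookkeeping of when the iterates cross the threshold. Once this is in hand, the completing-the-square step that produces the $-\tfrac{2HD}{L}$ offset (and its absorption in the non-accelerated regime) is routine algebra; verifying the two elementary increment inequalities and the monotonicity $\xi_{k+1}\le\xi_k$ are the only remaining checks.
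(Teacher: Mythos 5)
Your proposal is correct and follows essentially the same route as the paper's proof: the same potential $\Phi$ (you merely rescale it by $LD^2$ so increments are $\ge 1$ rather than $\ge 1/(LD^2)$), the same elementary one-step increment inequalities for $1/\xi$ and $1/\sqrt{\xi}$, the same telescoping across both regimes, and the same completing-the-square step (the paper's Young's inequality with $\beta = 9/2$) producing the $-\tfrac{2HD}{L}$ offset, with the $\tfrac{27}{2}$-bound in the non-accelerated regime obtained by absorbing that offset exactly as the paper does via the threshold $\xi_k \le \tfrac{9L^2D}{4H}$.
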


\begin{proof}
Let $\xi_k \coloneqq f(x^k)-f_\star$. The inequalities \eqref{eq first stage ap} and \eqref{eq second stage ap} imply
\begin{equation*}
  \xi_k-\xi_{k+1}
   > 
  0,
\end{equation*}
or $x^k$ is a solution.

For any $u>v>0$,
\[
  \frac{1}{v}-\frac{1}{u} \ge \frac{u-v}{u^2},
  \qquad
  \frac{1}{\sqrt{v}}-\frac{1}{\sqrt{u}} \ge \frac{u-v}{2u^{3/2}}.
\]
With $u=\xi_k$, $v=\xi_{k+1}$,
\[
  \Phi(\xi_{k+1})-\Phi(\xi_k)
   \ge 
  \frac{9}{2} \frac{\xi_k-\xi_{k+1}}{\xi_k^{2}}
   + 
  \frac{6\sqrt{H}}{L\sqrt{D}}\cdot\frac{\xi_k-\xi_{k+1}}{2 \xi_k^{3/2}}.
\]

Consider the two cases:

\emph{(i) } if  $\xi_k-\xi_{k+1}\ge \frac{2}{9LD^2}\xi_k^2$, then
\[
  \Phi(\xi_{k+1})-\Phi(\xi_k) \ge \frac{9}{2}\cdot\frac{2}{9LD^2} = \frac{1}{LD^2}.
\]

\emph{(ii) } if $\xi_k-\xi_{k+1}\ge \frac{1}{3\sqrt{HD^{3}}}\xi_k^{3/2}$ , then
\[
  \Phi(\xi_{k+1})-\Phi(\xi_k) \ge 
  \frac{6\sqrt{H}}{L\sqrt{D}}\cdot\frac{1}{2}\cdot\frac{1}{3\sqrt{H}D^{3/2}}
   = \frac{1}{LD^2}.
\]

So in all cases,
\begin{equation}\label{eq:tele-noab}
  \Phi(\xi_k) \ge \Phi(\xi_0) + \frac{k}{LD^2}.
\end{equation}

To turn \eqref{eq:tele-noab} into an upper bound on $\xi_k$, use for any $\beta>0$ and any $\xi>0$:
\begin{equation}\label{eq:any-beta}
  \frac{6\sqrt{H}}{L\sqrt{D}}\cdot\frac{1}{\sqrt{\xi}}
   \le 
  \frac{\beta}{\xi}
   + 
  \frac{1}{4\beta}\Bigl(\frac{6\sqrt{H}}{L\sqrt{D}}\Bigr)^{2}
   = 
  \frac{\beta}{\xi} + \frac{9H}{\beta L^2 D}.
\end{equation}
Hence
\[
  \Phi(\xi_k)
   = 
  \frac{9}{2} \frac{1}{\xi_k}  +  \frac{6\sqrt{H}}{L\sqrt{D}} \frac{1}{\sqrt{\xi_k}}
   \le 
  \frac{\tfrac{9}{2}+\beta}{\xi_k}  +  \frac{9H}{\beta L^2 D}.
\]
Combine with \eqref{eq:tele-noab} and rearrange:
\[
  \Phi(\xi_0) + \frac{k}{LD^2}
   \le 
  \frac{\tfrac{9}{2}+\beta}{\xi_k} + \frac{9H}{\beta L^2 D}
  \quad\text{or}\quad
  \xi_k
   \le 
  \frac{\bigl(\tfrac{9}{2}+\beta\bigr)}{ \Phi(\xi_0) + \frac{k}{LD^2} - \frac{9H}{\beta L^2 D} },
\]
which is \eqref{eq:global-noab-beta} (with $\beta=9/2$).

Assume we are in the non-accelerated region, i.e.
\[
  \xi_k  \le  \frac{9}{4} \frac{L^2 D}{H}
  \quad\text{or}\quad
  \frac{H}{L^2 D}  \le  \frac{9}{4} \frac{1}{\xi_k}.
\]
Taking square roots (all quantities are positive),
\[
  \frac{\sqrt{H}}{L\sqrt{D}}
   \le 
  \frac{3}{2} \frac{1}{\sqrt{\xi_k}}.
\]
Hence, we get for the potential
\[
  \Phi(\xi_k)
   = 
  \frac{9}{2} \frac{1}{\xi_k}
   + 
  \frac{6\sqrt{H}}{L\sqrt{D}} \frac{1}{\sqrt{\xi_k}}
   \le 
  \frac{9}{2} \frac{1}{\xi_k}
   + 
  6\cdot\frac{3}{2}\cdot\frac{1}{\xi_k}
   = 
  \frac{27}{2} \frac{1}{\xi_k}.
\]
Combining with the telescoping one has
\[
  \Phi(\xi_k) \ge \Phi(\xi_0) + \frac{k}{LD^2},
\]
and finally obtains
\[
  \Phi(\xi_0)+\frac{k}{LD^2}
   \le 
  \Phi(\xi_k)
   \le 
  \frac{27}{2} \frac{1}{\xi_k}
  \quad\text{or}\quad
  \xi_k
   \le 
  \frac{27}{2} \frac{LD^2}{ k+LD^2 \Phi(\xi_0) }.
\]

\end{proof}

\subsection{Relaxed smoothness}

\begin{lemma}[Scalar logistic bound]\label{lem:scalar-tight-enough}
For all $s\in\R$,
\[
  \sigma(s) \sigma(-s) \le \frac{2}{3\sqrt{3}} \sqrt{\log\bigl(1+e^{-s}\bigr)},
\]
where  $\sigma(s)=\frac{1}{1+e^{-s}}$
\end{lemma}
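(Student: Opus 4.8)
The plan is to reduce everything to the single scalar variable $p:=\sigma(s)\in(0,1)$. Since $\sigma(-s)=1-\sigma(s)$ we have $\sigma(s)\sigma(-s)=p(1-p)$, and since $1/p=1+e^{-s}$ we have $\log(1+e^{-s})=\log(1/p)=-\log p$. Thus the claim is equivalent to
\[
  p(1-p)\le \frac{2}{3\sqrt{3}}\sqrt{-\log p},\qquad p\in(0,1).
\]
Both sides are nonnegative on $(0,1)$, so I would square and instead establish the polynomial–logarithmic inequality
\[
  p^2(1-p)^2\le \frac{4}{27}\bigl(-\log p\bigr),
\]
using that $\bigl(\tfrac{2}{3\sqrt{3}}\bigr)^2=\tfrac{4}{27}$.

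The key step is to split the left-hand side as $p^2(1-p)^2=\bigl[p^2(1-p)\bigr](1-p)$ and bound the two factors separately. First, $\max_{p\in[0,1]}p^2(1-p)=\tfrac{4}{27}$, attained at $p=2/3$; this is exactly the one-dimensional maximization already carried out in the logistic example (\cref{ex:logistic}), so I would cite it or reprove it in one line by setting $\tfrac{d}{dp}\bigl[p^2(1-p)\bigr]=p(2-3p)$ to zero. Second, the elementary bound $\log p\le p-1$, i.e.\ $1-p\le-\log p$, disposes of the remaining factor. Chaining these gives
\[
  p^2(1-p)^2\le \tfrac{4}{27}(1-p)\le \tfrac{4}{27}\bigl(-\log p\bigr),
\]
which is precisely the squared inequality; taking square roots and undoing the substitution $p=\sigma(s)$ recovers the stated bound.

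I do not expect a genuine obstacle here: the argument is a substitution followed by two standard single-variable facts. The only mild subtlety is making the squaring legitimate — both sides must be nonnegative, which holds because $p\in[0,1]$ forces $p(1-p)\ge0$ and $-\log p\ge0$ on $(0,1)$ — together with checking that the factorization routes the maximization onto $p^2(1-p)$, whose maximum is exactly the constant $4/27$ appearing in the statement. This alignment of the extremal constant is what lets the tight factor $p^2(1-p)$ absorb the crude estimate $1-p\le-\log p$ and still close the inequality.
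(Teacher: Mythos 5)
Your proof is correct and is essentially the paper's own argument in different coordinates: the paper substitutes $t=e^{-s}$, bounds the ratio $\sigma(s)\sigma(-s)/\sqrt{\log(1+t)}$ using $\log(1+t)\ge \tfrac{t}{1+t}$ (which is exactly your $1-p\le-\log p$ under $p=\tfrac{1}{1+t}$), and then maximizes $\tfrac{\sqrt{t}}{(1+t)^{3/2}}$, which is precisely your $\sqrt{p^2(1-p)}\le\sqrt{4/27}$. The only differences are cosmetic — working with $p=\sigma(s)$ instead of $t=e^{-s}$, and squaring rather than bounding a ratio — so the two proofs share the same key lemma and the same extremal constant $4/27$.
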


\begin{proof}
Let $t=e^{-s}>0$. Then
\[
  \sigma(s)\sigma(-s)
   = \frac{t}{(1+t)^2},
  \qquad
  \log\bigl(1+e^{-s}\bigr)=\log(1+t).
\]
We use the elementary inequality
\[
  \log(1+t) \ge \frac{t}{1+t}\qquad\text{for all }t>0,
\]
which follows from the convexity of $-\log$ or by defining
$\phi(t)=\log(1+t)-\frac{t}{1+t}$ and noting $\phi'(t)=\frac{t}{(1+t)^2}\ge0$ with $\phi(0)=0$.
Hence
\[
  \frac{\sigma(s)\sigma(-s)}{\sqrt{\log(1+t)}}
   = \frac{t}{(1+t)^2 \sqrt{\log(1+t)}}
   \le  \frac{t}{(1+t)^2}\cdot\frac{1}{\sqrt{t/(1+t)}}
   =  \frac{\sqrt{t}}{(1+t)^{3/2}} \leq \frac{2}{3\sqrt{3}}.
\]
Therefore,
\[
  \sigma(s)\sigma(-s)
   \le  \frac{2}{3\sqrt{3}} \sqrt{\log(1+t)}
   =  \frac{2}{3\sqrt{3}} \sqrt{\log\bigl(1+e^{-s}\bigr)}.
\]
\end{proof}

\begin{example}[Logistic function, $L_0=0$]\label{ex:dir-one-app}
Let $a\in\R^d\setminus\{0\}$, $s=\inp{a}{x}$, and $f(x)=\log(1+e^{-s})$. Then $f_\star=0$ and for all $x\in\R^d$,
\[
\frac{\inp{\nabla f(x)}{\nabla^2 f(x) \nabla f(x)}}{\norm{\nabla f(x)}^2}
 \le \frac{2}{3\sqrt{3}} \norm{a}^2 \bigl(f(x)-f_\star\bigr)^{1/2}.
\]
\end{example}

\begin{proof}
Write $\sigma(t)=\frac{1}{1+e^{-t}}$. Then
\[
\nabla f(x)=-a \sigma(-s),\qquad \nabla^2 f(x)=aa^\top \sigma(s)\sigma(-s),
\]
and hence
\[
\frac{\inp{\nabla f}{\nabla^2 f \nabla f}}{\norm{\nabla f}^2}
=\frac{\norm{a}^4 \sigma(s)\sigma(-s)^3}{\norm{a}^2 \sigma(-s)^2}
=\norm{a}^2 \sigma(s)\sigma(-s).
\]

By~\cref{lem:scalar-tight-enough}

\[
\frac{\inp{\nabla f}{\nabla^2 f \nabla f}}{\norm{\nabla f}^2}
\le \frac{2}{3\sqrt{3}} \norm{a}^2 \sqrt{f(x)}=\frac{2}{3\sqrt{3}} \norm{a}^2 \sqrt{f(x)-f_\star},
\]
\end{proof}
where we used that $f_\star=0$.

\begin{lemma}[Zero infimum under linear separability]\label{lem:sep}
If there exists $v\in\R^d$ and $\gamma>0$ such that $y_i\inp{a_i}{v}\ge\gamma$ for all $i$,
then for $f(x)=\frac1n\sum_{i=1}^n \log\bigl(1+e^{-y_i\inp{a_i}{x}}\bigr)$ one has $f_\star=0$.
\end{lemma}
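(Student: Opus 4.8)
The plan is to establish the two bounds $f_\star\ge 0$ and $f_\star\le 0$ separately, since $f_\star=\inf_x f(x)$. The lower bound is immediate: for every real $z$ one has $\log(1+e^{-z})>0$, so each summand is strictly positive and hence $f(x)>0$ for all $x\in\R^d$, giving $f_\star\ge 0$.

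For the matching upper bound I would probe $f$ along the ray $x=tv$, $t>0$, exploiting the separating direction $v$. By the separability hypothesis $y_i\inp{a_i}{tv}=t\,y_i\inp{a_i}{v}\ge t\gamma$ for every $i$, and since $z\mapsto\log(1+e^{-z})$ is strictly decreasing (its derivative is $-e^{-z}/(1+e^{-z})<0$), each summand is bounded above by $\log(1+e^{-t\gamma})$. Averaging over $i$ yields $f(tv)\le\log(1+e^{-t\gamma})$. Letting $t\to\infty$ the right-hand side tends to $0$, so $f_\star\le\inf_{t>0}f(tv)\le 0$. Combining the two bounds gives $f_\star=0$.

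There is no genuine obstacle in this argument; the single point worth flagging is that the value $0$ is approached only as $\norm{tv}\to\infty$ and is never attained (under strict separability no finite minimizer exists). Thus $f_\star$ must be read as an infimum rather than a minimum, which is exactly how it is used in \cref{as:relaxed} and in \cref{ex:dir-emp}, where only the numerical value $f_\star=0$—and not the existence of a minimizer—is required.
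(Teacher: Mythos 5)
Your proposal is correct and follows essentially the same route as the paper: probe $f$ along the ray $tv$, use monotonicity of $z\mapsto\log(1+e^{-z})$ together with $y_i\inp{a_i}{v}\ge\gamma$ to get $f(tv)\le\log(1+e^{-t\gamma})\to 0$, and combine with positivity of each summand. Your write-up is in fact slightly more complete, since the paper leaves the lower bound $f_\star\ge 0$ implicit, and your remark that the infimum is not attained under strict separability is a correct and worthwhile clarification.
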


\begin{proof}
For any $\tau>0$,
\[
    f(\tau v)=\frac1n\sum_{i=1}^n \log\bigl(1+e^{-\tau y_i\inp{a_i}{v}}\bigr)
     \le \log\bigl(1+e^{-\tau\gamma}\bigr) \xrightarrow[\tau\to\infty]{} 0.
\]
Hence $\inf_x f(x)=0$.
\end{proof}

\begin{example}[Empirical logistic; $L_0=0$ under separability]\label{ex:dir-emp-app}

Let there exists $v\in\R^d$ and $\gamma>0$ such that $y_i\inp{a_i}{v}\ge\gamma$ for all $i$,
then for $f(x)=\frac1n\sum_{i=1}^n \log\bigl(1+e^{-y_i\inp{a_i}{x}}\bigr)$ one has $f_\star=0$ and for all $x\in\R^d$,
\[
    \frac{\inp{\nabla f(x)}{\nabla^2 f(x) \nabla f(x)}}{\norm{\nabla f(x)}^2}
     \le \frac{2}{3\sqrt{3}} \sqrt{\frac{1}{n}\sum_{i=1}^n \norm{a_i}^4} \bigl(f(x)-f_\star\bigr)^{1/2}.
\]
\end{example}

\begin{proof}
If $\nabla f(x)=0$, the left-hand side is $0$ and the claim holds. Hence assume $\nabla f(x)\neq 0$.

With $\sigma(t)=\frac{1}{1+e^{-t}}$
\[
    \nabla f(x)=-\frac1n\sum_{i=1}^n y_i a_i \sigma(-s_i),
    \qquad
    \nabla^2 f(x)=\frac1n\sum_{i=1}^n w_i a_i a_i^\top,\quad
    w_i:=\sigma(s_i)\sigma(-s_i), \qquad s_i=y_i\inp{a_i}{x}.
\]

Using the quadratic form of $\nabla^2 f(x)$
\begin{align*}
    \frac{\inp{\nabla f(x)}{\nabla^2 f(x) \nabla f(x)}}{\norm{\nabla f(x)}^2}
    &=\frac{\inp{\nabla f(x)}{\left(\frac1n\sum_{i=1}^n w_i a_i a_i^\top\right)\nabla f(x)}}{\norm{\nabla f(x)}^2} 
    \\
    &=\frac{1}{n}\sum_{i=1}^n w_i \frac{\big(\inp{a_i}{\nabla f(x)}\big)^2}{\norm{\nabla f(x)}^2}.
\end{align*}

For each $i$, $(\inp{a_i}{\nabla f(x)})^2\le \norm{a_i}^2 \norm{\nabla f(x)}^2$ by Cauchy--Schwarz implying
\[
    \frac{\inp{\nabla f(x)}{\nabla^2 f(x) \nabla f(x)}}{\norm{\nabla f(x)}^2}
     \le \frac{1}{n}\sum_{i=1}^n w_i \norm{a_i}^2.
\]

By Lemma~\ref{lem:scalar-tight-enough},
\(
w_i=\sigma(s_i)\sigma(-s_i)\le \frac{2}{3\sqrt{3}}\sqrt{\log\bigl(1+e^{-s_i}\bigr)}.
\)
Therefore,
\begin{align*}
    \frac{1}{n}\sum_{i=1}^n w_i \norm{a_i}^2
    &\le \frac{2}{3\sqrt{3}}\cdot \frac{1}{n}\sum_{i=1}^n \norm{a_i}^2 \sqrt{\log\bigl(1+e^{-s_i}\bigr)} 
    \\
    &\le \frac{2}{3\sqrt{3}} 
    \left(\frac{1}{n}\sum_{i=1}^n \norm{a_i}^4\right)^{1/2}
    \left(\frac{1}{n}\sum_{i=1}^n \log\bigl(1+e^{-s_i}\bigr)\right)^{1/2}
    \qquad\text{(Cauchy--Schwarz in $\R^n$)}
    \\
    &= \frac{2}{3\sqrt{3}} 
    \left(\frac{1}{n}\sum_{i=1}^n \norm{a_i}^4\right)^{1/2} \sqrt{f(x)}.
\end{align*}

Under linear separability, $f_\star=0$ (see Lemma~\ref{lem:sep}). Thus $\sqrt{f(x)}=\sqrt{f(x)-f_\star}$, and combining steps above yields
\[
    \frac{\inp{\nabla f(x)}{\nabla^2 f(x) \nabla f(x)}}{\norm{\nabla f(x)}^2}
     \le \frac{2}{3\sqrt{3}} \sqrt{\frac{1}{n}\sum_{i=1}^n \norm{a_i}^4} \sqrt{f(x)-f_\star}.
\]
\end{proof}
\begin{corollary}
    Let~\cref{as:convexity,as:bounded-level-set,as:Lsmooth,as:relaxed} hold. Then the total number of iterations needed to reach $f(x^K)-f_\star\le\varepsilon$ satisfies
    \begin{equation*}
        K = \cO\left(\frac{L_0D^2 + (f_0 - f_*)}{\varepsilon} + \frac{\rbr*{L_1 D^2 +\sqrt{HD^3} + \sqrt{f_0 - f_*}}^2}{\sqrt{\varepsilon}} \right).
    \end{equation*} 
\end{corollary}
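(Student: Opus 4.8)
The plan is to reduce everything to the two per-iterate guarantees already established in \cref{sec:core} and then rerun the potential-telescoping argument of \cref{thm:lsmooth global}, with the single modification that the quadratic-model constant is now the state-dependent quantity $L_0+L_1\sqrt{\xi_k}$ coming from \cref{as:relaxed}. Write $\xi_k\coloneqq f(x^k)-f_\star$. Because \cref{alg:cacuadgd} selects $M_k$ as the maximum in \eqref{eq:certificate}, both \eqref{eq:cubic-model decrease} and \eqref{eq:quadratic-model decrease} hold simultaneously at every step, exactly as in \cref{lem:two-regime-ap}. Combining them with convexity and \cref{as:bounded-level-set}, which give $\|\nabla f(x^k)\|\ge \xi_k/D$, and invoking $L_k\le L_0+L_1\sqrt{\xi_k}$ from \cref{as:relaxed}, yields the two scalar recursions
\[
\xi_k-\xi_{k+1}\ \ge\ \frac{1}{3\sqrt{HD^3}}\,\xi_k^{3/2},
\qquad
\xi_k-\xi_{k+1}\ \ge\ \frac{2}{9\bigl(L_0+L_1\sqrt{\xi_k}\bigr)D^2}\,\xi_k^{2}.
\]

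First I would split the second recursion according to whether $L_1\sqrt{\xi_k}\le L_0$ or $L_1\sqrt{\xi_k}\ge L_0$. In the large-gap regime the denominator is dominated by $L_1\sqrt{\xi_k}$, so the quadratic decrease degrades to a decrease of order $\xi_k^{3/2}$ with constant of order $1/(L_1D^2)$; this is of the same power as the cubic term and should be merged with it. In the small-gap regime the denominator is of order $L_0$, giving a genuine decrease of order $\xi_k^2$ with constant of order $1/(L_0D^2)$. Thus, after the split, every iterate enjoys a decrease of order $\xi_k^{3/2}$ whose constant is built jointly from $\sqrt{HD^3}$ and $L_1D^2$, and, once $\xi_k$ is small enough, additionally a decrease of order $\xi_k^2$ with constant $L_0D^2$.

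I would then reuse the potential $\Phi(\xi)=\frac{c_1}{\xi}+\frac{c_2}{\sqrt{\xi}}$ from \cref{thm:lsmooth global}: the elementary bounds $\frac1v-\frac1u\ge\frac{u-v}{u^2}$ and $\frac1{\sqrt v}-\frac1{\sqrt u}\ge\frac{u-v}{2u^{3/2}}$ turn the $\xi_k^2$ decrease into a uniform increase of the $1/\xi$-part and the $\xi_k^{3/2}$ decrease into a uniform increase of the $1/\sqrt\xi$-part. Choosing $c_1,c_2$ so that both increments are bounded below by a common $\delta$, telescoping gives $\Phi(\xi_K)\ge\Phi(\xi_0)+K\delta$; inverting $\Phi$ at level $\varepsilon$ with the Young-type bound $\frac{b}{\sqrt\xi}\le\frac{\beta}{\xi}+\frac{b^2}{4\beta}$ (to pass from $1/\sqrt\varepsilon$ to $1/\varepsilon$ contributions) produces the two advertised budgets $\cO\bigl((L_0D^2+(f_0-f_\star))/\varepsilon\bigr)$ and $\cO\bigl((L_1D^2+\sqrt{HD^3}+\sqrt{f_0-f_\star})^2/\sqrt\varepsilon\bigr)$, the initialization terms entering through $\Phi(\xi_0)$. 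Equivalently, one can count iterations directly, applying \cite[Lemma A.1]{nesterov2019inexact} with $\beta=1/2$ on the accelerated ($\xi^{3/2}$) phase and a standard $1/k$ telescoping on the $L_0$-quadratic phase, exactly as in the corollaries following \cref{lem:two-regime}.

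The main obstacle is the state dependence of the effective smoothness $L_0+L_1\sqrt{\xi_k}$: it blocks any fixed-$L$ argument and forces the regime split above. The delicate point is to verify that the $L_1$-part genuinely behaves like the cubic term, so that $L_1D^2$ and $\sqrt{HD^3}$ add inside a single $1/\sqrt\varepsilon$ budget rather than interacting multiplicatively, while only the $L_0$-part feeds the $1/\varepsilon$ budget; keeping these three constants separated through the potential inversion, and correctly absorbing the initialization term $\sqrt{f_0-f_\star}$ from $\Phi(\xi_0)$, is where the bookkeeping must be done carefully.
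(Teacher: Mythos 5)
Your proposal is correct and is essentially the paper's own proof: the paper performs the same split on whether $L_0 \le L_1\bigl(f(x^k)-f_\star\bigr)^{1/2}$, absorbs the $L_1$-part into the cubic-model decrease by taking $M_k=\max\bigl(H,\,9L_1^2D\bigr)$ in \eqref{eq:per-condition} (which gives exactly your merged constant, since $\tfrac{1}{3}\bigl(9L_1^2D\cdot D^3\bigr)^{-1/2}=\tfrac{1}{9L_1D^2}$), treats the remaining regime as standard smoothness with $L=2L_0$, and then reruns the potential argument of \cref{thm lsmoothglobal ap} with these substitutions. The one imprecision is your claim that the cubic and quadratic model decreases hold \emph{simultaneously}: the algorithm only guarantees the smaller of the two (equivalently, the either--or statement of \cref{lem:two-regime-ap}), but since your regime split and subsequent merge use only that disjunction, the argument is unaffected.
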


\begin{proof}
    We begin with bounding the following term and making it non-positive.
    \begin{equation*}
        \inp{\nabla f(x^k)}{\nabla^2 f(x^k) \nabla f(x^k)} -  \frac{2\sqrt{M_k}}3  \norm{\nabla f(x^k)}^{5/2} \leq \norm{\nabla f(x^k)}^{2} \rbr*{L_0 + L_1 \rbr{f(x) - f_\star}^{1/2} - \frac23 \sqrt{M_k \norm{\nabla f(x^k)}}}.
    \end{equation*}

    \begin{multline*}
        \text{\textit{(i)}   }  L_0 \leq L_1(f(x^k) - f_\star)^{1/2}, \text{ then } \quad L_0 + L_1 \rbr{f(x) - f_\star}^{1/2} - \frac23 \sqrt{M_k \norm{\nabla f(x^k)}}
        \\
        \leq 2L_1 \rbr{f(x) - f_\star}^{1/2} - \frac23 \sqrt{M_k \norm{\nabla f(x^k)}} \leq 2L_1\sqrt{D\norm{\nabla f(x^k)}} - \frac23 \sqrt{M_k \norm{\nabla f(x^k)}}
    \end{multline*}
    
    by inequality \eqref{eq:per-condition} with $\alpha = 0.5$ and $M_k = M = \max{\rbr*{H, 9L_1^2D}}$ yields
     \begin{equation*}
        f\rbr*{x^k - \frac{\nabla f(x^k)}{\sqrt{M\norm{\nabla f(x^k)}}}} 
        \leq 
        f(x^k) -  \frac{1}{3\sqrt{M}} \norm{\nabla f(x^k)}^{3/2}.
    \end{equation*}
    
    $L_0 \geq L_1(f(x^k) - f_\star)^{1/2}$ reduces to the standard smoothness case with $L = 2L_0$, so that, when
    
    \begin{equation*}
        \text{\textit{(ii)}   } L_0 \ge L_1(f(x^k) - f_\star)^{1/2} \text{ and } (f(x^k) - f_\star)^{1/2} \ge \frac32 \frac{2L_0D^2}{\sqrt{HD^3}}, \text{ then }
        f(x^{k+1}) \leq f(x^k) -  \frac{1}{3\sqrt{HD^3}} \rbr*{f(x^k) - f_\star}^{3/2}
    \end{equation*}
    
    \begin{equation*}
        \text{\textit{(iii)}   } L_0 \ge L_1(f(x^k) - f_\star)^{1/2} \text{ and } (f(x^k) - f_\star)^{1/2} \le \frac32 \frac{2L_0D^2}{\sqrt{HD^3}}, \text{ then }
        f(x^{k+1})
        \leq f(x^k) -  \frac{1}{9L_0D^2} \rbr*{f(x^k) - f_\star}^{2} .
    \end{equation*}

    Then first two cases reduce to 
    \begin{equation*}
        f(x^{k+1}) \leq f(x^k) -  \frac{1}{3\sqrt{\max{\rbr*{H, 9L_1^2D}}D^3}} \rbr*{f(x^k) - f_\star}^{3/2},
    \end{equation*}
    which allows to follow exactly the proof of \cref{thm lsmoothglobal ap} with $H$ replaced with $\max{\rbr*{H, 9L_1^2D}}$ and $L$ with $2L_0$.
\end{proof}

\newpage
\section{Missing proofs for CaCuSGD }

\begin{theorem} 
    Let \cref{as:stoch} holds. Assume that $\widehat L \ge L$ and $\widehat H \ge H$. Then the method
    \[
        x^{k+1} = x^k - \frac{g^k}{\sqrt{M_k\norm{g^k}}}, \quad M_k \coloneqq
        \begin{cases}
            \widehat L^2 {\norm{g^k}}^{-1}, & 
            \text{if}\quad {4 \inp{g^k}{H^k g^k}^{2}}{\norm{g^k}^{-5}} \ge \widehat H,\\
            \widehat H, & 
            \text{otherwise,}
        \end{cases} \quad \text{satisfies}
    \]
    \begin{align*}
        &\text{ (i) } \E f(x^{k+1}) \le \E f(x^k)
        -\frac{1}{4\widehat L}\E\|\nabla f(x^k)\|^2
        +\Big(\frac{2L}{\widehat L^2}+\frac{2L^2}{3\widehat L^3}\Big) \sigma_g^2
        +\frac{\sigma_H^3}{6 \widehat H^2},  
        &\text{if}\quad {4 \inp{g^k}{H^k g^k}^{2}}{\norm{g^k}^{-5}} \ge \widehat H,
        \\
        &\text{(ii) } \E f(x^{k+1})
        \le
        \E f(x^k)
        -\frac{1}{3\sqrt{\widehat H}} \E\sbr*{\|\nabla f(x^k)\|^{3/2}}
        +\frac{7}{4\sqrt{\widehat H}}\sigma_g^{3/2}
        +\frac{32}{3 \widehat H^2} \sigma_H^{3}, 
        &\text{otherwise}.
    \end{align*}
\end{theorem}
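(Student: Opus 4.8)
The plan is to derive, for each regime separately, a one-step bound by substituting the corresponding update into the cubic upper model \eqref{eq:taylor} for $f$ (valid since $f$ is $2H$-Hessian smooth), and then to separate the ``signal'' $\nabla f(x^k)$ from the stochastic perturbations $\delta_g \coloneqq g^k - \nabla f(x^k)$ and $\delta_H \coloneqq H^k - \nabla^2 f(x^k)$ using the moment bounds \eqref{eq:xi-var}, \eqref{eq:hess-xi-var} together with \cref{lem:young-pq} and \cref{lem:three-halves-split}. Writing $s^k = x^{k+1}-x^k$, the starting inequality is
\[
    f(x^{k+1}) \le f(x^k) + \inp{\nabla f(x^k)}{s^k} + \tfrac12\inp{\nabla^2 f(x^k) s^k}{s^k} + \tfrac{H}{3}\norm{s^k}^3 ,
\]
and in both regimes the curvature term is rewritten through the stochastic Hessian as $\inp{\nabla^2 f(x^k) g^k}{g^k} = \inp{H^k g^k}{g^k} - \inp{\delta_H g^k}{g^k}$, so that the defining inequality of the regime applies to $\inp{H^k g^k}{g^k}$ while the deviation is absorbed via $\le \norm{\delta_H}\norm{g^k}^2$.

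For the cubic regime (ii), $s^k = -g^k/\sqrt{\widehat H\norm{g^k}}$ gives $\norm{s^k}^3 = \norm{g^k}^{3/2}\widehat H^{-3/2}$, hence $\tfrac{H}{3}\norm{s^k}^3 \le \tfrac1{3\sqrt{\widehat H}}\norm{g^k}^{3/2}$ since $H\le\widehat H$. The regime condition $4\inp{g^k}{H^k g^k}^2\norm{g^k}^{-5} < \widehat H$ yields $\inp{H^k g^k}{g^k} < \tfrac{\sqrt{\widehat H}}2\norm{g^k}^{5/2}$, controlling the curvature contribution $\tfrac{1}{2\widehat H\norm{g^k}}\inp{H^k g^k}{g^k} < \tfrac1{4\sqrt{\widehat H}}\norm{g^k}^{3/2}$. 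Expanding $\inp{\nabla f(x^k)}{g^k} = \norm{g^k}^2 - \inp{\delta_g}{g^k}$ produces the leading decrease $-\norm{g^k}^{3/2}/\sqrt{\widehat H}$ plus a cross-term. I would then (a) pass from $\norm{g^k}^{3/2}$ to $\norm{\nabla f(x^k)}^{3/2}$ via \cref{lem:three-halves-split}, dumping a $\norm{\delta_g}^{3/2}$ remainder; (b) split the gradient cross-term $\inp{\delta_g}{g^k}/\sqrt{\widehat H\norm{g^k}}\le \norm{\delta_g}\norm{g^k}^{1/2}/\sqrt{\widehat H}$ by \cref{lem:young-pq} at exponents $(3/2,3)$ into an absorbable $\norm{g^k}^{3/2}$ piece and a $\norm{\delta_g}^{3/2}$ floor; and (c) split $\norm{\delta_H}\norm{g^k}/(2\widehat H)$ at exponents $(3,3/2)$ into an absorbable $\norm{g^k}^{3/2}$ piece and a $\norm{\delta_H}^3/\widehat H^2$ floor. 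Taking expectations with $\E\norm{\delta_g}^{3/2} \le (\E\norm{\delta_g}^2)^{3/4} \le \sigma_g^{3/2}$ (Jensen) and $\E\norm{\delta_H}^3 \le \sigma_H^3$ gives bound (ii), the Young weights being tuned so the surviving coefficient is exactly $\tfrac1{3\sqrt{\widehat H}}$.

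For the SGD regime (i), the step collapses to $s^k = -g^k/\widehat L$, so $\norm{s^k}^2=\norm{g^k}^2/\widehat L^2$ and $\norm{s^k}^3=\norm{g^k}^3/\widehat L^3$. The regime condition $4\inp{g^k}{H^k g^k}^2\norm{g^k}^{-5}\ge\widehat H$ together with $\norm{H^k}\le L$ from \eqref{eq:lipschitzness} forces $\norm{g^k}\le 4L^2/\widehat H$, which is precisely what tames the cubic remainder $\tfrac{H}{3\widehat L^3}\norm{g^k}^3$, while the quadratic term is bounded through $\norm{\nabla^2 f(x^k)}\le L$. Using $\E\inp{\nabla f(x^k)}{g^k}=\norm{\nabla f(x^k)}^2$ and $\E\norm{g^k}^2\le\norm{\nabla f(x^k)}^2+\sigma_g^2$, the $\norm{\nabla f(x^k)}^2$ coefficient is kept at $-\tfrac1{4\widehat L}$ (using $\widehat L\ge L$) and the residual curvature/cubic mass is pushed onto the $\sigma_g^2$ floor; the Hessian-deviation piece appearing when $\inp{H^k g^k}{g^k}$ is transferred to the true curvature is again split by \cref{lem:young-pq} into the $\sigma_H^3/\widehat H^2$ floor. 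Collecting constants yields bound (i).

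The main obstacle is the fractional-power bookkeeping: routing every noise cross-term onto its correct floor ($\sigma_g^2$, $\sigma_g^{3/2}$, or $\sigma_H^3/\widehat H^2$) while leaving the advertised decrease coefficients $\tfrac1{4\widehat L}$ and $\tfrac1{3\sqrt{\widehat H}}$ intact, since the exponents and weights of the Young splits are exactly what pin the constants $\tfrac74$, $\tfrac{32}{3}$, and $\tfrac16$. A secondary difficulty is that the active regime is selected by the random pair $(g^k,H^k)$, so the displayed expectations are really bounds valid pointwise on each regime event; turning them into a clean telescoped rate would require handling the coupling between the regime indicator and the noise, which is precisely why the result is stated in per-iteration form.
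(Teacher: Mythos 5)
Your plan follows essentially the same route as the paper's own proof: start from the cubic Taylor bound \eqref{eq:taylor}, insert $\pm g^k$ and $\pm H^k$ to isolate the stochastic deviations, use the regime test to bound $\inp{g^k}{H^k g^k}$ in the cubic case and to force $\norm{g^k}\le 4L^2/\widehat H$ in the SGD case, route the noise onto the $\sigma_g^{3/2}$, $\sigma_g^{2}$ and $\sigma_H^{3}/\widehat H^{2}$ floors with the same Young splits (\cref{lem:young-pq}), pass from $\norm{g^k}^{3/2}$ to $\norm{\nabla f(x^k)}^{3/2}$ via \cref{lem:three-halves-split}, and finish with Jensen, $\E\norm{g^k-\nabla f(x^k)}^{3/2}\le\sigma_g^{3/2}$, and $\E\norm{H^k-\nabla^2 f(x^k)}^{3}\le\sigma_H^{3}$. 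The only caveats are ones you share with the paper itself: keeping the $-\tfrac{1}{4\widehat L}$ coefficient actually requires $\widehat L\gtrsim 3L$ (the paper's proof sets $\widehat L>3L$ even though the theorem assumes only $\widehat L\ge L$), and after the $1/\sqrt{2}$ loss in \cref{lem:three-halves-split} the surviving cubic-regime coefficient is $\tfrac{1}{3\sqrt{2\widehat H}}$ rather than the advertised $\tfrac{1}{3\sqrt{\widehat H}}$.
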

\begin{proof}
    For the sake of simplicity we write $g^k = g(x^k, \xi_k)$  and $H^k = H(x^k, \xi_k)$
    
    There exist $M_k$ is s.t.
     \begin{multline*}
        f_k\rbr*{x^k - \frac{g^k}{\sqrt{M_k\norm{g^k}}}} 
        \leq
        f_k(x^k)
        + \frac{1}{2} \sbr*{ \frac{\inp{g^k}{H^k g^k}}{M_k \norm{g^k}} -  \frac{1}{\sqrt{M_k}} \norm{g^k}^{3/2}}-  \frac{1}{6\sqrt{M_k}} \norm{g^k}^{3/2}
        \leq
        -  \frac{5}{12\sqrt{M_k}} \norm{g^k}^{3/2},
    \end{multline*}
    i.e.  $M_k \geq \max{\rbr*{H, \frac{4\inp*{g^k}{H^k g^k}^2 }{\norm{g^k}^5}}}$. Next we consider the cases.

    \textbf{Case} $ \frac{4\inp*{g^k}{H^k g^k}^2 }{\norm{g^k}^5} \le \widehat H.$ Then $M_k = H$ and 
    \begin{equation*}
        - \langle g^k, s^k\rangle + \frac12 \langle s^k, H^k s^k\rangle + \frac{M_k}{3}\|s^k\|^3
        \le
        - \frac{5}{12\sqrt{M_k}} \|g^k\|^{3/2},
    \end{equation*}
    where $s^k = \frac{g^k}{\sqrt{M_k \norm{g^k}}}$. Then
    
    \begin{multline*}
        f(x^{k+1})
        \le
        f(x^k)
        - \inp{\nabla f(x^k)}{s^k}
        +\frac{1}{2} \inp{s^k}{\nabla^2 f(x^k) s^k}
        + \frac{H}{3}\norm{s^k}^3
        \\
        =
        f(x^k)
        - \inp{\nabla f(x^k) \pm g^k}{s^k}
        +\frac{1}{2} \inp*{s^k}{\nabla^2 f(x^k) s^k \pm H^k s^k}
        + \frac{H}{3}\norm{s^k}^3
        \\
        = 
        f(x^k)
        - \inp{g^k}{s^k} + \inp{s^k}{H^k s^k} + \frac{H}{3}\norm{s^k}^3
        \\
        - \inp{\nabla f(x^k) - g^k}{s^k}
        +\frac{1}{2} \frac{\inp*{g^k}{\rbr*{\nabla^2 f(x^k) - H^k} g^k}}{H\norm{g^k}}.
    \end{multline*}

    Next, one applies Young's inequality ($p=3, q=3/2$):
    \begin{equation*}
        u v
       \le \frac13 \alpha^3 u^3 + \frac23 \alpha^{-3/2}v^{3/2}.
    \end{equation*}
    with $u = \|s^k\|$, $v = \|\nabla f(x^k) - g^k\|$, $ \alpha^3 = H/8$, and
    
    \begin{equation*}
        - \langle \nabla f(x^k) - g^k, s^k\rangle
        \le \frac{1}{24\sqrt{H}} \|g^k\|^{3/2}
          + \frac{4\sqrt{2}}{3}\frac{1}{\sqrt{H}}\|\nabla f(x^k) - g^k\|^{3/2}.
    \end{equation*}
    and with 
    $u = \norm{\nabla^2 f(x^k) - H^k}$, $v = \frac{\norm{g^k}}{2 H}$, $\alpha = 2^{5/3} H^{-2/3}$
    for
    \[
        \frac{1}{2} \frac{\inp*{g^k}{(\nabla^2 f(x^k) - H^k) g^k}}{H \norm{g^k}}
        \le
        \frac{1}{2 H} \norm{\nabla^2 f(x^k) - H^k} \norm{g^k}.
    \]
    The latter gives
    \[
        \frac{1}{2} \frac{\inp*{g^k}{(\nabla^2 f(x^k) - H^k) g^k}}{H \norm{g^k}}
        \le
        \frac{1}{24 \sqrt{H}} \norm{g^k}^{3/2}
        +
        \frac{32}{3 H^{2}} \norm{\nabla^2 f(x^k) - H^k}^{3}.
    \]

    Also
    \begin{equation*}
        \|\nabla f(x^k)\|^{3/2} \le \sqrt{2}(\|g^k\|^{3/2} + \|g^k-\nabla f(x^k)\|^{3/2})
    \end{equation*}
    implying that
    \begin{equation*}
          -\|g^k\|^{3/2} \le -\frac{1}{\sqrt{2}} \|\nabla f(x^k)\|^{3/2} + \|g^k-\nabla f(x^k)\|^{3/2}
    \end{equation*}

    Putting the pieces together we obtain that
    \begin{equation*}
        f(x^{k+1})
        \le
        f(x^k)
        -\frac{1}{3\sqrt{H}} \|\nabla f(x^k)\|^{3/2}
        +\frac{1+4\sqrt{2}}{3\sqrt{2H}}\|g^k-\nabla f(x^k)\|^{3/2}
        +\frac{32}{3H^2} \norm{\nabla^2 f(x^k) - H^k}^{3}.
    \end{equation*}
    
    Taking expectation we have    
    \begin{equation*}
        \E f(x^{k+1})
        \le
        \E f(x^k)
        -\frac{1}{3\sqrt{H}} \E\sbr*{\|\nabla f(x^k)\|^{3/2}}
        +\frac{1+4\sqrt{2}}{3\sqrt{2H}}\sigma_g^{3/2}
        +\frac{32}{3H^2} \sigma_H^{3}.
    \end{equation*}

    \textbf{Case} $\frac{4\inp*{g^k}{H^k g^k}^2 }{\norm{g^k}^5} > \widehat H$. Then $M_k = \frac{\widehat{L}^2 }{\norm{g^k}}$.
    We assume that there exists a constatnt $L$ s.t.
    \begin{equation*}
        L_k \eqdef \frac{\inp*{g^k}{H^k g^k}}{\norm{g^k}^4} \le L,
    \end{equation*}
    then
    \begin{equation*}
        s^k = \frac{g^k}{\sqrt{M_k \norm{g^k}}}
        = \frac{g^k}{2 \widehat{L}}.
    \end{equation*}
    
    \begin{multline*}
        f(x^{k+1})
        \le
        f(x^k)
        - \frac1{2\widehat{L}}\inp{\nabla f(x^k)}{g^k}
        + \frac{1}{8\widehat{L}^2} \inp{g^k}{\nabla^2 f(x^k) g^k}
        + \frac{H}{24\widehat{L}^3}\norm{g^k}^3
        \\
        \leq
        f(x^k)
        - \frac1{2\widehat{L}}\inp{\nabla f(x^k)}{g^k}
        + \frac{1}{8\widehat{L}^2} \inp{g^k}{(\nabla^2 f(x^k) \pm H^k) g^k}
        + \frac{H}{24\widehat{L}^3}\norm{g^k}^3
        \\
        =
        f(x^k)
        - \frac1{8\widehat{L}}\inp{\nabla f(x^k)}{g^k}
        + \frac{L_k}{8\widehat{L}^2}\norm{g^k}^2
        + \frac{H}{24\widehat{L}^3}\norm{g^k}^3
        \\
        + \frac{1}{8\widehat{L}^2} \inp{g^k}{(\nabla^2 f(x^k) - H^k) g^k}
    \end{multline*}

    Next we use that
    \[
        \frac{1}{8\widehat{L}^2}\langle g^k,(\nabla^2 f(x^k)-H^k)g^k\rangle
        \le
        \frac{1}{8\widehat{L}^2}\|g^k\|^2\|\nabla^2 f(x^k)-H^k\|,
    \]
    apply Young's ($p=3, q=3/2$):
    \[
        ab \le \frac{\alpha^3}{3}a^3 + \frac{2}{3\alpha^{3/2}}b^{3/2}.
    \]
    with
    $a = \|\nabla^2 f(x^k)-H^k\|$, $b = \frac{\|g^k\|^2}{2\widehat{L}^2}$, $\alpha = 2^{-1/3} H^{-2/3}$ and obtain that
    \[
        \frac{1}{8\widehat{L}^2}\|g^k\|^2\|\nabla^2 f(x^k)-H^k\|
        \le
        \frac{1}{6H^2}\|\nabla^2 f(x^k)-H^k\|^3
        +\frac{H}{24\widehat{L}^3}\|g^k\|^3.
    \]
    
    Thus we have
    \begin{multline*}
        f(x^{k+1})
        \le
        f(x^k)
        - \frac1{2 \widehat{L}}\inp{\nabla f(x^k)}{g^k}
        + \frac{L_k}{\widehat{L}^2}\norm{g^k}^2
        + \frac{2H}{24\widehat{L}^3}\norm{g^k}^3
        \\
        +\frac{1}{6H^2}\|\nabla^2 f(x^k)-H^k\|^3
        \\
        \le
        f(x^k)
        - \frac1{2 \widehat{L}}\inp{\nabla f(x^k)}{g^k}
        + \frac{L_k}{8\widehat{L}^2}\norm{g^k}^2
        + \frac{8L_k^2}{24\widehat{L}^3}\norm{g^k}^2
        \\
        +\frac{1}{6H^2}\|\nabla^2 f(x^k)-H^k\|^3
        \\
        \le
        f(x^k)
        - \frac1{2\widehat{L}}\inp{\nabla f(x^k)}{g^k}
        + \frac{L_k}{4\widehat{L}^2}\norm{\nabla f(x^k)}^2
        + \frac{2L_k^2}{3\widehat{L}^3}\norm{\nabla f(x^k)}^2
        \\
        + \frac{L_k}{4\widehat{L}^2}\norm{\nabla f(x^k) - g^k}^2
        + \frac{2L_k^2}{3\widehat{L}^3}\norm{\nabla f(x^k) - g^k}^2+\frac{1}{6H^2}\|\nabla^2 f(x^k)-H^k\|^3.
    \end{multline*}

    Take conditional expectation
    \[
        \E[f(x^{k+1})\mid x^k]
        \le f(x^k)
        -\Big(\frac{1}{2\widehat L}-\frac{L}{4\widehat L^2}-\frac{2L^2}{3\widehat L^3}\Big) \|\nabla f(x^k)\|^2
        +\Big(\frac{L}{4\widehat L^2}+\frac{2L^2}{3\widehat L^3}\Big) \sigma_g^2
        +\frac{\sigma_H^3}{6H^2}.
    \]

    Setting $\widehat{L} > 3L$
    \[
        \E[f(x^{k+1})\mid x^k]
        \le f(x^k)
        -\frac{1}{4\widehat L}\|\nabla f(x^k)\|^2
        +\Big(\frac{L}{4\widehat L^2}+\frac{2L^2}{3\widehat L^3}\Big) \sigma_g^2
        +\frac{\sigma_H^3}{6H^2}.
    \]
    
    And finally, the full expectation
    \[
        \E f(x^{k+1})
        \le \E f(x^k)
        -\frac{1}{4\widehat L}\E\|\nabla f(x^k)\|^2
        +\Big(\frac{2L}{\widehat L^2}+\frac{2L^2}{3\widehat L^3}\Big) \sigma_g^2
        +\frac{\sigma_H^3}{6H^2}
    \]
    concludes the proof.
\end{proof}

\end{document}